\documentclass[12pt]{article}
\usepackage{amsthm, amsmath, amssymb, enumerate}
\usepackage{fullpage}
\usepackage[colorlinks=true]{hyperref}
\usepackage[all]{xy}
\usepackage{fancyhdr}
\begin{document}
\title{Explicit Kodaira--Spencer maps over PEL Shimura varieties}
\author{Ziqi Guo}
\date{}
\maketitle

\theoremstyle{plain}
\newtheorem{thm}{Theorem}[section]
\newtheorem{theorem}[thm]{Theorem}
\newtheorem{cor}[thm]{Corollary}
\newtheorem{corollary}[thm]{Corollary}
\newtheorem{lem}[thm]{Lemma}
\newtheorem{lemma}[thm]{Lemma}
\newtheorem{pro}[thm]{Proposition}
\newtheorem{proposition}[thm]{Proposition}
\newtheorem{prop}[thm]{Proposition}
\newtheorem{definition}[thm]{Definition}
\newtheorem{assumption}[thm]{Assumption}

\theoremstyle{remark} 
\newtheorem{remark}[thm]{Remark}
\newtheorem{example}[thm]{Example}
\newtheorem{remarks}[thm]{Remarks}
\newtheorem{problem}[thm]{Problem}
\newtheorem{exercise}[thm]{Exercise}
\newtheorem{situation}[thm]{Situation}
\newtheorem{acknowledgment}[thm]{Acknowledgment}

\numberwithin{equation}{subsection}

\newcommand{\ZZ}{\mathbb{Z}}
\newcommand{\CC}{\mathbb{C}}
\newcommand{\QQ}{\mathbb{Q}}
\newcommand{\RR}{\mathbb{R}}
\newcommand{\HH}{\mathcal{H}}     

\newcommand{\ad}{\mathrm{ad}}            
\newcommand{\NT}{\mathrm{NT}}         
\newcommand{\nonsplit}{\mathrm{nonsplit}}         
\newcommand{\Pet}{\mathrm{Pet}}         
\newcommand{\Fal}{\mathrm{Fal}}         

\newcommand{\cs}{{\mathrm{cs}}}         

\newcommand{\ZZn}{\mathcal{O}_{E_0}[\frac{1}{m}]}         
\newcommand{\ZZN}{\mathbb{Z}[\frac{1}{N}]}     
\newcommand{\XK}{X_K}    
\newcommand{\XXK}{\mathcal{X}_K}    
\newcommand{\OA}{\underline{\Omega}_\mathcal{A}}
\newcommand{\OU}{\Omega_{\mathcal{X}_K/\mathcal{O}_{E_0}[\frac{1}{m}]}}
\newcommand{\WA}{\underline{\omega}_\mathcal{A}}
\newcommand{\WU}{\omega_{\mathcal{X}_K/\mathcal{O}_{E_0}[\frac{1}{m}]}}
\newcommand{\HHom}{\mathcal{H}\mathrm{om}}
\newcommand{\Hr}{\mathcal{H}_{\frac{r}{2},\frac{r}{2}}}
\newcommand{\OBr}{\mathcal{O}_B^\frac{r}{n}}

\newcommand{\pair}[1]{\langle {#1} \rangle}
\newcommand{\wpair}[1]{\left\{{#1}\right\}}
\newcommand{\wh}{\widehat}
\newcommand{\wt}{\widetilde}

\newcommand\Spf{\mathrm{Spf}}

\newcommand{\lra}{{\longrightarrow}}

\newcommand{\matrices}[2]
{\left( \begin{array}{c}
  #1   \\
  #2   \\
 \end{array}\right)} 

\newcommand{\matrixx}[4]
{\left( \begin{array}{cc}
  #1 &  #2  \\
  #3 &  #4  \\
 \end{array}\right)}        


\newcommand{\BA}{{\mathbb {A}}}
\newcommand{\BB}{{\mathbb {B}}}
\newcommand{\BC}{{\mathbb {C}}}
\newcommand{\BD}{{\mathbb {D}}}
\newcommand{\BE}{{\mathbb {E}}}
\newcommand{\BF}{{\mathbb {F}}}
\newcommand{\BG}{{\mathbb {G}}}
\newcommand{\BH}{{\mathbb {H}}}
\newcommand{\BI}{{\mathbb {I}}}
\newcommand{\BJ}{{\mathbb {J}}}
\newcommand{\BK}{{\mathbb {K}}}
\newcommand{\BL}{{\mathbb {L}}}
\newcommand{\BM}{{\mathbb {M}}}
\newcommand{\BN}{{\mathbb {N}}}
\newcommand{\BO}{{\mathbb {O}}}
\newcommand{\BP}{{\mathbb {P}}}
\newcommand{\BQ}{{\mathbb {Q}}}
\newcommand{\BR}{{\mathbb {R}}}
\newcommand{\BS}{{\mathbb {S}}}
\newcommand{\BT}{{\mathbb {T}}}
\newcommand{\BU}{{\mathbb {U}}}
\newcommand{\BV}{{\mathbb {V}}}
\newcommand{\BW}{{\mathbb {W}}}
\newcommand{\BX}{{\mathbb {X}}}
\newcommand{\BY}{{\mathbb {Y}}}
\newcommand{\BZ}{{\mathbb {Z}}}

\newcommand{\CA}{{\mathcal {A}}}
\newcommand{\CB}{{\mathcal {B}}}
\newcommand{\CD}{{\mathcal{D}}}
\newcommand{\CE}{{\mathcal {E}}}
\newcommand{\CF}{{\mathcal {F}}}
\newcommand{\CG}{{\mathcal {G}}}
\newcommand{\CH}{{\mathcal {H}}}
\newcommand{\CI}{{\mathcal {I}}}
\newcommand{\CJ}{{\mathcal {J}}}
\newcommand{\CK}{{\mathcal {K}}}
\newcommand{\CL}{{\mathcal {L}}}
\newcommand{\CM}{{\mathcal {M}}}
\newcommand{\CN}{{\mathcal {N}}}
\newcommand{\CO}{{\mathcal {O}}}
\newcommand{\CP}{{\mathcal {P}}}
\newcommand{\CQ}{{\mathcal {Q}}}
\newcommand{\CR }{{\mathcal {R}}}
\newcommand{\CS}{{\mathcal {S}}}
\newcommand{\CT}{{\mathcal {T}}}
\newcommand{\CU}{{\mathcal {U}}}
\newcommand{\CV}{{\mathcal {V}}}
\newcommand{\CW}{{\mathcal {W}}}
\newcommand{\CX}{{\mathcal {X}}}
\newcommand{\CY}{{\mathcal {Y}}}
\newcommand{\CZ}{{\mathcal {Z}}}

\newcommand{\ab}{{\mathrm{ab}}}
\newcommand{\Ad}{{\mathrm{Ad}}}
\newcommand{\an}{{\mathrm{an}}}
\newcommand{\Aut}{{\mathrm{Aut}}}

\newcommand{\Br}{{\mathrm{Br}}}
\newcommand{\bs}{\backslash}
\newcommand{\bbs}{\|\cdot\|}

\newcommand{\Ch}{{\mathrm{Ch}}}
\newcommand{\cod}{{\mathrm{cod}}}
\newcommand{\cont}{{\mathrm{cont}}}
\newcommand{\cl}{{\mathrm{cl}}}
\newcommand{\criso}{{\mathrm{criso}}}
\newcommand{\de}{{\mathrm{d}}}
\newcommand{\dR}{{\mathrm{dR}}}
\newcommand{\df}{\mathrm{det}^*}
\newcommand{\disc}{{\mathrm{disc}}}
\newcommand{\Disc}{\mathrm{Disc}}
\newcommand{\Div}{{\mathrm{Div}}}
\renewcommand{\div}{{\mathrm{div}}}

\newcommand{\Eis}{{\mathrm{Eis}}}
\newcommand{\End}{{\mathrm{End}}}

\newcommand{\Frob}{{\mathrm{Frob}}}

\newcommand{\Gal}{{\mathrm{Gal}}}
\newcommand{\GL}{{\mathrm{GL}}}
\newcommand{\GO}{{\mathrm{GO}}}
\newcommand{\GSO}{{\mathrm{GSO}}}
\newcommand{\GSp}{{\mathrm{GSp}}}
\newcommand{\GSpin}{{\mathrm{GSpin}}}
\newcommand{\GU}{{\mathrm{GU}}}
\newcommand{\BGU}{{\mathbb{GU}}}

\newcommand{\Hom}{{\mathrm{Hom}}}
\newcommand{\Hol}{{\mathrm{Hol}}}
\newcommand{\HC}{{\mathrm{HC}}}
\newcommand{\Herm}{\mathrm{Herm}}
\newcommand{\id}{\mathrm{id}}
\newcommand{\Img}{{\mathrm{Im}}}
\newcommand{\Imh}{\mathrm{Imh}}
\newcommand{\Ind}{{\mathrm{Ind}}}
\newcommand{\inv}{{\mathrm{inv}}}
\newcommand{\Isom}{{\mathrm{Isom}}}

\newcommand{\Jac}{{\mathrm{Jac}}}
\newcommand{\JL}{{\mathrm{JL}}}

\newcommand{\Ker}{{\mathrm{Ker}}}
\newcommand{\KS}{{\mathrm{KS}}}

\newcommand{\Lie}{{\mathrm{Lie}}}

\newcommand{\M}{\mathrm{M}}

\newcommand{\new}{{\mathrm{new}}}
\newcommand{\Nm}{\mathrm{Nm}}
\newcommand{\NS}{{\mathrm{NS}}}

\newcommand{\ord}{{\mathrm{ord}}}
\newcommand{\ol}{\overline}
\newcommand{\otf}{\otimes^*}
\newcommand{\rank}{{\mathrm{rank}}}

\newcommand{\PGL}{{\mathrm{PGL}}}
\newcommand{\PSL}{{\mathrm{PSL}}}
\newcommand{\Pic}{\mathrm{Pic}}
\newcommand{\Prep}{\mathrm{Prep}}
\newcommand{\Proj}{\mathrm{Proj}}

\newcommand{\Picc}{\mathcal{P}ic}

\renewcommand{\Re}{{\mathrm{Re}}}
\newcommand{\Reh}{\mathrm{Reh}}
\newcommand{\Res}{{\mathrm{Res}}}
\newcommand{\red}{{\mathrm{red}}}
\newcommand{\reg}{{\mathrm{reg}}}
\newcommand{\sm}{{\mathrm{sm}}}
\newcommand{\sing}{{\mathrm{sing}}}
\newcommand{\SL}{\mathrm{SL}}
\newcommand{\Sp}{\mathrm{Sp}}
\newcommand{\Sym}{{\mathrm{Sym}}}

\newcommand{\tor}{{\mathrm{tor}}}
\newcommand{\tr}{{\mathrm{tr}}}

\newcommand{\ur}{{\mathrm{ur}}}
\newcommand{\U}{\mathrm{U}}

\newcommand{\vol}{{\mathrm{vol}}}

\newcommand{\ds}{\displaystyle}
\newcommand\keywords[1]{\textbf{Keywords}: #1}

\begin{abstract}
   The goal of our work is to construct a class of morphisms between two canonical line bundles on integral models of PEL Shimura varieties via Kodaira--Spencer maps, and explicitly compute such morphisms and their effects on the canonical metrics of line bundles. This result provides a concrete method for comparing two canonical line bundles and the corresponding arithmetic intersection numbers. In particular, it allows us to give an explicit relationship between the height functions defined by these two line bundles.

   2010 Mathematics Subject Classification numbers: 11G18, 14G35.
\end{abstract}
\keywords{Abelian variety, Kodaira-Spencer map, PEL Shimura variety.}

\tableofcontents

\section{Introduction}
The goal of this paper is to explicitly compute the Kodaira--Spencer map over a PEL Shimura variety and its effect on the metrics of line bundles. Roughly speaking, our main work is to construct a canonical morphism between two canonical line bundles on an integral model of a PEL Shimura variety using the classical Kodaira--Spencer map, and to give an explicit formula of this map. Our result contains two parts: one is the computation of images over integral models, the other is the comparison of metrics in complex setting. The definition of the Kodaira--Spencer map is canonical and easy to understand, but it turns out to be difficult to compute the explicit constants since there are various abstract identifications. 

Using the main theorem of this paper, we explicitly compute the difference between the modular height formulas on PEL Shimura varieties associated with these two line bundles. This paper generalizes the results in \cite{Yuan2} and \cite{Guo1}. In \cite{Yuan2}, an explicit formula for the Kodaira–Spencer map is given for quaternionic Shimura curves over $\QQ$, while \cite{Guo1} extends these results to Hilbert–Siegel modular varieties and twisted Hilbert modular varieties.

Let $(B,*,V,\langle\cdot,\cdot\rangle,h)$ be a global PEL datum, which we refer to Definition \ref{PEL datum} for precise definition. Let $G/\QQ$ be the algebraic group determined by the functor
\begin{equation*}
    R\longmapsto\{x\in\End_{B\otimes R}(V\otimes R)|xx^*\in R^\times\},
\end{equation*}
and $X$ be the $G(\RR)$-conjugacy class of $h$. Then the pair $(G,X)$ is a Shimura datum, and for a compact open subgroup $K\subset G(\BA_f)$, the Shimura variety is defined to be
\begin{equation*}
    X_K:=G(\QQ)\backslash X\times G(\BA_f)/K.
\end{equation*}
Denote by $E_0$ the reflex field of $X_K$.

Moreover, let $(\mathcal{O},*,L,\langle\cdot,\cdot\rangle,h)$ be a PEL type $\mathcal{O}$-lattice defined in Definition \ref{PEL type O-lattice}, which can be understood as an integral version of a PEL datum. Let $m$ be the product of primes $p$ such that $K_p\subset G(\QQ_p)$ is not maximal. Since the term ``maximal” is somewhat ambiguous in general, we refer to Remark \ref{maximal remark} for its precise meaning in this context. Under certain assumptions, there is an integral model $\mathcal{X}_K$ over $\mathcal{O}_{E_0}[\frac{1}{m}]$, which is flat, semistable, smooth outside finitely many primes and whose non-smooth locus has codimension at least 2. For any $\mathcal{O}_{E_0}[\frac{1}{m}]$-scheme $S$, $\mathcal{X}_K(S)$ is the category of tuples $(A,\lambda,i,\eta_K)$, where
\begin{enumerate}
    \item $A$ is an abelian scheme of dimension $\frac{1}{2}\dim_\QQ V$ over $S$;
    \item $\lambda$ is a fixed polarization of $A$, such that $\mathrm{Ros}_\lambda(i(a))=i(a^*)$, where $\mathrm{Ros}$ is the Rosati involution;
    \item $i:\mathcal{O}\rightarrow\End_S(A)$ defines an $\mathcal{O}$-structure of $(A,\lambda)$ satisfying the determinant condition in \cite{Kot};
    \item $\eta_K$ is an integral $K$-level structure of type $(L\otimes_\ZZ \BA_f,\langle\cdot,\cdot\rangle)$. 
\end{enumerate}

Denote by $\pi:\mathcal{A}\rightarrow\XXK$ the universal abelian scheme and $\epsilon:\mathcal{X}_K\rightarrow\mathcal{A}$ the zero section in both cases. Then we can define relative differential sheaves $\Omega_{\mathcal{A}/\mathcal{O}_{E_0}[\frac{1}{m}]}$, $\Omega_{\mathcal{A}/\mathcal{X}_K}$ and $\Omega_{\mathcal{X}_K/\mathcal{O}_{E_0}[\frac{1}{m}]}$, the relative dualizing sheaves $\omega_{\mathcal{A}/\mathcal{X}_K}$ and $\omega_{\mathcal{X}_K/\mathcal{O}_{E_0}[\frac{1}{m}]}$, and the relative tangent sheaves $\mathcal{T}_{\mathcal{X}_K/\mathcal{O}_{E_0}[\frac{1}{m}]}$ and
$\mathcal{T}_{\mathcal{A}/\mathcal{X}_K}$. Furthermore, over $\mathcal{X}_K$ we define the Lie algebra
\begin{equation*}
    \Lie(\mathcal{A}):=\epsilon^*\mathcal{T}_{\mathcal{A}/\mathcal{X}_K}\cong \pi_*\mathcal{T}_{\mathcal{A}/\mathcal{X}_K},
\end{equation*}
and the Hodge bundles
\begin{equation*}
    \underline{\Omega}_{\mathcal{A}}:=\epsilon^*\Omega_{\mathcal{A}/\mathcal{X}_K}\cong \pi_*\Omega_{\mathcal{A}/\mathcal{X}_K},\quad \underline{\omega}_{\mathcal{A}}:=\epsilon^*\omega_{\mathcal{A}/\mathcal{X}_K}\cong \pi_*\omega_{\mathcal{A}/\mathcal{X}_K}.
\end{equation*}
Moreover, each of the Hodge bundle $\underline{\omega}_{\mathcal{A}}$ and the dualizing sheaf $\omega_{\mathcal{X}_K/\mathcal{O}_{E_0}[\frac{1}{m}]}$ is each equipped with a canonical metric, referred to as the Faltings metric $\lVert\cdot\rVert_{\Fal}$ and the Petersson metric $\lVert\cdot\rVert_{\Pet}$, respectively. We will introduce them later.

Finally, we introduce the Kodaira--Spencer map. Consider the exact sequence
\begin{equation*}
    0\longrightarrow\pi^*\Omega_{\mathcal{X}_K/\mathcal{O}_{E_0}[\frac{1}{m}]}\longrightarrow\Omega_{\mathcal{A}/\mathcal{O}_{E_0}[\frac{1}{m}]}\longrightarrow\Omega_{\mathcal{A}/\mathcal{X}_K}\longrightarrow 0.
\end{equation*}
Apply the derived functor of $\pi_*$. There is a connecting morphism
\begin{equation*}
    \phi_0:\pi_*\Omega_{\mathcal{A}/\mathcal{X}_K}\longrightarrow R^1\pi_*(\pi^*\Omega_{\mathcal{X}_K/\mathcal{O}_{E_0}[\frac{1}{m}]}).
\end{equation*}
We call this the Kodaira--Spencer map. Our goal is to construct a canonical morphism between two line bundles $\underline{\omega}_{\mathcal{A}}$ and $\omega_{\mathcal{X}_K/\mathcal{O}_{E_0}[\frac{1}{m}]}$ via this Kodaira--Spencer map, and to provide an explicit expression for this morphism.

For the convenience of stating our main theorem, we introduce the following assumptions and notations. Suppose $F$ is a number field, $B$ is a central division algebra over a field $F$ of degree $n$, and the elements in $F$ invariant under $*$ form a totally real field $F^+$. It is important to note that a general $B$ in PEL datum can always be written as
\begin{equation*}
    B\cong\prod_{i=1}^s\M_{n_i}(D_i),\ n_i\in\ZZ^+,
\end{equation*}
where $D_i$ is a central division algebra over a field $F_i$. Moreover, by Morita equivalence, we can always reduce the case of $M_{n_i}(D_i)$ to the case of $D_i$, so this assumption entails no loss of generality. See Remark \ref{Morita equivalence remark} for more details. Suppose $g=[F^+:\QQ]$. According to Albert’s classification theorem for division algebras with positive involution, we can divide $B$ into three types, namely type A, C and D in Definition \ref{type of PEL Shimura variety}. In this paper, we will consider the cases of type A and type C separately. 

In the case of type C, $F=F^+$ and $B_\tau\cong\M_n(\RR)$ for any archimedean place $\tau:F\rightarrow\RR$; in the case of type A, $F/F^+$ is a totally imaginary quadratic extension and $B_\tau\cong\M_n(\CC)$ for any archimedean place $\tau:F\rightarrow\CC$. Suppose $g=[F^+:\QQ]$, and suppose the symplectic $B$-module $V$ has dimension $2rn$ over $F^+$. Under these notations, the universal abelian scheme has relative dimension $l=rng$. Let $d_B$ be an ideal of $\mathcal{O}_F$ such that
\begin{equation*}
    d_B=\prod_{v\ \nonsplit\ \mathrm{in}\ B}v^{n_v}.
\end{equation*}
Here $v$ denotes a finite place of $F$, while we say $v$ is nonsplit (or ramified) in $B/F$ if 
\begin{equation*}
    B_v:=B\otimes_F F_v\cong\M_{n_v}(D_v)
\end{equation*}
for some division algebra $D_v\ne F_v$ and positive integer $1\le n_v<n$. Note that in both case, $d_B$ is always an ideal in $F^+$, and we denote by $\Nm(d_B)=\Nm_{F^+/\QQ}(d_B)$.

For the choice of the order $\mathcal{O}\subset B$ and the $\mathcal{O}$-lattice $L\subset V$, assume that $\mathcal{O}$ is maximal, and $L$ is free and self-dual under the alternating pairing $\langle\cdot,\cdot\rangle$. This makes the polarization $\lambda$ principal in the moduli interpretation. However, we should note that this assumption is not essential; it is made only to simplify the statement of the theorem. In fact, as long as the integral model satisfies the properties mentioned above, we can choose a different polarization, and the main theorem will adjust accordingly; see the Remark \ref{polarization remark} for details.

For any point $x\in\mathcal{X}_K(\mathbb{C})$, let $\alpha\in\underline{\omega}_{\mathcal{A}}(x)\cong\Gamma(\mathcal{A}_x,\omega_{\mathcal{A}_x/\mathbb{C}})$, where $\mathcal{A}_x$ is the fiber of $\mathcal{A}$ above $x$ which is a complex abelian variety of dimension $l$, then $\alpha$ is a holomorphic $l$-form. The Faltings metric is defined by
\begin{equation*}
    \lVert\alpha\rVert_{\Fal}^2:=\frac{1}{(2\pi)^l}\left|\int_{\mathcal{A}_x(\mathbb{C})}\alpha\wedge\bar{\alpha}\right|.
\end{equation*}

The definition of the Petersson metric is more complicated, since it depends on the type of $B$. When $B$ is of type C, the Hermitian symmetric domain $X\cong\mathcal{H}_r^g$, where $\mathcal{H}_r$ is the Siegel upper half-space. Let $(Z_1,\cdots,Z_g)$ be the coordinate of $\HH_r^g$, where each $Z_i$ is a symmetric complex matrix with positive definite imaginary part $Y_i$. Denote by $\de\tau_i:=\wedge^{j\ge k} \de Z_{i,jk}$, where $Z_{i,jk}$ is the $(j,k)$-element in matrix $Z_i$ for $1\le i\le g$. The Petersson metric is defined by 
\begin{equation*}
    \lVert\de\tau\rVert_{\Pet}:=2^{\frac{gr(r+1)}{2}}\prod_{i=1}^g\det(Y_i)^{\frac{r+1}{2}}.
\end{equation*}
When $B$ is of type $A$, the Hermitian symmetric domain $X$ is a product of $\mathcal{D}_{p_i,q_i}$, where $\mathcal{D}_{p,q}$ is the Unitary symmetric domain defined in Definition \ref{Unitary symmetric domain} and $p_i+q_i=r$ denotes the signature of $V$ at each archimedean place for $1\le i\le g$. We only consider the case when $p_i=q_i=\frac{r}{2}$, as we will see in Section \ref{Kodaira-Spencer map section}, if the signature of $V$ does not satisfy this condition, then there does not exist a canonical morphism between any power of these two line bundles. Then in this case $\mathcal{D}_{p_i,q_i}\cong\Hr$, where $\Hr$ is the Hermitian upper half-space defined in Definition \ref{Hermitian upper half-space}. Suppose $(Z_1,\cdots,Z_g)$ is the coordinate of $\prod_{i=1}^g\mathcal{H}_{\frac{r}{2},\frac{r}{2}}$. Denote by $\de\tau_i:=\wedge^{1\le j,k\le \frac{r}{2}} \de Z_{i,jk}$, and the Petersson metric is defined by 
\begin{equation*}
    \lVert\de\tau\rVert_{\Pet}:=2^\frac{gr^2}{4}\prod_{i=1}^g\det(Y_i)^{\frac{r}{2}},
\end{equation*} 
where $Y_i=\Imh(Z_i)$ is the positive Hermitian imaginary part of $Z_i$.

Now we present the main results of this paper in our two cases.
\begin{theorem}[Type C]\label{Intro main1}
Suppose that $B$ is of type C. There is a canonical injection
\begin{equation}
    \psi:\underline{\omega}_\mathcal{A}^{\otimes r+1}\longrightarrow\omega^{\otimes n}_{\mathcal{X}_K/\mathcal{O}_{E_0}[\frac{1}{m}]},
\end{equation}    
and its image is the subsheaf $\Nm(d_B)^{\frac{r(r+1)}{2}}\WU^{\otimes n}$ of $\WU^{\otimes n}$. Moreover, under $\psi$, we have $\lVert\cdot\rVert_{\Fal}^{r+1}=\lVert\cdot\rVert^n_{\Pet}$.
\end{theorem}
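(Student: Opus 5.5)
We will construct $\psi$ from the Kodaira--Spencer map and then compute it locally. First, $R^1\pi_*\pi^*\Omega_{\XXK/\ZZn}\cong R^1\pi_*\mathcal{O}_\mathcal{A}\otimes\Omega_{\XXK/\ZZn}$, and the polarization $\lambda$ (principal by our assumption on $L$) identifies $R^1\pi_*\mathcal{O}_\mathcal{A}\cong\Lie(\mathcal{A}^\vee)\cong\Lie(\mathcal{A})\cong\OA^\vee$. Hence $\phi_0$ gives a map $\OA\otimes\OA\to\Omega_{\XXK/\ZZn}$.

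The key structural input is the $\mathcal{O}$-action. Because $\mathrm{Ros}_\lambda(i(a))=i(a^*)$, this map factors through the $\mathcal{O}$-balanced, symmetric part. Concretely, $\OA$ is a locally free $\mathcal{O}\otimes\mathcal{O}_{\XXK}$-module. After an étale (or fppf) base change where $\mathcal{O}\otimes\mathcal{O}_{\XXK}$ becomes $\prod_{\tau}\M_n$ (at primes unramified in $B$), Morita equivalence writes $\OA\cong\bigoplus_{\tau}(\mathcal{W}_\tau)^{\oplus n}$ with each $\mathcal{W}_\tau$ of rank $r$. Kodaira--Spencer then becomes an isomorphism $\bigoplus_\tau\Sym^2\mathcal{W}_\tau\cong\Omega_{\XXK/\ZZn}$, as in the Siegel case. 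This is the standard deformation-theoretic statement that the tangent space of a PEL moduli problem is $\Hom$ of the Hodge filtration that is symmetric and $\mathcal{O}$-linear. Taking determinants, and using $\det\Sym^2\mathcal{W}=(\det\mathcal{W})^{r+1}$ together with $\WA=\det\OA=\bigotimes_\tau(\det\mathcal{W}_\tau)^{n}$, we get
\[
\underline{\omega}_\mathcal{A}^{\otimes(r+1)}=\bigotimes_\tau(\det\mathcal{W}_\tau)^{n(r+1)}\cong\Big(\bigotimes_\tau\det\Sym^2\mathcal{W}_\tau\Big)^{\otimes n}\cong\WU^{\otimes n}.
\]
This defines $\psi$ canonically. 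The $n$-th power is needed because the Morita decomposition is only defined up to the choice of idempotents, while the $n$-fold tensor is intrinsic; equivalently, one uses the reduced-norm/determinant of $\mathcal{O}\otimes\mathcal{O}_{\XXK}$-modules. Since everything is defined globally over $\XXK$ and the non-smooth locus has codimension $\ge2$, $\psi$ extends from the smooth locus by reflexivity, and it is injective because it is an isomorphism generically.

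For the image, we work prime by prime. At $v\nmid d_B$, the above Morita argument holds integrally, so $\psi$ is an isomorphism locally at $v$. At $v\mid d_B$, $\mathcal{O}_v$ is a maximal order in $\M_{n_v}(D_v)$, and $\mathcal{O}_v\otimes\mathcal{O}_{\XXK}$ is no longer a matrix algebra. We would compute the discrepancy by comparing the reduced-norm determinant of $\OA$ with the honest Morita determinant after extending scalars to the unramified splitting field of $D_v$. The discriminant of the maximal order contributes a factor $v^{n_v}$ per copy of $\det\mathcal{W}$ in each symmetric-square determinant, and the exponent $\frac{r(r+1)}2=\rank\Sym^2\mathcal{W}$ appears. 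I expect this local computation to be the main obstacle. It requires an explicit local model of $\OA$ for ramified $B_v$, perhaps via Dieudonné theory, or via the semistable local model when the integral model is not smooth there. The goal is to show that the cokernel is exactly $\Nm(d_B)^{r(r+1)/2}$.

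For the metric, we work over $\CC$ on $\HH_r^g$. The fiber over $Z=(Z_i)$ is $\CC^{l}/\Lambda_Z$ with $\Lambda_Z$ built from $\mathcal{O}$ and $(Z_i,1)$. We take the basis $\de z$ of $\OA$ adapted to the $n$ copies of each $\mathcal{W}_\tau$. Then $\lVert\wedge\de z\rVert_{\Fal}^2$ is computed as $(2\pi)^{-l}$ times the covolume, which is $\prod_i\det(2Y_i)^{n}$ up to the lattice index. The classical Siegel computation gives $\phi_0(\de z_j\otimes\de z_k)=2\pi i\,\de Z_{jk}$ up to symmetrization factors. Raising to the $(r+1)$-st and $n$-th powers, the constants $2^{gr(r+1)/2}$ and the powers of $2\pi$ cancel, which gives $\lVert\cdot\rVert_{\Fal}^{r+1}=\lVert\cdot\rVert_{\Pet}^n$. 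The lattice-index factors are exactly those absorbed into the $\Nm(d_B)$ discrepancy, and they are invisible at archimedean places.
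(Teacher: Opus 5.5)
Your construction of $\psi$ on the generic fibre is essentially the paper's: Lan's presentation of $\Omega$ on $\XXK^{\sm}$ as $(\OA\otimes\OA)/\mathcal{R}$, then Morita, then $\det\Sym^2\mathcal{W}\cong(\det\mathcal{W})^{\otimes r+1}$. The integral part of the statement, however, is not proved.

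At $v\mid d_B$ the algebra $\mathcal{O}\otimes\mathcal{O}_{\XXK}$ does not become a matrix algebra even after \'etale base change; it only becomes the hereditary order of Lemma \ref{base change of maximal order}. So your Morita description of $\psi$ is unavailable on the smooth part of the fibre over $v$. That fibre is a divisor, so extending across the codimension-$2$ singular locus does not help. What you actually have is a map defined away from $d_B$, i.e.\ a rational map, and its regularity and exact order along $v$ are precisely what the theorem asserts. Similarly, at primes ramified in $F$ the splitting into $\prod_\tau$ fails, and you need the determinant over $\mathcal{O}_{F^+}\otimes\mathcal{O}_{\XXK}$ from Lemma \ref{Key lemma}. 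You defer this computation, and the heuristic ``a factor $v^{n_v}$ per copy of $\det\mathcal{W}$'' counts multiples of $r+1$, so it does not produce $\frac{r(r+1)}{2}$.

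The paper's route is a direct computation of $\mathcal{R}$:
\begin{itemize}
\item In type C, Albert's classification forces $n\le 2$, so $B_v=(E_v/F_v,\tau,\pi)$ with $E_v/F_v$ unramified and $u^2=\pi$.
\item After base change to $\mathcal{O}_{E_v}$, split $\OA$ and $\underline{\Omega}_{\mathcal{A}^t}$ into $\mathcal{O}_{E_v}$-eigenspaces with bases $x_j,y_j$ and $x'_j,y'_j$ on which $ux_j=\pi y_j$ and $uy_j=x_j$.
\item An $a\in\mathcal{O}_{E_v}$ with $a-\tau(a)$ a unit kills the mixed tensors.
\item The $u$-relation gives $x_j\otimes x'_k=\pi\,y_j\otimes y'_k$.
\end{itemize}
After taking determinants, each of the $\frac{r(r+1)}{2}$ generators then contributes exactly one factor of $\pi$ (Example \ref{type C example}, Proposition \ref{explicit basis of R}).

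If you follow this route for $r\ge 2$, you must also justify that normal form at the generic points of the fibre. Lemma \ref{explicit basis lemma} gets it from Morita equivalence, but the hereditary order is not Morita equivalent to $\mathcal{O}_{E_v}$. Already $\mathcal{O}_{B_v}\otimes\mathcal{O}_{E_v}$, as a module over itself, has two columns on which $u$ has different elementary divisors. A pair $(j,k)$ of such mixed type does not contribute exactly one factor of $\pi$.

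Your archimedean argument also contains a genuine error. With $\Lambda_Z$ built from $\mathcal{O}$ and $(Z_i,1)$, the covolume of $\Lambda_Z$ is $\prod_i\det(Y_i)^n$ times a covolume of $\mathcal{O}$, which involves $\Disc(\mathcal{O})$ and $d_F$. This factor enters $\lVert\cdot\rVert_{\Fal}$ directly, while $\lVert\cdot\rVert_{\Pet}$ never sees it. Since $\lVert\cdot\rVert_{\Fal}^{r+1}=\lVert\cdot\rVert_{\Pet}^n$ is an exact pointwise identity on $\XXK(\CC)$, nothing can be ``absorbed into $\Nm(d_B)$''.

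What cancels this factor is the principal polarization. Because the trace pairing is not self-dual, the Riemann form of $\lambda$ must be normalized as in \eqref{general Riemann form} by a matrix $\mu$ whose determinant matches that covolume. When the \v{C}ech cocycle $\de z'-\de z=\sum\beta\,\de Z$ is converted into $\Lie(\mathcal{A}_Z)$ via Lemma \ref{app}, the Kodaira--Spencer map acquires the factor $\mu/(2\pi i)$ (Theorem \ref{Explicit archimedean} and its type C analogue), not the bare Siegel constant.

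The constant is also $(2\pi i)^{-1}$, not $2\pi i$. Already for $n=g=r=1$, with $Y=\Img Z$, one has $\lVert\de z\rVert_{\Fal}^2=Y/\pi$ and $\lVert\de Z\rVert_{\Pet}=2Y$. So $\psi((\de z)^{\otimes 2})$ must be $\de Z$ times a constant of absolute value $\frac{1}{2\pi}$, and with $2\pi i$ the powers of $2\pi$ do not cancel.
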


\begin{theorem}[Type A]\label{Intro main2}
Suppose that $B$ is of type A, and the signature of $V$ at each archimedean place of $F$ is $(\frac{r}{2},\frac{r}{2})$. There is a canonical injection
\begin{equation}
    \psi:\underline{\omega}_\mathcal{A}^{\otimes \frac{r}{2}}\longrightarrow\omega_{\mathcal{X}_K/\mathcal{O}_{E_0}[\frac{1}{m}]}^{\otimes n},
\end{equation}    
and its image is the subsheaf $\Nm(d_B)^{\frac{r^2}{4}}\WU^{\otimes n}$ of $\WU^{\otimes n}$. Moreover, under $\psi$, we have $\lVert\cdot\rVert_{\Fal}^{\frac{r}{2}}=\lVert\cdot\rVert_{\Pet}^n$. 
\end{theorem}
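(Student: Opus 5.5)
The plan mirrors the type C case, but uses the unitary decomposition of the Hodge bundle. First I construct $\psi$ algebraically. The Kodaira--Spencer map $\phi_0$, combined with $R^1\pi_*\mathcal{O}_{\mathcal{A}}\cong\Lie(\mathcal{A}^\vee)$ and the principal polarization $\lambda$, gives a morphism
\[
\underline{\Omega}_\mathcal{A}\otimes\underline{\Omega}_\mathcal{A}\lra\OU .
\]
This map factors through the $\mathcal{O}$-balanced tensor product, $*$-twisted by the Rosati involution.

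After base change to an étale cover of $\mathcal{O}_{E_0}[\frac{1}{m}]$ that splits $\mathcal{O}_F$ and $\mathcal{O}_B$ away from $d_B$, the action of $\mathcal{O}_F\otimes\mathcal{O}$ decomposes things as follows.
\begin{itemize}
\item $\underline{\Omega}_\mathcal{A}=\bigoplus_{\tau}\left(\underline{\Omega}_{\tau}\oplus\underline{\Omega}_{\bar\tau}\right)$, where $\tau$ runs over the $g$ embeddings of $F^+$ extended to $F$.
\item By Morita equivalence, each summand is $\mathcal{O}^n\otimes\mathcal{E}_\tau$, where $\mathcal{E}_\tau$ and $\mathcal{E}_{\bar\tau}$ have rank $\frac{r}{2}$ by the signature $(\frac{r}{2},\frac{r}{2})$ and the determinant condition of \cite{Kot}.
\end{itemize}

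The Kodaira--Spencer isomorphism for the smooth locus of $\mathcal{X}_K$ (the standard deformation-theoretic statement) then gives
\[
\OU\cong\bigoplus_\tau\mathcal{E}_\tau\otimes\mathcal{E}_{\bar\tau}.
\]
Taking determinants, $\WU\cong\bigotimes_\tau(\det\mathcal{E}_\tau)^{\frac r2}(\det\mathcal{E}_{\bar\tau})^{\frac r2}$. Since $\underline{\omega}_\mathcal{A}=\bigotimes_\tau(\det\mathcal{E}_\tau\det\mathcal{E}_{\bar\tau})^{n}$, this identifies $\underline{\omega}_\mathcal{A}^{\otimes\frac r2}$ with $\WU^{\otimes n}$ at least generically.

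This explains the exponents $\frac{r}{2}$ and $n$. It also explains why an unequal signature obstructs a canonical map: the two determinants appear with different exponents. Because $\mathcal{X}_K$ is normal and its non-smooth locus has codimension $\ge 2$, the map defined on the smooth locus extends uniquely. This gives a canonical, Galois-descended injection $\psi$.

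Second, I compute the image locally at each prime $v\mid d_B$ using Kottwitz--Rapoport local models. At such $v$, $\mathcal{O}_{B_v}$ is a maximal order in $\M_{n_v}(D_v)$, and Morita equivalence reduces to $D_v$ of degree $d=n/n_v$ with uniformizer $\Pi$. The Hodge filtration then splits according to the $\Pi$-grading into $d$ pieces.

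The map $\underline{\Omega}\otimes\underline{\Omega}\to\OU$ is surjective only up to the factor introduced by pairing the graded pieces through $\Pi$. Using the standard local model coordinates, I compute the determinant of $\psi$ at the generic point of the special fiber and find the discriminant factor. Each of the $(\frac{r}{2})^2$ entries of the Hermitian deformation matrix acquires a factor of the local discriminant ideal, which totals $v^{n_v\cdot\frac{r^2}{4}}$. Taking the norm to $\QQ$ produces $\Nm(d_B)^{\frac{r^2}{4}}$.

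Third, I compare the metrics over $\CC$ by computing at a point $Z\in\prod\Hr$. The fiber $\mathcal{A}_x$ is $\CC^l/\Lambda_Z$, with an explicit period matrix built from $Z$ and $\mathcal{O}$. I choose the basis $\alpha=\bigwedge \de z$ of $\underline{\omega}_\mathcal{A}$ and compute
\[
\int\alpha\wedge\bar\alpha\;=\;\text{const}\cdot\prod_i\det(Y_i)^{n}\cdot\mathrm{covol}(\mathcal{O},L).
\]
The covolume is a discriminant factor that cancels against the $\Nm(d_B)$ from the second step, since $L$ is self-dual. Next I determine $\psi(\alpha^{\otimes\frac r2})=c\,(\de\tau)^{\otimes n}$ by differentiating the period matrix in $Z$ via Gauss--Manin, which amounts to the classical formula that $\phi_0$ sends $\de z_j\otimes\de z_k$ to $2\pi i\,\de Z_{jk}$ up to normalization. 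Finally I check that the powers of $2$ and $2\pi$ match $2^{\frac{gr^2}{4}}$ and $(2\pi)^l$. These steps give $\lVert\cdot\rVert_{\Fal}^{\frac r2}=\lVert\cdot\rVert_{\Pet}^n$.

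I expect the second step to be the main obstacle. Tracking exactly how the graded pieces of the Hodge filtration interact with $\Pi$ in the local model, and showing that the resulting index is precisely the stated power of $\Nm(d_B)$ rather than some other power, requires care with the self-dual lattice and with the determinant condition. I would try first to verify it in the case $r=2$, $n=2$, and then bootstrap by multiplicativity over embeddings.
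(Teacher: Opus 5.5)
Your first step is essentially the paper's construction: Theorem \ref{deformation} plus signature/Morita linear algebra, and $\OU\cong\bigoplus_\tau\mathcal{E}_\tau\otimes\mathcal{E}_{\bar\tau}$ is a cleaner form of Lemma \ref{algebraic lemma}. It only produces $\psi$ away from $d_B$, though. Regularity across the fibres at $v\mid d_B$ is not a codimension-$2$ extension, since those fibres are divisors, so it belongs to your second step. That step is where the content lies, and you only assert it.

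In the paper it is Proposition \ref{explicit basis of R}. After base change to the unramified splitting field $E_v$, for each pair $(j,k)$ of opposite signature the $n$ graded tensors $e_{ij}\otimes e'_{(n+2-i)k}$ coincide in the quotient, except that $e_{1j}\otimes e'_{1k}=\pi\,e_{2j}\otimes e'_{nk}$. Tensoring the $n$ pieces and taking determinants then gives one uniformizer per entry, hence $\pi^{r^2/4}$. Two consequences for your wording:
\begin{itemize}
\item The per-entry factor is a single uniformizer (after Morita, $v^{n_v}$, the $v$-part of $d_B$). It is not the discriminant ideal, which is $v^{n(n-1)}$ for division $B_v$ by Proposition \ref{two discriminant}.
\item $\underline{\Omega}_{\mathcal{A}}\otimes\underline{\Omega}_{\mathcal{A}}\to\OU$ is surjective on the smooth locus. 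The defect sits in the maps from the individual graded pieces.
\end{itemize}

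Moreover, the count depends on the $\Pi$-type of the Hodge filtration. The paper fixes this type by Lemma \ref{explicit basis lemma} and Corollary \ref{action of u corollary}: $\Pi$ is an isomorphism between consecutive graded pieces except at one step, where it is $\pi$. A genuine local-model computation must be done at the generic point of every irreducible component of the fibre at $v$, not at ``the'' generic point. For $r/2\ge 2$ these components have different $\Pi$-types. For instance, take $n=2$ and suppose $\Pi$ maps one graded piece of the $v$-part of $\underline{\Omega}_{\mathcal{A}}$ to the other with rank $r/2-m$. A direct entry-by-entry computation of $\mathrm{Hom}_{\mathcal{O}_{B_v}}(\omega,\Lie)$ then gives exponent $m^2+(r/2-m)^2$. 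This equals $r^2/4$ only for $m\in\{0,r/2\}$; for $r=4$, the component with $m=1$ gives $2$ instead of $4$. So ``each entry acquires a factor'' cannot simply be asserted; you would have to exclude or account for such components. The paper's argument fixes the module structure from the outset and does not address this either.

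The third step contains an actual error. The identity $\lVert\cdot\rVert_{\Fal}^{r/2}=\lVert\cdot\rVert_{\Pet}^{n}$ is checked pointwise over $\CC$. The covolume of $\Lambda_Z$ therefore cannot cancel against the finite-place factor $\Nm(d_B)^{r^2/4}$, which plays no role there. It must cancel against an archimedean constant inside $\psi$, and that constant is exactly what your ``classical formula up to normalization'' discards.

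The missing idea is the self-dual Riemann form. The principal polarization corresponds to $E_B$ in \eqref{general Riemann form}, which is twisted by $\mu^{-1}$; the naive trace form \eqref{Riemann form} is not self-dual. By Lemma \ref{app}, the identification $\Lie(\mathcal{A}_Z)\to\Lie(\mathcal{A}_Z^t)\to H^1(\mathcal{A}_Z,\mathcal{O})$ is $z\mapsto 2\pi i\,h(E_B(z,\cdot))$. This puts the entries $\mu_{li}/2\pi i$ into the Kodaira--Spencer map (Theorem \ref{Explicit archimedean}). Hence $\psi$ sends $(\wedge_{ij}\de z_{ij})^{\otimes r/2}$ to $(\det\mu/(2\pi i)^n)^{r^2/4}(\de\tau)^{\otimes n}$. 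This factor $(\det\mu)^{r^2/4}$ is what cancels the covolume in $\vol(\CC^{nr}/\Lambda_Z)=\det\mu^r\det(Y)^{2n}$.

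Note also that $\int\alpha\wedge\bar\alpha$ is proportional to $\prod_i\det(Y_i)^{2n}$, not $\prod_i\det(Y_i)^{n}$. With exponent $n$, the powers of $\det Y$ would not match $\lVert\de\tau\rVert_{\Pet}^n$.
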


When $g=r=1$ and $n=2$, the Shimura variety is a quaternionic Shimura curve over $\QQ$, and Theorem \ref{Intro main1} is consistent with the main theorem in \cite{Yuan2}; when $n=1$ (resp. $n=2$ and $r=1$), the Shimura variety is a Hilbert--Siegel modular variety (resp. twisted Hilbert modular variety), and Theorem \ref{Intro main1} is consistent with \cite[Thm 1.1]{Guo1} (resp. \cite[Thm 1.2]{Guo1}). Moreover, there are also some connections between these two main theorems. For instance, suppose $F$ is an imaginary quadratic field, $B_0$ is a quaternion algebra over $\QQ$ which defines a quaternionic Shimura curve over $\QQ$. Let $B=B_0\otimes_\QQ F$, this defines a quaternionic Shimura curve over $F$ with $n=r=2$ and $g=1$. Then Theorem \ref{Intro main1} for $B_0$ is compatible with Theorem \ref{Intro main2} for $B$.

As stated at the beginning of this section, the main application of the main theorem is a concrete method for comparing the arithmetic intersection numbers associated with the two line bundles. Assume $B$ is of type C. Suppose $X_K$ has dimension $d$ over the reflex field $E_0$, hence $\mathcal{X}_K$ has absolute dimension $d+1$, then we have
\begin{equation*}
    n\cdot\frac{\widehat{\deg}(\hat{c}_1(\WU,\lVert\cdot\rVert_\Pet)^{d+1})}{\deg(\omega_{\mathcal{X}_K,E_0}^d)}=
    (r+1)\cdot\frac{\widehat{\deg}(\hat{c}_1(\underline{\omega}_{\mathcal{A}},\lVert\cdot\rVert_{\Fal})^{d+1})}{\deg(\underline{\omega}_{\mathcal{A},E_0}^d)}+\frac{r(r+1)}{2}\log\Nm(d_B).
\end{equation*}
Here $\widehat{\deg}{(\hat{c}_1(\omega,\lVert\cdot\rVert)^{d+1})}$ means the  arithmetic self-intersection number of any hermitian line bundle $(\omega,\lVert\cdot\rVert)$ over $\XXK$, and $\deg(\omega_{E_0}^d)$ means the self-intersection number of the underlying line bundle $\omega_{E_0}$ over the generic fiber. Similarly, when $B$ is of type A, we have
\begin{equation*}
    n\cdot\frac{\widehat{\deg}(\hat{c}_1(\WU,\lVert\cdot\rVert_\Pet)^{d+1})}{\deg(\omega_{\mathcal{X}_K,E_0}^d)}=
    \frac{r}{2}\cdot\frac{\widehat{\deg}(\hat{c}_1(\underline{\omega}_{\mathcal{A}},\lVert\cdot\rVert_{\Fal})^{d+1})}{\deg(\underline{\omega}_{\mathcal{A},E_0}^d)}+\frac{r^2}{4}\log\Nm(d_B).
\end{equation*}
In the literature, this ratio is referred to as the modular height. Thus, our result provides an explicit comparison formula between the modular heights defined by the two line bundles.

We now list several concrete examples. In the case of type $C$, when $g=r=1$ and $n=1$, i.e., $X_K$ is a modular curve, this comparison formula shows that the main theorems in \cite{Yuan2} and \cite{Kuh} are consistent with each other; when $g=r=1$ and $n=2$, i.e., $X_K$ is a quaternionic Shimura curve, this shows that the main theorems in \cite{Yuan2} and \cite{KRY} are consistent with each other; when $n=r=1$ and $g=2$, i.e., $X_K$ is a Hilbert modular surface, this shows that \cite[(6.8)]{BKG} and \cite[0.17]{KRY} are consistent with each other. In the case of type $A$, when $g=n=1$ and $r=2$, i.e., $X_K$ is a unitary Shimura curve over an imaginary quadratic field $E$, up to $\bar{\QQ}\log p$ for those $p$ ramified in $E/\QQ$ and a multiple of 2, this comparison formula shows that the main theorems in \cite{Guo2} and \cite{BH} are consistent with each other. The difference of $\bar{\QQ}\log p$ arises because the two papers use different integral models, corresponding to the two types in \cite[Sec 6]{RSZ}; the difference given by the factor 2 comes from a slight variation in the choice of line bundle in \cite{Guo2}.

In general, these two line bundles have their respective advantages in specific applications. The line bundle $\omega_{\mathcal{X}_K/\mathcal{O}_{E_0}[\frac{1}{m}]}$ is more concrete and facilitates calculations, while $\underline{\omega}_\mathcal{A}$ maintains better functoriality in the context of mappings between Shimura varieties. Thus, it is beneficial to understand the explicit relationship between them.

The organization of this paper is given as follows. In $\S$\ref{sec Sh}, we recall the definition of PEL Shimura varieties and their integral models. We also introduce the classification of PEL datum, as well as the explicit description of Hermitian symmetric domains of different types.

In $\S$\ref{sec KS}, We give a detailed definition of the two metrized line bundles appearing in the main theorem and the notion of the Kodaira--Spencer map. Next, we construct a morphism between these metrized line bundles from the Kodaira--Spencer map, which is nontrivial for general PEL Shimura varieties. We also check the first statement of Theorem \ref{Intro main1} and \ref{Intro main2}, which follows from deformation theory with a lengthy computation at those places $v|d_B$.

In $\S$\ref{sec metric}, we recall a useful trick from \cite{Yuan2}, which is used to explicitly compute an isomorphism between the tangent space of a complex abelian variety and its first structure sheaf cohomology. Then we give an explicit formula of the Kodaira--Spencer map in the complex setting. Finally, we complete our proof of the main theorems by comparing two metrics on each line bundles using the explicit expression.

\textbf{Acknowledgement}
The author is very grateful to Professor Xinyi Yuan for introducing this problem and for providing many helpful suggestions and comments on this paper as well as on earlier work. He would like to thank his friend Weixiao Lu for many helpful advice. He would also like to thank Roy Zhao for many useful communications.

\section{PEL Shimura varieties and integral models} \label{sec Sh}
In this section, we review some basics on PEL Shimura varieties and their integral models. We will focus on the classification of PEL Shimura varieties and provide the corresponding moduli interpretation. Many of the notation and definitions in this section are taken from \cite{Lan1} and \cite{Lan2}, and we will also incorporate some definitions from \cite{Mil}.

\subsection{PEL-type Shimura data}
In this subsection, we first recall the definition of PEL Shimura data and their associated Shimura varieties, following those in \cite{Kot} and \cite{Lan1}. We then review the classification of PEL data to simplify the subsequent discussion. At the same time, based on this classification, we describe the Hermitian symmetric domains of PEL Shimura varieties.

\subsubsection*{PEL datum and Shimura variety}
Here is the definition of the classical PEL datum.
\begin{definition}\label{PEL datum}
    A global $\mathbf{PEL\ datum}$ is a tuple
    \begin{equation*}
        (B,*,V,\langle\cdot,\cdot\rangle,h)
    \end{equation*}
    where
    \begin{enumerate}
        \item $B$ is a finite-dimensional semisimple $\QQ$-algebra;
        \item $*$ is a positive involution on $B$, i.e., $\tr_{B_\RR/\RR}(xx^*)>0$ for all $0\ne x\in B_\RR$;
        \item $V$ is a finite left $B$-module;
        \item $\langle\cdot,\cdot\rangle$ is a non-degenerate $\QQ$-valued alternating form on $V$ such that $\langle bv,w\rangle=\langle v,b^*w\rangle$ for all $v,w\in V$ and $b\in B$. In particular, the induced involution on $\End(V)$ that sends an endomorphism to its adjoint with respect to $\langle\cdot,\cdot\rangle$ extends $*$ on $B\subset\End (V)$;

        Let $G/\QQ$ be the algebraic group determined by the functor
        \begin{equation*}
            R\mapsto\{x\in\End_{B\otimes R}(V\otimes R)|xx^*\in R^\times\}.
        \end{equation*}
        \item $h:\mathbb{S}\rightarrow G_\RR$ is a homomorphism, such that $h(\bar{z})=h(z)^*$ for any $z\in\CC$, the symmetric real-valued bilinear form $\langle v,h(i)w\rangle$ on $V_\RR$ is positive-definite, and the induced Hodge structure on $V_\RR$ is of type $(1,0),(0,1)$.
    \end{enumerate}
\end{definition}

For convenience, we also call $(V,\langle\cdot,\cdot\rangle)$ a symplectic $B$-module.

Let $X$ be the $G(\RR)$-conjugacy class of $h$. Then the pair $(G,X)$ is a Shimura datum, i.e., for a compact open subgroup $K\subset G(\BA_f)$, the Shimura variety is defined to be
\begin{equation*}
    X_K:=G(\QQ)\backslash X\times G(\BA_f)/ K.
\end{equation*}
Let $V_\CC\cong V_1\oplus V_0$ be the $B_\CC$-module decomposition induced by $h$ such that $h(z)$ acts on $V_1$ (resp. $V_0$) by $z$ (resp. $\bar{z}$). Let $E_0$ be the field of definition of the complex representation $V_1$ of $B$, i.e.,
\begin{equation*}
    E_0=\QQ[\{\tr(b| V_1)\}_{b\in B}].
\end{equation*}
Then the reflex field of $X_K$ is $E_0$.

Note that PEL Shimura varieties all admit a corresponding moduli interpretation. We will discuss this part together with the integral models in the later section \ref{Moduli interpretation and integral model}.

\subsubsection*{Classification of simple factors}
In general, the semisimple $\QQ$-algebra $B$ decomposes into a product of simple algebras. According to \cite[1.2.1.11]{Lan1}, each simple factor of $B$ is mapped by $*$ to itself. Hence the symplectic $B$-module $(V,\langle\cdot,\cdot\rangle)$ decomposes accordingly. 

Now, suppose $B$ is simple. Then its center $F$ is a field, and the elements in $F$ invariant under $*$ form a totally real field $F^+$. By Wedderburn's theorem, $B=\M_k(D)$ for some integer $k$ and some division algebra $D$ over $\QQ$. There is a fundamental classification of these division algebras with positive involutions, which is originally proved by Albert.

\begin{proposition}\cite[Prop  1.2.1.13]{Lan1}\label{Albert proposition}
    There are exactly four possibilities for $D$:
    \begin{enumerate}
        \item $D=F=F^+$.
        \item $F=F^+$, and $D\otimes_{F,\tau} \RR$ is isomorphic to $\M_2(\RR)$ for every archimedean place $\tau$ of $F$, with the involution $*$ given by conjugating the natural involution $x\mapsto x':=\tr_{D/F}(x)-x$ by some element $a\in D$ such that $a^*=-a$. Note that in this case, $a^2$ is totally negative in $F$.
        \item $F=F^+$, and $D\otimes_{F,\tau} \RR$ is isomorphic to the real Hamilton quaternion algebra $\BH$ for every archimedean place $\tau$ of $F$, with the natural involution $*$ given by $x\mapsto x^*:=\tr_{D/F}(x)-x$.
        \item $F$ is totally imaginary over $F^+$, with the complex conjugation $c$, and $D$ satisfies the condition that if $v=v\circ c$ then $\inv_v(D)=0$, and if $v\ne v\circ c$ then $\inv_v(D)+\inv_{v\circ c}(D)=0$.
    \end{enumerate}
\end{proposition}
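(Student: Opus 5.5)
The plan is to follow Albert's classical argument, organized around the restriction of $*$ to the center $F$ of $D$. The positive-involution condition $\tr(xx^*)>0$ on $D_\RR$ is inherited by the center, and it shows that $*|_F$ is a positive involution of the commutative algebra $F$. Writing $F_\RR=\prod \RR\times\prod\CC$, positivity on each factor forces one of two situations. Either $*|_F$ is trivial and every archimedean place of $F$ is real, or $*|_F$ is nontrivial and acts as complex conjugation at every place. In the second case $F$ is a CM field over the totally real field $F^+=F^{*=1}$. In both cases $F^+$ is totally real. This dichotomy, into involutions of the first and of the second kind, is the first case split.

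\textbf{Involutions of the first kind ($F=F^+$).} Here $*$ is an anti-automorphism of $D$ that is $F$-linear. Hence $D\cong D^{\mathrm{op}}$, so $[D]$ has order at most $2$ in $\Br(F)$. I would then show $\dim_F D\le 4$. For this, extend scalars to a real place $\tau$, so that $D_\tau\cong \M_k(\RR)$ or $\M_{k/2}(\BH)$. A positive involution on $D_\tau$ is conjugate to transpose, respectively to quaternionic conjugate-transpose. Its type (orthogonal versus symplectic) is determined by the dimension of the space of symmetric elements, and this dimension is a global invariant over $F$. A division algebra of exponent $2$ with an involution of the first kind has degree a power of $2$; to force degree at most $2$ I would use the structure of the symmetric elements in $D$ together with positivity. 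Concretely, $F[x]$ for symmetric $x$ is a totally real field on which the trace form is positive, and one gets $\deg D\le 2$ by comparing with maximal subfields. This is the step I expect to be the main obstacle, and if needed I would fall back on Albert's original argument as presented in Mumford's \emph{Abelian varieties}, \S21. Once $D$ is $F$ or a quaternion algebra, the quaternion case splits by involution type. If $*$ is symplectic, it equals the canonical involution $x\mapsto \tr(x)-x$, and positivity at $\tau$ forces $D_\tau\cong\BH$ for every $\tau$; this is case (3). If $*$ is orthogonal, Skolem--Noether writes it as $x\mapsto a x' a^{-1}$ with $a^*=-a$, and positivity forces $D_\tau\cong\M_2(\RR)$ at every $\tau$ together with $a^2$ totally negative; this is case (2). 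Because the involution type is global, the two possibilities cannot mix across different real places.

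\textbf{Involutions of the second kind.} Here $*$ is $c$-semilinear on $F$. By the Albert--Riehm--Scharlau theorem, $D$ admits such an involution exactly when $\mathrm{cores}_{F/F^+}[D]=0$ in $\Br(F^+)$. Translated into local invariants, this says: $\inv_v(D)=0$ when $v=v\circ c$, since then the corestriction multiplies the invariant by $[F_v:F^+_w]=2$ and in addition the local algebra must split; and $\inv_v(D)+\inv_{v\circ c}(D)=0$ when $v\ne v\circ c$. This is case (4). Positivity imposes no further restriction here, because at each complex place a positive involution of $\M_k(\CC)$, namely conjugate-transpose, always exists.
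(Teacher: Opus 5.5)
The paper gives no proof of its own here; it only cites Lan. So the benchmark is the classical Albert--Mumford argument, whose outline you follow. Several parts of your proposal are fine:
\begin{itemize}
\item the reduction to the center;
\item the global nature of the orthogonal/symplectic type;
\item the fact that $\BH$ carries only the canonical positive involution, while the canonical involution of $\M_2(\RR)$ is not positive;
\item the conclusion that $a^2$ is totally negative in case (2).
\end{itemize}

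\textbf{The degree bound.} The genuine gap is the step you flag as the main obstacle: proving $\deg D\le 2$ for involutions of the first kind. Your primary route cannot close it. Positivity only constrains $D\otimes_\QQ\RR$, and $\M_k(\RR)$ and $\M_{k/2}(\BH)$ carry positive involutions for every $k$. So no argument built only on positivity and symmetric elements can bound the degree. For instance, a totally real maximal subfield $L=F[x]$ generated by a symmetric $x$ does split $D$. But that only says $[L_w:F_v]\inv_v(D)=0$ at every place, which puts no restriction on $[L:F]$.

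The missing idea is arithmetic: over a number field, the exponent of a central simple algebra equals its index. This is Hasse's theorem. Via Albert--Brauer--Hasse--Noether, the global index is the lcm of the local indices, and the local index is the order of $\inv_v(D)$ in $\QQ/\ZZ$. Hence $2[D]=0$ in $\Br(F)$ gives $\inv_v(D)\in\{0,\tfrac12\}$ for all $v$, and so $\deg D\in\{1,2\}$ immediately. The ``power of $2$'' statement you quote is true over any field but is too weak. Your fallback to Mumford works precisely because that proof uses this fact.

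\textbf{The corestriction step.} Your translation of $\mathrm{cor}_{F/F^+}[D]=0$ into local invariants has the wrong justification. For a finite extension of local fields, $\inv\circ\mathrm{cor}=\inv$; it is \emph{restriction} that multiplies the invariant by the local degree. So at a place $w$ of $F^+$ with a single place $v=v\circ c$ above it, the local component of $\mathrm{cor}[D]$ has invariant $\inv_v(D)$. Its vanishing is literally $\inv_v(D)=0$. At a place $w$ that splits as $v\neq v\circ c$, the invariant is $\inv_v(D)+\inv_{v\circ c}(D)$.

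With your version (multiplication by $2$) you would only get $2\inv_v(D)=0$. The added clause ``in addition the local algebra must split'' has no justification. The conclusion in case (4) is correct, but this step must be replaced by the correct local formula.
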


Here $v$ is any place of $F$, $\inv_v(D)\in\QQ/\ZZ$ is the local invariants of $D$, such that $\inv_v(D)=0$ for all but finitely many $v$, and
\begin{equation*}
    \sum_v \inv_v(D)=0.
\end{equation*}
It is well-known that $D$ is determined by $\{\inv_v(D)\}$ from Albert--Brauer--Hasse--Noether theorem. Also note that when $F\ne F^+$, the positive involution also admits an explicit description; see \cite[Thm 2 (201)]{Mum} for details.

There is a similar statement for simple algebras in \cite[Prop 1.2.1.14]{Lan1}. Then we obtain all finite dimensional semisimple algebras over $\QQ$ with positive involutions.

\begin{definition}\cite[Def 1.2.1.15]{Lan1}\label{type of PEL Shimura variety}
    Let $B\cong\prod_{[\tau]:F\rightarrow\QQ_{[\tau]}}B_{[\tau]}$ be the decomposition of simple factor of a finite-dimensional semisimple algebra $B$ over $\QQ$. We say that $B$ involves a simple factor of $\mathbf{type\ C}$ (resp. $\mathbf{type\ D}$, resp. $\mathbf{type\ A}$) if, for some homomorphism $\tau:F\rightarrow\RR$ (resp. $\tau:F\rightarrow\RR$, resp. $\tau:F\rightarrow\CC$ such that $\tau(F)\not\subset\RR$), we have an isomorphism $B\otimes_{F,\tau}\RR\cong\mathrm{M}_k(\RR)$ (resp. $B\otimes_{F,\tau}\RR\cong\mathrm{M}_k(\mathbb{H})$, resp. $B\otimes_{F,\tau}\RR\cong\mathrm{M}_k(\CC)$), for some integer $k\ge 1$, respecting the positive involutions. In this case, we say that the factor $B_{[\tau]}$ with $[\tau]:F\rightarrow \QQ_{[\tau]}$ determined by $\tau:F\rightarrow\RR$ (resp. $\tau:F\rightarrow\RR$, resp. $\tau:F\rightarrow\CC$) is of type C (resp. type D, resp, type A).
\end{definition}

\begin{example}
    There are many simple examples of type C. When $B/\QQ$ is a quaternion algebra with $V=B$ endowed with a natural symplectic form, this gives the quaternionic Shimura curve over $\QQ$ in \cite{Yuan2}; replace $\QQ$ by any totally real field $F$, this gives the twisted Hilbert modular varieties in \cite{Guo1}. Examples of type A are generally slightly more complicated. When $B=E$ is a CM field with some suitable choice of $V$, this gives the Shimura variety of unitary similitudes and RSZ Shimura variety, which we refer to \cite{RSZ}.
\end{example}

In order to simplify the explicit computation, in the rest of this paper, we make the following assumption.

\begin{assumption}\label{assumption type}
    \begin{enumerate}
        \item In the decomposition of $B$ into simple factors, no factor of type $D$ appears.
        \item Each simple factor of $B$ is a division algebra.
    \end{enumerate}
\end{assumption}

\begin{remark}\label{assumption remark}
    For Assumption 1, we mainly have two reasons. First, since type D factors violate the Hasse principle, c.f. \cite[A.7.2]{Lan3}, most works on PEL-type Shimura varieties exclude this case. Second, by the definition of a type D factor, introducing such a factor does not change the dimension of the Shimura variety, but only its connected components; this has no effect on our computation of the Kodaira--Spencer map.

    For Assumption 2, note that there is a Morita equivalence, so no generality is lost. More precisely, Morita theory shows that for a division algebra $D$, given a left $\M_n(D)$-module $V$, there exists a left $D$-module $W$ such that $V\cong W^n$. It is not hard to check that the polarization and involution correspond under this identification, and given a $\M_n(D)$-linear complex structure on $V$ is equivalent to a $D$-linear complex structure on $W$. Thus, the Shimura variety does not depend on $n$. See also Remark \ref{Morita equivalence remark}.
\end{remark}

\subsubsection*{Hermitian symmetric domain}
Since our computation of the Kodaira--Spencer map also involves comparing metrics on line bundles, it is necessary to give an explicit description of the Hermitian symmetric domains of PEL Shimura varieties. Here we first list all irreducible Hermitian symmetric domains that can occur under Assumption \ref{assumption type}, and then explain, for a given PEL Shimura datum, how to describe the associated Hermitian symmetric domain. 

Here are two classes of irreducible Hermitian symmetric domains that occur in our case, and we refer to \cite[3.1, 3.2]{Lan2} for more details.

\begin{definition}\label{Siegel upper half-space}
    Let $n\ge0$ be an integer. Define the $\mathbf{Siegel\ upper\ half}$-$\mathbf{space}$
    \begin{equation*}
        \mathcal{H}_n:=\{Z=X+iY\in\Sym_n(\CC), Y>0\}.
    \end{equation*}
    Here $\Sym_n$ denotes the space of $n\times n$ symmetric matrices, $X=\Re(Z),\ Y=\Img(Z),\ X,Y\in\Sym_n(\RR)$, and $>$ means positive-definiteness of matrices. $\HH_n$ admits a natural left-action of $g=\matrixx{A}{B}{C}{D}\in\Sp_{2n}(\RR)$:
    \begin{equation*}
        gZ:=(AZ+B)(CZ+D)^{-1}.
    \end{equation*}
    Moreover, 
    \begin{equation*}
        \HH_n\cong\Sp_{2n}(\RR)/\U_n(\RR),
    \end{equation*}
    where 
    \begin{equation*}
        \U_n(\RR):=\left\{\matrixx{A}{B}{-B}{A}\in\Sp_{2n}(\RR)\right\}
    \end{equation*}
    is the stabilizer of $i1_n\in\HH_n$.
\end{definition}

\begin{definition}\label{Unitary symmetric domain}
    Let $a\ge b\ge0$ be any integers. Define the $\mathbf{Unitary\ symmetric\ domain}$
    \begin{equation*}
        \mathcal{D}_{a,b}:=\left\{U\in\M_{a,b}(\CC), 1_b-U^*U>0\right\}.
    \end{equation*}
    Here $\M_{a,b}$ denotes the space of $a\times b$ matrices, and $U^*$ is the conjugate transpose of $U$. $\mathcal{D}_{a,b}$ admits a natural left-action of $g=\matrixx{A}{B}{C}{D}\in\U_{a,b}(\RR)$:
    \begin{equation*}
        gU:=(AU+B)(CU+D)^{-1},
    \end{equation*}
    where 
    \begin{equation*}
        U_{a,b}(\RR)=\left\{g\in\GL_{a+b}(\CC), g^* 1_{a,b}g=1_{a,b}\right\},\quad 1_{a,b}=\matrixx{1_a}{}{}{-1_b}.
    \end{equation*}
    Moreover, 
    \begin{equation*}
        \mathcal{D}_{a,b}\cong \U_{a,b}(\RR)/\big(\U_a(\RR)\times \U_b(\RR)\big),
    \end{equation*}
    where 
    \begin{equation*}
        \U_a(\RR)\times\U_b(\RR)\cong\left\{\matrixx{A}{}{}{D}\in\U_{a,b}(\RR)\right\}
    \end{equation*}
    is the stabilizer of $0\in\mathcal{D}_{a,b}$.
\end{definition}

\begin{remark}
    The notation ``Unitary symmetric domain'' is not standard, but we will use this notation throughout this paper for convenience. Note that $\HH_n$ is an unbounded realization, while $\mathcal{D}_{a,b}$ is a bounded realization.
\end{remark}

In fact, in the subsequent discussion we will focus in particular on the special case $a=b$. In this situation, the Unitary symmetric domain admits a very simple unbounded realization analogous to the Siegel upper half-space, which is more convenient for our later explicit computations.

\begin{definition}\label{Hermitian upper half-space}
    Let $b\ge 0$ be an integer. Define the $\mathbf{Hermitian\ upper\ half}$-$\mathbf{space}$
    \begin{equation*}
        \HH_{b,b}=\{Z=X+iY\in\M_b(\CC)\cong\Herm_b(\CC)\otimes_\RR\CC, Y>0\}.
    \end{equation*}
    Here $\Herm_b$ denotes the space of $b\times b$ Hermitian metric, 
    \begin{equation*}
        X:=\frac{1}{2}(Z+Z^*),\ Y:=\frac{1}{2i}(Z-Z^*),\ X,Y\in\Herm_b(\CC).
    \end{equation*}
    $\HH_{b,b}$ admits a natural left-action of $g=\matrixx{A}{B}{C}{D}\in\U'_{b,b}(\RR)$:
    \begin{equation*}
        gZ:=(AZ+B)(CZ+D)^{-1},
    \end{equation*}
    where 
    \begin{equation*}
        U_{b,b}(\RR)=\left\{g\in\GL_{2b}(\CC), g^* J_{b,b}g=J_{b,b}\right\},\quad J_{b,b}=\matrixx{}{1_b}{-1_b}{}.
    \end{equation*}
    Moreover, 
    \begin{equation*}
        \HH_{b,b}\cong\mathcal{D}_{b,b}\cong\U'_{b,b}(\RR)/(\U_b(\RR)\times \U_b(\RR)),
    \end{equation*}
    where 
    \begin{equation*}
        \U_b(\RR)\times \U_b(\RR)\cong\left\{\matrixx{\frac{A+B}{2}}{\frac{-iA+iB}{2}}{\frac{iA-iB}{2}}{\frac{A+B}{2}}\in\U'_{b,b}(\RR)\right\}
    \end{equation*}
    is the stabilizer of $i1_n\in\HH_{b,b}$.
\end{definition}

For convenience, we denote by 
\begin{equation*}
    X=\Reh(Z)=\frac{1}{2}(Z+Z^*),\ Y=\Imh(Z)=\frac{1}{2i}(Z-Z^*),
\end{equation*}
where ``h'' means Hermitian.

The following proposition shows that, under Assumption \ref{assumption type}, the Hermitian symmetric domain of a PEL Shimura variety must be a product of the two types described above.

\begin{proposition}\label{Classification of Hermitian symmetric domains}
    Given a global PEL datum as Definition \ref{PEL datum}, such that $B$ satisfies Assumption \ref{assumption type}. Then the Hermitian symmetric domain $X$ is a product of some Siegel upper half-spaces and Unitary symmetric domains.
\end{proposition}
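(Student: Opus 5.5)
Since $X$ is the $G(\RR)$-conjugacy class of $h$, I would first reduce to simple factors and then identify each factor case by case. Decompose $B=\prod_j B_j$ into simple factors. By \cite[1.2.1.11]{Lan1} each factor is $*$-stable, so $(V,\langle\cdot,\cdot\rangle)=\bigoplus_j(V_j,\langle\cdot,\cdot\rangle_j)$ is an orthogonal decomposition. Then $G$ is the subgroup of $\prod_j G_j$ with a common similitude factor, and $h$ decomposes compatibly. The similitude torus contributes nothing to $X$, so $X$ is the product of the spaces attached to the simple factors. By Assumption \ref{assumption type}, each factor is a division algebra $D$ with center $F$ of type A or C; type D does not occur.

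Next I would pass to archimedean places. Write $V_\RR=\bigoplus_\tau V\otimes_{F^+,\tau}\RR$ over the real places $\tau$ of $F^+$. The pairing can be written as $\tr_{F^+/\QQ}$ of an $F^+$-bilinear form, so this decomposition is orthogonal, and $G(\RR)^{\der}$ and $X$ split as products over $\tau$.

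At a type C place, $D_\tau\cong\M_n(\RR)$ with involution conjugate to transpose. By Morita equivalence, as in Remark \ref{assumption remark}, $V_\tau\cong W_\tau^n$ where $W_\tau$ is a real symplectic space of dimension $2r$. The centralizer of $D_\tau$ in $\Sp(V_\tau)$ is then $\Sp_{2r}(\RR)$. The conjugacy class of a complex structure $h(i)$ with positive-definite $\langle v,h(i)w\rangle$ is $\Sp_{2r}(\RR)/\U_r(\RR)\cong\HH_r$, by Definition \ref{Siegel upper half-space}.

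At a type A place, $D\otimes_{F^+,\tau}\RR\cong\M_n(\CC)$ with involution conjugate to conjugate-transpose. Morita equivalence reduces $V_\tau$ to a $\CC$-vector space $W_\tau$ of dimension $r$. The symplectic form becomes the imaginary part of a Hermitian form of some signature $(p,q)$, and the relevant group is $\U_{p,q}(\RR)$. The point $h$ determines a decomposition of $W_\tau$ into a positive and a negative definite subspace. Its conjugacy class is therefore $\U_{p,q}/(\U_p\times\U_q)\cong\mathcal{D}_{p,q}$, by Definition \ref{Unitary symmetric domain}, after reordering so that $p\ge q$. Collecting all places $\tau$ and all factors gives the product statement.

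The main obstacle is the type A case: checking that, after Morita reduction, the Hermitian form and the complex structure interact exactly as claimed. In particular I would verify that $h(i)$ commutes with $F\otimes\CC$ and preserves the signature splitting, so that the $G(\RR)$-orbit of $h$ is one orbit of $\U_{p,q}(\RR)$ acting on maximal negative-definite subspaces. To do this I would compute with $h$ in a basis adapted to the Hermitian form, and use positivity of $\langle v,h(i)w\rangle$ to pin down the stabilizer as $\U_p\times\U_q$.
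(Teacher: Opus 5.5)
Your proposal is correct and follows essentially the paper's route: split $B$ (and hence $V$ and $X$) into simple factors, then identify each irreducible piece as $\HH_r$ in type C or $\mathcal{D}_{p,q}$ in type A. The paper simply cites \cite[Chap 3]{Lan2} for that identification, whereas you carry it out yourself via the decomposition over the real places of $F^+$ and Morita reduction to $\Sp_{2r}(\RR)$ and $\U_{p,q}(\RR)$.
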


\begin{proof}
    According to Definition \ref{PEL datum} and \ref{type of PEL Shimura variety} above, we can decompose the semisimple algebra $B$ into a product of simple algebras, and the corresponding Hermitian symmetric domain also decomposes into a product of the associated irreducible Hermitian symmetric domains. Moreover, according to \cite[Chap 3]{Lan2}, the irreducible Hermitian symmetric domains corresponding to factors of type A and C are precisely these two types.
\end{proof}

\subsection{Moduli interpretation and integral model}\label{Moduli interpretation and integral model}
The goal of this subsection is to introduce the integral model of PEL Shimura varieties via moduli interpretation. This will be the main object of the whole paper.

\subsubsection*{Order and PEL-type lattice}
In order to define the integral model, we need to introduce an integral version of the PEL datum in Definition \ref{PEL datum}. To keep the paper concise, we will omit many basic algebraic definitions. The main reference for this part is \cite[1.1, 1.2]{Lan1}.

Let $A$ be a $\QQ$-algebra. A $\ZZ$-$\mathbf{order}$ $\mathcal{O}$ in $A$ is a subring of $A$ having the same identity element as $A$, such that $\mathcal{O}$ is also a full $\ZZ$-lattice in $A$,  i.e., a finitely generated $\ZZ$-module which is torsion free, and $\mathcal{O}\otimes_\ZZ\QQ=A$. For convenience, we simply call $\mathcal{O}$ an order. A $\mathbf{maximal\ order}$ in $A$ is an order not properly contained in another order in $A$. Note that in general, we can extend these definitions by replacing $(\ZZ,\QQ)$ by $(R,\mathrm{Frac}(R))$, where $R$ is any commutative noetherian integral domain.

Now suppose $F$ is a number field and $B/F$ is a central simple algebra of degree $n$, and $\mathcal{O}$ is an $\mathcal{O}_F$-order in $B$. Then the reduced trace pairing
\begin{equation*}
    \tr_{B/F}: B\times B\rightarrow F
\end{equation*}
is nondegenerate. The $\mathbf{discriminant}$ $\Disc=\Disc_{\mathcal{O}/F}$ is the ideal of $\mathcal{O}_F$ generated by the set of elements
\begin{equation*}
    \{\det(\tr_{B/F}(x_ix_j))_{1\le i\le n^2, 1\le j\le n^2}:x_1\cdots, x_{n^2}\in\mathcal{O}\}.
\end{equation*}
The following proposition gives the explicit expression of $\Disc$ locally, which is standard and can be found in \cite[Thm 14.9]{Re}.

\begin{proposition}\label{two discriminant}
    Let $\mathcal{O}$ be a maximal order in $B$, $v$ is a finite place of $F$ such that $B_v$ is a division algebra over $F_v$. Then
    \begin{equation*}
        \Disc_{\mathcal{O}/F,v}=v^{n(n-1)}.
    \end{equation*}
\end{proposition}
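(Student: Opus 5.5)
We plan to compute the discriminant by passing to the completion and using the explicit local structure of the unique maximal order in a local division algebra. The discriminant commutes with localization and completion: $\Disc_{\mathcal{O}/F,v}$ is the discriminant of $\mathcal{O}_v=\mathcal{O}\otimes_{\mathcal{O}_F}\mathcal{O}_{F_v}$ computed with the reduced trace of $B_v/F_v$. Since $\mathcal{O}$ is maximal, $\mathcal{O}_v$ is a maximal order in $B_v$. Because $B_v$ is a division algebra of degree $n$ over $F_v$, it has a unique maximal order
\begin{equation*}
    \Delta=\{x\in B_v:\ \nu(x)\ge 0\},
\end{equation*}
where $\nu=\frac{1}{n}\,v\circ\mathrm{Nrd}$ extends the valuation of $F_v$. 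Hence $\mathcal{O}_v=\Delta$, and it suffices to show that $\Disc(\Delta)=\varpi_v^{n(n-1)}\mathcal{O}_{F_v}$, where $\varpi_v$ is a uniformizer of $F_v$.

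Next we use the standard structure theory of $\Delta$. Let $L/F_v$ be the unramified extension of degree $n$ inside $B_v$. There is a uniformizer $\Pi$ of $\Delta$ with $\Pi^n=\varpi_v$ and $\Pi a\Pi^{-1}=\sigma(a)$ for $a\in L$, where $\sigma$ generates $\mathrm{Gal}(L/F_v)$. Then
\begin{equation*}
    \Delta=\bigoplus_{i=0}^{n-1}\mathcal{O}_L\Pi^i,
\end{equation*}
and the two-sided ideal $\Delta\Pi$ is maximal with residue ring the field $\mathbb{F}_{q^n}$.

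The cleanest route is through the dual lattice. Set
\begin{equation*}
    \Delta^\vee=\{x\in B_v:\ \mathrm{trd}(x\Delta)\subset\mathcal{O}_{F_v}\},
\end{equation*}
so that $\Disc(\Delta)$ is the $\mathcal{O}_{F_v}$-index ideal $[\Delta^\vee:\Delta]$. We claim $\Delta^\vee=\Pi^{1-n}\Delta$. The reduced trace of $a\Pi^i$ with $a\in L$ vanishes for $0<i<n$, because such an element is conjugated by $L^\times$ into a nonzero multiple of itself. Also $\mathrm{trd}(a)=\mathrm{Tr}_{L/F_v}(a)$ for $a\in L$. A direct check on the graded pieces then gives the claim: for $x=\sum_i a_i\Pi^i$, the condition $\mathrm{trd}(x\Delta)\subset\mathcal{O}_{F_v}$ becomes $\mathrm{Tr}_{L/F_v}(a_i\,\sigma^{i}(b)\varpi_v)\in\mathcal{O}_{F_v}$ for all $b\in\mathcal{O}_L$ when $i<0$. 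Since $L/F_v$ is unramified, its different is trivial, so this forces $a_i\in\varpi_v^{-1}\mathcal{O}_L$. Consequently
\begin{equation*}
    \Delta^\vee/\Delta\cong\Pi^{1-n}\Delta/\Delta
\end{equation*}
has length $n-1$ as a $\Delta$-module. Each graded piece $\Pi^{-j}\Delta/\Pi^{-j+1}\Delta$ is isomorphic to $\mathbb{F}_{q^n}$, which has $\mathcal{O}_{F_v}$-length $n$. The total $\mathcal{O}_{F_v}$-length is therefore $n(n-1)$, giving $\Disc(\Delta)=\varpi_v^{n(n-1)}$.

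Alternatively, one can compute $\det(\mathrm{trd}(x_ix_j))$ directly in the basis $\{e_k\Pi^i\}$, where $\{e_k\}$ is an $\mathcal{O}_{F_v}$-basis of $\mathcal{O}_L$. The Gram matrix is then block anti-diagonal: the pairing links $\mathcal{O}_L\Pi^i$ only with $\mathcal{O}_L\Pi^{n-i}$ for $i\ne 0$, and each such block picks up a factor $\varpi_v$, giving $n$ powers per block. The block $i=0$ has unit determinant, being the discriminant of $L/F_v$. Counting the nonzero $i$ again yields $n(n-1)$ powers of $\varpi_v$.

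The main obstacle is justifying the local structure of $\Delta$: its uniqueness as the maximal order, the existence of $\Pi$ normalizing an unramified $L$, and the decomposition $\Delta=\bigoplus_i\mathcal{O}_L\Pi^i$. This is standard local class field theory of division algebras, and we would cite \cite{Re} for it. A smaller point to state carefully is the compatibility of the global discriminant with completion, which holds because the discriminant is defined by determinants of trace forms and is therefore local.
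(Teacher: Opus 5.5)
Your proof is correct. The paper gives no argument of its own for this proposition; it only cites \cite[Thm 14.9]{Re}. What you have written is the standard argument behind that citation:
\begin{enumerate}
    \item reduce to the unique maximal order $\Delta$ of the local division algebra;
    \item use $\Delta=\bigoplus_{i=0}^{n-1}\mathcal{O}_L\Pi^i$;
    \item identify the inverse different $\Delta^\vee=\Pi^{1-n}\Delta$;
    \item count $\mathcal{O}_{F_v}$-lengths, or equivalently read off the block anti-diagonal Gram matrix.
\end{enumerate}
Compared with the bare citation, your route is self-contained and gives slightly more, namely the different $\Pi^{n-1}\Delta$. Its only external input is the structure theorem for $\Delta$, which you correctly flag; it is also visible in the paper's Lemma \ref{base change of maximal order}.

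Two small points need tidying.

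\emph{Index range in the dual-lattice step.} Take $x=\sum_{i=0}^{n-1}a_i\Pi^i$ and test elements $b\Pi^j$ with $b\in\mathcal{O}_L$. The only surviving terms are:
\begin{enumerate}
    \item $i=j=0$, which gives $\mathrm{Tr}_{L/F_v}(a_0b)\in\mathcal{O}_{F_v}$, hence $a_0\in\mathcal{O}_L$;
    \item $i+j=n$ with $0<i<n$, which gives $\mathrm{Tr}_{L/F_v}(a_i\sigma^i(b)\varpi_v)\in\mathcal{O}_{F_v}$, hence $a_i\in\varpi_v^{-1}\mathcal{O}_L$.
\end{enumerate}
So ``$i<0$'' should read ``$0<i<n$'', and the $i=0$ condition should be stated. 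With that correction, $\Delta^\vee=\mathcal{O}_L\oplus\bigoplus_{i=1}^{n-1}\varpi_v^{-1}\mathcal{O}_L\Pi^i=\Pi^{1-n}\Delta$, as you claim.

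\emph{Vanishing of $\mathrm{trd}(a\Pi^i)$ for $0<i<n$.} Your justification is too quick. Conjugating by $b\in L^\times$ multiplies $a\Pi^i$ on the left by $c=b/\sigma^i(b)\in L$. In general $c$ is not a scalar in $F_v$: a scalar $c$ of this form must be a root of unity of order dividing the order of $\sigma^i$. So you cannot pull $c$ out of the trace. Either of the following one-line fixes works:
\begin{enumerate}
    \item The map $a\mapsto\mathrm{trd}(a\Pi^i)$ is $F_v$-linear on $L$ and invariant under $a\mapsto ca$. Hence it kills $a(c-1)$ for every $a\in L$, and these elements fill $L$ once $c\neq 1$. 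Such a $b$ exists because $\sigma^i\neq\mathrm{id}$.
    \item Under $B_v\otimes_{F_v}L\cong\M_n(L)$, the image of $a\Pi^i$ has zero diagonal.
\end{enumerate}
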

Note that according to \cite{Re}, the maximal order is unique up to conjugation in the local case, but is in general not unique up to conjugation over global fields.

Meanwhile, for the convenience of stating our main theorems and carrying out explicit computations in this paper, we introduce the following notion, which we call the $\mathbf{reduced\ discriminant}$ $d_B$ of $B$. Here $d_B$ is an ideal of $\mathcal{O}_F$ such that
\begin{equation*}
    d_B=\prod_{v\ \nonsplit\ \mathrm{in}\ B}v^{n_v}.
\end{equation*}
Here $v$ denotes a finite place of $F$, while we say $v$ is nonsplit (or ramified) in $B/F$ if 
\begin{equation*}
    B_v:=B\otimes_F F_v\cong\M_{n_v}(D_v)
\end{equation*}
for some division algebra $D_v\ne F_v$ and positive integer $1\le n_v<n$. Note that according to Proposition \ref{Albert proposition}, in PEL datum $d_B$ is always an ideal in $\mathcal{O}_{F^+}$.

The following definition is the integral version of Definition \ref{PEL datum}.

\begin{definition}\cite[Def 1.2.1.3]{Lan1}\label{PEL type O-lattice}
    Keep all the notations as in Definition \ref{PEL datum}. A $\mathbf{PEL}$-$\mathbf{type\ \mathcal{O}\ lattice}$ is a tuple
    \begin{equation*}
        (\mathcal{O},*,L,\langle\cdot,\cdot\rangle,h)
    \end{equation*}
    where
    \begin{enumerate}
        \item $\mathcal{O}$ is an order in $B$ mapped to itself under $*$;
        \item $L\subset V$ is an $\mathcal{O}$-lattice (of full rank);
        \item $\langle\cdot,\cdot\rangle$ is a non-degenerate $\ZZ$-valued alternating form on $L$, which is compatible with the one on $V$.
    \end{enumerate}
\end{definition}

We say $L$ is self-dual if the pairing $\langle\cdot,\cdot\rangle$ is perfect. Denote by $L^\vee$ the dual lattice of $L$ under this pairing. Throughout this paper, we make the following assumption on $\mathcal{O}$ and $L$.

\begin{assumption}\label{assumption order}
    \begin{enumerate}
        \item $\mathcal{O}$ is a maximal order in $B$.
        \item $F/\QQ$ is unramified at all places $p$ such that $p|\Nm(d_B)$, where $\Nm=\Nm_{F/\QQ}$ denotes the numerical norm.
        \item $L$ is free as an $\mathcal{O}$-module.
    \end{enumerate}
\end{assumption}

\begin{remark}\label{assumption order remark}
    For Assumption 1, this will be useful in the later explicit computations. For example, the discriminant of maximal order relates closely to properties of $B$ as in Proposition \ref{two discriminant}, and this choice also makes it convenient to give an explicit description of the positive involution in Lemma \ref{explicit involution lemma}. Moreover, choosing a more general $\mathcal{O}$ would make the resulting integral model more complicated and cause unnecessary difficulties.

    For Assumption 2, this ensures that the defined integral models have good properties. More specifically, this assumption ensures that for any $p$, $B_{\QQ_p}$ is split after a base change to some unramified extension of $\QQ_p$. In particular, it guarantees that maximal subgroups always exist.

    For Assumption 3, note that no generality is lost (at least locally). Following \cite[Thm 21.4]{Re}, every maximal order $\mathcal{O}$ over a Dedekind domain is $\mathbf{hereditary}$, i.e., all $\mathcal{O}$-lattices are projective. Then locally at each finite place, $\mathcal{O}$-lattices are free. Moreover, following \cite[Prop 1.2.3.7, 1.2.3.11]{Lan1}, this property remains valid locally if the $\mathcal{O}$-lattices is equipped with symplectic structure. Since the computation of Kodaira--Spencer map will be done locally at each place, it is harmless to assume globally that $L$ is free (or ``standard'' following the notation of \cite[Def 1.2.3.6]{Lan1}) as an $\mathcal{O}$-module.
\end{remark}

\subsubsection*{Moduli interpretation and integral model}

We keep all the notations in Definition \ref{PEL datum} and \ref{PEL type O-lattice}. For any compact open subgroup $K\subset G(\BA_f)$, denote by
\begin{equation*}
    m=m(K):=\prod_{p:\ K_p\subset G(\QQ_p)\ \mathrm{is\ not\ maximal}}p.
\end{equation*}
We give a remark on the term ``maximal''.
\begin{remark}\label{maximal remark}
    By ``maximal", we mean the following: once a lattice $L$ is fixed, we consider those open compact subgroups that preserve $L$, and take one that is maximal among them. A typical example can be found in \cite{Yuan2}, where $L$ is taken to be the fixed maximal order $\mathcal{O}_B$ of $B$; in this case, the maximal open compact subgroup is precisely the multiplicative group $\mathcal{O}_B^\times$ of that maximal order. 
\end{remark}

Following \cite[Def 1.4.1.4]{Lan1} and \cite{Kis}, the Shimura variety $X_K$ has a canonical integral model $\mathcal{X}_K$ over $\mathcal{O}_{E_0}[\frac{1}{m}]$. In fact, $\mathcal{X}_K$ is a stack over $\mathcal{O}_{E_0}[\frac{1}{m}]$ such that for any $\mathcal{O}_{E_0}[\frac{1}{m}]$-scheme $S$, $\mathcal{X}_K(S)$ is the category of tuples $(A,\lambda,i,\eta_K)$, where
\begin{enumerate}
    \item $A$ is an abelian scheme of dimension $\frac{1}{2}\dim_\QQ V$ over $S$;
    \item $\lambda$ is a fixed polarization of $A$, such that $\mathrm{Ros}_\lambda(i(a))=i(a^*)$, where $\mathrm{Ros}$ is the Rosati involution;
    \item $i:\mathcal{O}\rightarrow\End_S(A)$ defines an $\mathcal{O}$-structure of $(A,\lambda)$;
    \item $\Lie_{A/S}$ with its $\mathcal{O}\otimes_\ZZ\ZZ[\frac{1}{m}]$-module structure given naturally by $i$ satisfies the determinant condition given by $(L\otimes_\ZZ \RR,\langle\cdot,\cdot\rangle,h)$, which we refer to \cite[Def 1.3.4.1]{Lan1} or \cite{Kot};
    \item $\eta_K$ is an integral $K$-level structure of type $(L\otimes_\ZZ \BA_f,\langle\cdot,\cdot\rangle)$, which we refer to \cite[Def 1.3.7.6]{Lan1}. See also \cite[2.1]{Yuan2} and \cite[2.2]{Guo1} for an explicit explanation of the level structure.
\end{enumerate}

Certainly, the moduli interpretation remains valid on the generic fiber, hence we also have a moduli interpretation for the Shimura variety $X_K$.

\begin{remark}\label{Determinant condition remark}
    Note that the determinant condition encodes very important information in the case of type A. Indeed, the signature of $V$ viewed as a Hermitian space can be checked from this condition. We refer to \cite[(2.7)]{RSZ} for an explicit example.
\end{remark}

\begin{remark}\label{Morita equivalence remark}
    It is worth noting that, although we explained in Remark \ref{assumption remark} that, via Morita equivalence, the associated Shimura variety does not change when a simple factor $D$ of $B$ is replaced by $\M_n(D)$ (and the $D$-module $W$ is replaced by $W^n$), the moduli interpretation does change slightly. Indeed, under this replacement, the abelian scheme $A$ is replaced by $A^n$, and the other structures change accordingly. This is a direct consequence of idempotent decomposition. We refer to \cite[Chap 5.1]{Yuan2} for an easy example.
\end{remark}

Here we also need to make some remarks on the polarization. Here, by a fixed polarization, we mean that at each finite place, the rank of the kernel of the polarization induced on the associated $p$-divisible group is fixed. We refer to \cite[(4.6)]{RSZ} for a precise interpretation. It is clear that the simplest choice would be to require a principal polarization. However, we need to point out that this is not always possible, an example is mentioned in \cite[Def 3.6]{RSZ}. Moreover, the choice of polarization not only affects the properties of the integral model, but also has an impact on our main theorem. We refer to Remark \ref{polarization remark} for the effect of the polarization on the main theorem. Since there we will see that the effect of the choice of polarization can be made explicit, and for the convenience of later computations as well as to ensure good properties of the integral model, we make the following assumption.

\begin{assumption}\label{Assumption polarization}
    $\lambda$ is a principal polarization.
\end{assumption}

Following \cite{Lan1} or \cite{Kis}, by Assumption \ref{assumption order}, $\mathcal{X}_K$ is flat, semistable and a relative local complete intersection over $\mathcal{O}_{E_0}[\frac{1}{m}]$, and smooth outside $d_B\cdot d_{F/\QQ}$, where $d_{F/\QQ}$ is the discriminant of $F$. When $F\ne\QQ$ or $B$ is nonsplit, $\mathcal{X}_K$ is also proper. Denote by $\XXK^{\sm}$ the smooth locus of $\XXK$, which is the maximal open sub-stack of $\XXK$ that is smooth over $\mathcal{O}_{E_0}[\frac{1}{m}]$. Since $\XXK$ has semistable reduction, the non-smooth locus $\XXK^\sing:=\XXK\backslash\XXK^\sm$ has codimension 2.

Note that as a Deligne--Mumford stack, $\mathcal{X}_K$ has an \'{e}tale cover by schemes, so most terminologies and properties of schemes can be transferred to $\mathcal{X}_K$ via \'{e}tale descent. Therefore, in our treatment, we can usually reduce the problem for stack to that for scheme.

\section{Kodaira--Spencer map and its image over the integral model}\label{sec KS}
In this section, we will introduce several coherent sheaves over the integral model $\mathcal{X}_K$ that are crucial to the main theorem. Then we review the definition of the Kodaira--Spencer map, and construct a morphism between two line bundles, i.e., the Hodge bundle $\underline{\omega}_\mathcal{A}$ and the canonical bundle $\omega_{\mathcal{X}_K/\mathcal{O}_{E_0}[\frac{1}{m}]}$, after some modification. Then we give the statement of our main theorem, and prove the first part of our main theorem, i.e., we prove the theorem locally at each finite place. The discussion in this section is a generalization of the one in \cite[Chap 3]{Guo1}.

Keep all the notations and assumptions as the previous section, in order to simplify the notation and discussion in the later explicit computation, we further make the following restrictive assumption. Note that the conclusions in the more general case can be easily deduced from those under this assumption.

\begin{assumption}\label{restrictive assumption}
    $B$ is a central division algebra over $F$ of degree $n$ (which can be trivial), where $F=F^+$ is either a totally real field or $F/F^+$ is a CM field over a totally real field.
\end{assumption}
For convenience, suppose $g=[F^+:\QQ]$, and the symplectic $B$-module $V$ has dimension $2rn$ over $F^+$. 

\subsection{Kodaira--Spencer map}\label{Kodaira-Spencer map section}
In this subsection, we will define several arithmetic vector bundles that are important to our main theorem on the integral model $\XXK$. Then we give the precise definition of Kodaira--Spencer map, and construct a morphism between two line bundles using a theorem from \cite{Lan1}. At the same time, we will also make use of the techniques for handling sheaves with extra $\mathcal{O}_F\otimes_\ZZ\mathcal{O}_{\mathcal{X}^\sm}$-module structure developed in \cite[Chap 3.3]{Guo1}.

\subsubsection*{Arithmetic line bundles and exact sequence}
Following all the notations about PEL shimura variety above, denote the universal abelian scheme by $\pi:\mathcal{A}\longrightarrow\mathcal{X}_K$ with  $\lambda:\mathcal{A}\longrightarrow\mathcal{A}^t$ the universal principal polarization. Also denote by $\epsilon:\mathcal{X}_K\longrightarrow\mathcal{A}$ and $\epsilon^t:\mathcal{X}_K\longrightarrow\mathcal{A}^t$ the identity sections.

Let $\Omega_{\mathcal{A}/\mathcal{O}_{E_0}[\frac{1}{m}]}$, $\Omega_{\mathcal{A}/\mathcal{X}_K}$ and $\Omega_{\mathcal{X}_K/\mathcal{O}_{E_0}[\frac{1}{m}]}$ be the relative differential sheaves, $\omega_{\mathcal{A}/\mathcal{X}_K}$ and $\omega_{\mathcal{X}_K/\mathcal{O}_{E_0}[\frac{1}{m}]}$ be the relative dualizing sheaves, $\mathcal{T}_{\mathcal{A}/\mathcal{X}_K}$ and $\mathcal{T}_{\mathcal{X}^\sm_K/\mathcal{O}_{E_0}[\frac{1}{m}]}$ be the relative tangent sheaves. Furthermore, we have the Lie algebra
\begin{equation*}
    \Lie(\mathcal{A}):=\epsilon^*\mathcal{T}_{\mathcal{A}/\mathcal{X}_K}\cong \pi_*\mathcal{T}_{\mathcal{A}/\mathcal{X}_K},
\end{equation*}
and the Hodge bundles
\begin{equation*}
    \underline{\Omega}_{\mathcal{A}}:=\epsilon^*\Omega_{\mathcal{A}/\mathcal{X}_K}\cong \pi_*\Omega_{\mathcal{A}/\mathcal{X}_K},
    \underline{\omega}_{\mathcal{A}}:=\epsilon^*\omega_{\mathcal{A}/\mathcal{X}_K}\cong \pi_*\omega_{\mathcal{A}/\mathcal{X}_K}.
\end{equation*}
These definitions clearly imply canonical isomorphisms
\begin{equation*}
    \underline{\Omega}_{\mathcal{A}}\cong \Lie(\mathcal{A})^{\vee},
    \underline{\omega}_{\mathcal{A}}\cong \det\underline{\Omega}_{\mathcal{A}}.
\end{equation*}

In fact, the Hodge bundle $\underline{\omega}_{\mathcal{A}}$ on $\mathcal{X}_K$ has an important metric called the \textbf{Faltings metric} $\lVert\cdot\rVert_{\Fal}$ as follows. Suppose $l=rng$. For any point $x\in\mathcal{X}_K(\mathbb{C})$, let $\alpha\in\underline{\omega}_{\mathcal{A}}(x)\cong\Gamma(\mathcal{A}_x,\omega_{\mathcal{A}_x/\mathbb{C}})$, where $\mathcal{A}_x$ is the fiber of $\mathcal{A}$ above $x$ which is a dimension $l$ complex abelian variety, then $\alpha$ is a holomorphic $l$-form. The Faltings metric is defined by
\begin{equation*}
    \lVert\alpha\rVert_{\Fal}^2:=\frac{1}{(2\pi)^l}\left|\int_{\mathcal{A}_x(\mathbb{C})}\alpha\wedge\bar{\alpha}\right|.
\end{equation*}
See \cite{Yuan1} about this canonical hermitian metric for example. 

Meanwhile, as we remarked before, $\mathcal{X}_K$ is a local complete intersection, with non-smooth locus codimension 2. Hence the dualizing sheaf $\WU$ is a line bundle on $\mathcal{X}_K$, which is canonically isomorphic to $\det(\OU)$. In general, we can endow $\WU$ on $\mathcal{X}_K$ with the Petersson metric $\lVert\cdot\rVert_{\Pet}$. In what follows, we give the precise definitions according to the two types of PEL data.

When $B$ is of type C, or equivalently $F=F^+$ under Assumption \ref{assumption type}, $\dim\XK=gr(r+1)/2$ and we can choose $(Z_1,\cdots,Z_g)$ to be the coordinates of $\HH_r^g$, where each $Z_i$ is a symmetric complex matrix with positive definite imaginary part $Y_i$. Denote by $\de\tau_i:=\wedge^{j\ge k} \de Z_{i,jk}$, where $Z_{i,jk}$ is the $(j,k)$-element in matrix $Z_i$ for $1\le i\le g$. Then clearly $\de\tau:=\wedge_{i=1}^g\de\tau_i$ is a nowhere-vanishing section of $\omega_{\mathcal{X}_K/\mathcal{O}_{E_0}[\frac{1}{m}]}$, and the Petersson metric is defined by 
\begin{equation*}
    \lVert\de\tau\rVert_{\Pet}:=2^{\frac{gr(r+1)}{2}}\prod_{i=1}^g\det(Y_i)^{\frac{r+1}{2}}.
\end{equation*}
Note that when $n=g=1$, this defines the classical Petersson metric of dualizing sheaf over the Siegel modular variety. 

When $B$ is of type A, or equivalently $F/F^+$ a CM field,
\begin{equation*}
    \dim X_K=\sum_{i=1}^g p_i\cdot q_i,\ p_i+q_i=r,
\end{equation*}
where $i$ corresponds to the pairs of conjugate complex places of $F$, $(p_i,q_i)$ is the signature of Hermitian space defined by $V$ at each places. There does exist a definition of Petersson metric for $(p_i,q_i)$ is general, one of the example can be found in Remark \ref{Petersson metric remark}. However, we are only interested in the case when $p_i=q_i=\frac{r}{2}$. In this case, suppose $(Z_1,\cdots,Z_g)$ is the coordinates of $\prod_{i=1}^g\mathcal{H}_{\frac{r}{2},\frac{r}{2}}$. Similarly, denote by $\de\tau_i:=\wedge^{1\le j,k\le \frac{r}{2}} \de Z_{i,jk}$. Then $\de\tau:=\wedge_{i=1}^g\de\tau_i$ is a nowhere-vanishing section of $\omega_{\mathcal{X}_K/\mathcal{O}_{E_0}[\frac{1}{m}]}$, and the Petersson metric is defined by 
\begin{equation*}
    \lVert\de\tau\rVert_{\Pet}:=2^\frac{gr^2}{4}\prod_{i=1}^g\det(Y_i)^{\frac{r}{2}},
\end{equation*} 
where $Y_i=\Imh(Z_i)$ is the positive Hermitian imaginary part of $Z_i$. Note that when $g=1,r=2$, this definition agrees with the previous one.

\begin{remark}\label{Petersson metric remark}
    In \cite{Guo2}, the author considered the unitary Shimura variety, and the Hermitian symmetric domain $D$ is actually $\mathcal{D}_{n,1}$ is our notation. In fact, under the standard coordinate in \cite[Chap 5.3]{Guo2} with $d_{W,\iota}=1$, the definition of $D$ there using projective lines agrees perfectly with the current definition of $\mathcal{D}_{n,1}$ using matrices. Then the metric $h_{L_D}$ defined on the tautological bundle in \cite[Chap 1.1]{Guo2} there can be realized as a Petersson metric, which equals $1-U^*U$ for the corresponding matrix $U\in\M_{n,1}(\CC)$.
\end{remark}

Now we are ready to introduce the Kodaira--Spencer map as follows. Note that the following discussion holds in general case. Consider the exact sequence
\begin{equation*}
    0\longrightarrow\pi^*\Omega_{\mathcal{X}_K/\mathcal{O}_{E_0}[\frac{1}{m}]}\longrightarrow\Omega_{\mathcal{A}/\mathcal{O}_{E_0}[\frac{1}{m}]}\longrightarrow\Omega_{\mathcal{A}/\mathcal{X}_K}\longrightarrow 0.
\end{equation*}
Apply derived functors of $\pi_*$, it gives a connecting morphism
\begin{equation*}
    \phi_0:\pi_*\Omega_{\mathcal{A}/\mathcal{X}_K}\longrightarrow R^1\pi_*(\pi^*\Omega_{\mathcal{X}_K/\mathcal{O}_{E_0}[\frac{1}{m}]}).
\end{equation*}
We call this the Kodaira--Spencer map. Note that there are canonical isomorphisms
\begin{equation*}
    R^1\pi_*(\pi^*\Omega_{\mathcal{X}_K/\mathcal{O}_{E_0}[\frac{1}{m}]})\rightarrow R^1\pi_*\mathcal{O}_{\mathcal{A}}\otimes\Omega_{\mathcal{X}_K/\mathcal{O}_{E_0}[\frac{1}{m}]}\rightarrow\Lie(\mathcal{A}^t)\otimes\Omega_{\mathcal{X}_K/\mathcal{O}_{E_0}[\frac{1}{m}]}\rightarrow\underline{\Omega}_{\mathcal{A}^t}^\vee\otimes\Omega_{\mathcal{X}_K/\mathcal{O}_{E_0}[\frac{1}{m}]},
\end{equation*}
where $\mathcal{A}^t$ denotes the dual universal abelian scheme. Here we give some explanation. The first isomorphism holds by projection formula. The second isomorphism is just $R^1\pi_*\mathcal{O}_{\mathcal{A}}\cong\pi_*^t \mathcal{T}_{\mathcal{A}^t/\mathcal{X}_K}$, where $\pi^t:\mathcal{A}^t\longrightarrow\mathcal{X}_K$ is induced by $\pi$. This follows from deformation theory, see \cite[Remark 9.4(c)]{Mil} for example. The third one is just our definition. Then the Kodaira--Spencer map can be also written as
\begin{equation}\label{phi_1}
    \phi_1:\underline{\Omega}_{\mathcal{A}}\longrightarrow\underline{\Omega}_{\mathcal{A}^t}^\vee\otimes\Omega_{\mathcal{X}_K/\mathcal{O}_{E_0}[\frac{1}{m}]}.
\end{equation}

\subsubsection*{Isomorphism from deformation theory}
Now we introduce an isomorphism from deformation theory. Taking dualization of $\phi_1$ one has
\begin{equation}\label{phi_2}
    \phi_2:\underline{\Omega}_{\mathcal{A}}\otimes\underline{\Omega}_{\mathcal{A}^t}\longrightarrow\Omega_{\mathcal{X}_K/\mathcal{O}_{E_0}[\frac{1}{m}]}.
\end{equation}
Recall that $\mathcal{X}_K^{\sm}$ means the smooth locus of $\mathcal{X}_K$ as above, with non-smooth locus of codimension 2 in a suitable sense. By deformation theory, we have a crucial theorem which is valid for general PEL Shimura varieties. 

\begin{theorem}\cite[Prop 2.3.5.2]{Lan1} \label{deformation}
There is an isomorphism induced by (\ref{phi_2})
\begin{equation}\label{phi_3}
    \phi_3:(\underline{\Omega}_{\mathcal{A}}\otimes\underline{\Omega}_{\mathcal{A}})|_{\mathcal{X}_K^{\sm}}/\mathcal{R}\longrightarrow\Omega_{\mathcal{X}_K^{\sm}/\mathcal{O}_{E_0}[\frac{1}{m}]}.
\end{equation}
Here $\mathcal{R}$ is the subsheaf of $(\underline{\Omega}_{\mathcal{A}}\otimes\underline{\Omega}_{\mathcal{A}})|_{\mathcal{X}_K^{\sm}}$ locally generated by
\begin{equation*}
    (i(\beta)^*u)\otimes v-u\otimes(i(\beta^*)^*v),
\end{equation*}
\begin{equation*}
    (\lambda^*(u^t)\otimes v-\lambda^*(v^t)\otimes u),\quad 
    u,v\in\underline{\Omega}_{\mathcal{A}},\beta\in\mathcal{O}_B.
\end{equation*}
Here $(\mathcal{A},\lambda,i,\eta_K)$ is the universal quadruples represented by $\mathcal{X}_K$, where $i$ is a ring homomorphism to $\End_{\mathcal{X}_K}(\mathcal{A})$ and $\lambda$ is the principal polarization of $\mathcal{A}$.
\end{theorem}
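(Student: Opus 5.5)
The statement is the deformation-theoretic description of the cotangent sheaf of the smooth locus, and I would prove it via Grothendieck--Messing theory (equivalently, Grothendieck's deformation theory of abelian schemes with extra structure), following the strategy of \cite[Prop 2.3.5.2]{Lan1}.

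\textbf{Step 1: tangent spaces.} Since the statement concerns locally free sheaves on $\mathcal{X}_K^{\sm}$ and the map $\phi_3$ is globally defined from (\ref{phi_2}), it suffices to prove that the dual map
\[
\mathcal{T}_{\mathcal{X}_K^{\sm}/\mathcal{O}_{E_0}[\frac{1}{m}]}\to \big((\underline{\Omega}_{\mathcal{A}}\otimes\underline{\Omega}_{\mathcal{A}})/\mathcal{R}\big)^\vee
\]
is an isomorphism. It is enough to check this fiberwise, after verifying that the quotient is locally free of the correct rank. Fix a point $x$ valued in a field $k$ over the smooth locus. A tangent vector at $x$ is a lift of $(A,\lambda,i,\eta_K)$ to $k[\varepsilon]$. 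The level structure is étale and lifts uniquely, so only $(A,\lambda,i)$ matters. By Grothendieck--Messing, or simply because the PD thickening $k[\varepsilon]\to k$ is trivial, lifts of $A$ correspond to lifts of the Hodge filtration $\underline{\Omega}_{A}\subset H^1_{\dR}(A)$. These form a torsor under
\[
\Hom(\underline{\Omega}_A,\,H^1_{\dR}/\underline{\Omega}_A)=\Hom(\underline{\Omega}_A,\Lie(A^t)),
\]
and this identification is exactly the Kodaira--Spencer map $\phi_1$.

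\textbf{Step 2: imposing the extra structures.} Lifting $i(\beta)$ amounts to requiring the filtration to be $\mathcal{O}$-stable, which cuts the deformation space down to $\mathcal{O}$-linear homomorphisms $\underline{\Omega}_A\to\Lie(A^t)$. Lifting $\lambda$ amounts to requiring the filtration to be isotropic for the pairing on $H^1_{\dR}$. Transported via $\lambda^*$, this makes the corresponding bilinear form on $\underline{\Omega}_A\otimes\underline{\Omega}_A$ symmetric. Dually, the tangent space is $\big((\underline{\Omega}_A\otimes\underline{\Omega}_A)/\mathcal{R}\big)^\vee$, where the two families of relations are precisely the generators of $\mathcal{R}$. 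The $\beta^*$ in the generators comes from the Rosati condition $\mathrm{Ros}_\lambda(i(\beta))=i(\beta^*)$ when $\underline{\Omega}_{A^t}$ is identified with $\underline{\Omega}_A$.

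\textbf{Step 3: from fibers to sheaves.} On $\mathcal{X}_K^{\sm}$, both sides are locally free and the fiberwise maps are isomorphisms. The fiberwise dimension of the quotient equals the dimension of the moduli problem, which is where the smoothness hypothesis enters. By Nakayama's lemma, $\phi_3$ is then surjective between locally free sheaves of equal rank, hence an isomorphism.

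\textbf{Main obstacle.} I expect the hard part to be showing that the quotient $(\underline{\Omega}_{\mathcal{A}}\otimes\underline{\Omega}_{\mathcal{A}})/\mathcal{R}$ is locally free with formation commuting with base change on the smooth locus, especially at primes dividing $d_B\cdot d_{F/\QQ}$. There, $\mathcal{O}\otimes\mathcal{O}_S$ is not a product of matrix algebras, so the Morita-style decomposition of $\underline{\Omega}$ is unavailable. I would first try to argue that, at points of $\mathcal{X}_K^{\sm}$, the relative smoothness forces the fiber dimensions computed in Step 2 to be constant. If that fails, I would fall back on the étale-local standard models of \cite[1.2.3]{Lan1} to compute the rank directly.
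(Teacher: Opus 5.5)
The paper does not reprove this statement. It quotes Lan's Proposition 2.3.5.2, which gives the isomorphism on $(\underline{\Omega}_{\mathcal{A}}\otimes\underline{\Omega}_{\mathcal{A}^t})|_{\mathcal{X}_K^{\sm}}$ modulo $(i(\beta)^*u)\otimes v-u\otimes((i(\beta)^t)^*v)$ and $\lambda^*(u^t)\otimes v-\lambda^*(v)\otimes u^t$. Its only actual step is the transport along $\lambda^*:\underline{\Omega}_{\mathcal{A}^t}\xrightarrow{\sim}\underline{\Omega}_{\mathcal{A}}$, which is an isomorphism because $\lambda$ is principal. The Rosati condition $i(\beta)^t\circ\lambda=\lambda\circ i(\beta^*)$ gives $\lambda^*\circ(i(\beta)^t)^*=i(\beta^*)^*\circ\lambda^*$, which turns Lan's relations into the stated ones. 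You compress this into one sentence of Step 2, which is fine, and otherwise reprove Lan's proposition via Grothendieck--Messing. That reproof is sound at primes not dividing the discriminant of $\mathcal{O}$, where the determinant condition (which you never mention) is automatic. As a proof on all of $\mathcal{X}_K^{\sm}$, however, it has a genuine gap exactly where you suspected, and your first proposed remedy cannot close it.

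Put $\mathcal{Q}=(\underline{\Omega}_{\mathcal{A}}\otimes\underline{\Omega}_{\mathcal{A}})/\mathcal{R}$ and $d=\dim X_K$. Base change is not the problem: $\mathcal{Q}\otimes k(x)$ is the quotient of fibres by right exactness. Grothendieck--Messing gives an injection $T_x\hookrightarrow(\mathcal{Q}\otimes k(x))^\vee$ whose image is cut out by the \emph{linearized determinant condition}. This is item 4 of the moduli problem, and your Step 2 drops it. Hence $\phi_3\otimes k(x)$ is surjective, and $\phi_3$ is surjective near $x$ by Nakayama. Injectivity, however, needs $\dim_{k(x)}\mathcal{Q}\otimes k(x)=\dim T_x$. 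Smoothness gives $\dim T_x=d$, and semicontinuity only gives $\dim_{k(x)}\mathcal{Q}\otimes k(x)\ge d$, so ``relative smoothness forces the fibre dimensions to be constant'' is not an argument. At good primes, $\mathcal{O}\otimes\mathcal{O}_S$ is \'etale-locally a product of matrix algebras. There the determinant condition is open and closed, hence vacuous to first order, and $\mathcal{Q}$ is locally free of rank $d$; this is Lan's setting. Over $v\mid d_B\,d_{F/\QQ}$ this fails, and the linearized condition can be a nonzero linear form. For example, if $\mathcal{O}_F\otimes\ZZ_p=\ZZ_p[\varpi]$ is ramified with $p$ odd, then at a point where $\varpi$ kills $\Lie(A_x)$, the trace of $\varpi$ on a first-order lift of $\Lie$ is a nontrivial functional on the $\mathcal{O}$-stable isotropic lifts. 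The missing step is to show that this condition is vacuous at every point of $\mathcal{X}_K^{\sm}$ over such primes, i.e.\ that $\mathcal{Q}$ is locally free of rank $d$ there. One way would be an explicit local computation of $\mathcal{R}$ like Proposition~\ref{explicit basis of R}. Your fallback of a standard form of $L$ describes $H_1^{\dR}$, but not the $\mathcal{O}\otimes\mathcal{O}_S$-type of the Hodge filtration at a bad point, which is what $\mathcal{Q}$ depends on. The paper does not supply this step either. It simply asserts that restricting to the smooth locus allows one to invoke Lan, whose proposition is proved over a base on which the primes dividing the discriminant of $\mathcal{O}$ are inverted.
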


\begin{proof}
Note that we only need the result over smooth locus, hence it is sufficient to apply the result in \cite[Prop 2.3.5.2]{Lan1}. From there we have an isomorphism induced from $\phi_2$
\begin{equation}
    \phi_4:(\underline{\Omega}_{\mathcal{A}}\otimes\underline{\Omega}_{\mathcal{A}^t})|_{\mathcal{X}_K^{\sm}}/\mathcal{R}\longrightarrow\Omega_{\mathcal{X}_K^{\sm}/\mathcal{O}_{E_0}[\frac{1}{m}]}.
\end{equation}
Here $\mathcal{R}$ is the subsheaf of $(\underline{\Omega}_{\mathcal{A}}\otimes\underline{\Omega}_{\mathcal{A}^t})|_{\mathcal{X}_K^{\sm}}$
locally generated by
\begin{equation*}
    (i(\beta)^*u)\otimes v-u\otimes((i(\beta)^t)^*v),
\end{equation*}
\begin{equation*}
    (\lambda^*(u^t)\otimes v-\lambda^*(v)\otimes u^t),\quad u\in\underline{\Omega}_\mathcal{A},
    v\in\underline{\Omega}_{\mathcal{A}^t},\beta\in\mathcal{O}_B.
\end{equation*}
By compatibility with the Rosati involution, $\underline{\Omega}_{\mathcal{A}^t}$ with the dual action of $\mathcal{O}_B$ is isomorphic to $\underline{\Omega}_{\mathcal{A}}$ with the standard action. Then our theorem is a straightforward corollary after taking the principal polarization.
\end{proof}

\subsubsection*{Morphism between line bundles}
In general, it is nontrivial to obtain a canonical morphism between line bundles $\underline{\omega}_\mathcal{A}$ and $\omega_{\mathcal{X}_K/\mathcal{O}_{E_0}[\frac{1}{m}]}$ from the Kodaira--Spencer map \ref{phi_1}. Our strategy is to reduce the general construction into three steps. The first step is to apply the technique used in \cite[Chap 3.3]{Guo1} to reduce to the case $g=1$. Note that most of the notations here are consistent with the reference. The second step is to give the explicit construction when $g=n=1$ and $r\in\ZZ^+$. We will see that under this assumption, it is necessary to divide the discussion into two cases according to the type of PEL Shimura datum. In the case of type C, there will always be a canonical isomorphism between some powers of $\underline{\omega}_\mathcal{A}$ and $\omega_{\mathcal{X}_K/\mathcal{O}_{E_0}[\frac{1}{m}]}$. This construction is consistent with the one of Hilbert modular varieties in \cite{Guo1}. While in the case of type A, the situation is different. The final step is to define the morphism for general $n$ based on the case of $n=1$. This step is also new.

\textbf{Step 1}: In order to reduce to the case when $F^+=\QQ$, a key observation is that since we have embedding $\mathcal{O}_{F^+}\hookrightarrow\mathcal{O}_B$, there is also an $\mathcal{O}_{F^+}$-action. This also implies that there is a natural $\mathcal{O}_{F^+}\otimes_\ZZ\mathcal{O}_{\mathcal{X}^\sm}$-module structure of $\OA$, $\WA$, $\OU$ and $\WU$. 

Following \cite[Chap 3.3]{Guo1}, there are two important operators. The first one is $\otf$, which means taking tensor product of two coherent sheaves over $\mathcal{O}_{F^+}\otimes_\ZZ\mathcal{O}_{\mathcal{X}^\sm}$. The second one is $\df$, which means taking determinant over $\mathcal{O}_{F^+}\otimes_\ZZ\mathcal{O}_{\mathcal{X}^\sm}$. Note that for simplicity, we still use $\otimes$ for $\otimes_{\mathcal{O}_{\mathcal{X}^\sm}}$, and $\det$ for determinant over $\mathcal{O}_{\mathcal{X}^\sm}$. Also note that since $\XXK$ is a local complete intersection, morphisms between line bundles over $\mathcal{X}_K^\sm$ can also be extended to morphisms over $\XXK$.

\begin{remark}\label{F^+ and F remark}
    For type C, the construction here is exactly the same as \cite{Guo1}. However, we should remind the reader that for type A, in general it is not correct to directly apply the $\mathcal{O}_{F}\otimes_\ZZ\mathcal{O}_{\mathcal{X}^\sm}$-module structure. Indeed, it should be note that $\mathcal{O}_F$ is not stable under $*$, and the action of $\mathcal{O}_{F}\otimes_\ZZ\mathcal{O}_{\mathcal{X}^\sm}$ depends on the signature of $V$ viewed as a Hermitian space, or equivalently it depends on the determinant condition. Thus, this action is not natural. 

    Moreover, we can also see this from another perspective. We may compare the ranks of the coherent sheaves on the two sides of the isomorphism \ref{phi_3}. In the type C case, using the discussion in Step 2 below, we see that, via the $\mathcal{O}_{F}\otimes_\ZZ\mathcal{O}_{\mathcal{X}^\sm}$-module structure, the ranks on both sides of the isomorphism agree. In contrast, in the type A case, we find that if the action of the subsheaf $\mathcal{R}$ is also given by the natural $\mathcal{O}_{F}\otimes_\ZZ\mathcal{O}_{\mathcal{X}^\sm}$-module structure, then the ranks on the two sides differ, leading to a contradiction.
\end{remark}

The following lemma shows the property of these two operators, which is important in the later construction.
\begin{lemma}\cite[Lem 3.2]{Guo1}\label{Key lemma}
    $\mathcal{F}$ is a coherent sheaf over $\mathcal{O}_{F^+}\otimes_\ZZ\mathcal{O}_{\mathcal{X}^\sm}$, then there is a canonical isomorphism
    \begin{equation*}
      \det(\mathcal{F})\cong\det(\df(\mathcal{F})).
    \end{equation*}
   Moreover, if $\mathcal{F}$ is a rank 1 locally free sheaf over $\mathcal{O}_{F^+}\otimes_\ZZ\mathcal{O}_{\mathcal{X}^\sm}$, for any morphism of sheaves $\mathcal{M}\longrightarrow\mathcal{N}\otf\mathcal{F}$, where $\mathcal{M}$ and $\mathcal{N}$ are locally free sheaves on the same scheme which are rank $r$ over $\mathcal{O}_{F^+}\otimes_\ZZ\mathcal{O}_{\mathcal{X}^\sm}$, then there is a natural morphism 
   \begin{equation*}
       \df(\mathcal{M})\longrightarrow\df(\mathcal{N})\otf\mathcal{F}^{\otf r}.
   \end{equation*}
    Here $\mathcal{F}^{\otf r}$ means $\mathcal{F}$ tensored with itself over $\mathcal{O}_{F^+}\otimes_\ZZ\mathcal{O}_{\mathcal{X}^\sm}$ for $r$-times.
\end{lemma}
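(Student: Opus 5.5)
The plan is to reduce everything to the case of a free module over $\mathcal{O}_{F^+}\otimes_\ZZ\mathcal{O}_{\mathcal{X}^\sm}$, checking étale locally (or Zariski locally) on $\mathcal{X}^\sm$. Since $\mathcal{O}_{F^+}\otimes_\ZZ\mathcal{O}_{\mathcal{X}^\sm}$ is a finite locally free $\mathcal{O}_{\mathcal{X}^\sm}$-algebra of rank $g$, the determinant $\det(\mathcal{F})$ of the underlying $\mathcal{O}_{\mathcal{X}^\sm}$-module is well defined whenever $\mathcal{F}$ is locally free over $\mathcal{O}_{\mathcal{X}^\sm}$. The strategy is to compare the two determinants via the norm map. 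First I would record the general fact that for a finite locally free algebra $R\to R'=\mathcal{O}_{F^+}\otimes_\ZZ R$ and a locally free $R'$-module $M$ of rank $r$, there is a canonical isomorphism $\det_R(M)\cong \Nm_{R'/R}(\det_{R'}M)\otimes\det_R(R')^{\otimes r}$. Here $\Nm_{R'/R}$ denotes the norm of the line bundle. I would then interpret $\det(\df(\mathcal{F}))$ in the first assertion as $\det_{\mathcal{O}_{\mathcal{X}^\sm}}$ of the rank-one $\mathcal{O}_{F^+}\otimes\mathcal{O}$-module $\df(\mathcal{F})$, so that the same formula with $r=1$ gives $\Nm(\df\mathcal{F})\otimes\det_R(R')$. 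Comparing the two formulas, they agree up to $\det_R(R')^{\otimes (r-1)}$.

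The key point for the first statement is that $\det_R(R')=\det_\ZZ(\mathcal{O}_{F^+})\otimes_\ZZ R$ is trivialised canonically up to sign. A $\ZZ$-basis of $\mathcal{O}_{F^+}$ gives the trivialisation, and changing the basis multiplies it by an element of $\GL(\mathcal{O}_{F^+})$ of determinant $\pm1$. The sign ambiguity disappears after taking even powers. Alternatively, one can use the discriminant pairing $\det(\mathcal{O}_{F^+})^{\otimes2}\cong d_{F^+}\ZZ$, which gives a canonical trivialisation after inverting nothing, at least up to this harmless sign. To obtain the isomorphism $\det(\mathcal{F})\cong\det(\df\mathcal{F})$, I would prove the norm formula above by choosing local $R'$-bases, checking that the transition matrices transform compatibly, and using the multiplicativity $\det_R(\text{restriction of }A)=\Nm_{R'/R}(\det_{R'}A)$ for $A\in\GL_r(R')$. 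That identity is the standard transitivity of determinants, and it is the main technical input.

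For the second statement, the morphism $\mathcal{M}\to\mathcal{N}\otf\mathcal{F}$ is $\mathcal{O}_{F^+}\otimes\mathcal{O}$-linear between locally free rank $r$ modules. Taking $r$-th exterior powers over $\mathcal{O}_{F^+}\otimes\mathcal{O}$ gives
\[
\textstyle\bigwedge^r\mathcal{M}\to\bigwedge^r(\mathcal{N}\otf\mathcal{F})\cong\bigwedge^r\mathcal{N}\otf\mathcal{F}^{\otf r}.
\]
The last isomorphism holds because $\mathcal{F}$ is invertible: locally $\mathcal{F}\cong\mathcal{O}_{F^+}\otimes\mathcal{O}$, and the identification is independent of the trivialisation since a unit $u$ acts by $u^r$ on both sides. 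This is exactly $\df(\mathcal{M})\to\df(\mathcal{N})\otf\mathcal{F}^{\otf r}$, and naturality follows from the functoriality of exterior powers.

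I expect the main obstacle to be making the first isomorphism genuinely canonical rather than canonical up to sign or a discriminant factor. I would handle this by working with the norm functor and tracking the factor $\det_\ZZ(\mathcal{O}_{F^+})$ explicitly. If necessary, I would accept canonicity up to $\pm1$, which suffices for comparing metrics and images of line bundles later.
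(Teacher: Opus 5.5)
The paper gives no argument for this lemma; it is imported from \cite[Lem 3.2]{Guo1}. So your proof is a self-contained route rather than a reconstruction of one in the text, and it is correct. Apply the norm formula $\det_R(M)\cong\Nm_{R'/R}(\det_{R'}M)\otimes\det_R(R')^{\otimes r}$ both to $\mathcal{F}$ and to the rank-one module $\det^*\mathcal{F}$. This gives
\[
\det(\mathcal{F})\cong\det(\det^*\mathcal{F})\otimes\bigl(\det_\ZZ(\mathcal{O}_{F^+})\otimes_\ZZ\mathcal{O}_{\mathcal{X}^\sm}\bigr)^{\otimes(r-1)},
\]
and the only real input is the transitivity $\det_R(A)=\Nm_{R'/R}(\det_{R'}A)$ for $A\in\GL_r(R')$.

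Nothing in your argument requires $\mathcal{O}_{F^+}\otimes_\ZZ\mathcal{O}_{\mathcal{X}^\sm}$ to split as a product over the embeddings of $F^+$. It is therefore uniform over all primes, including those ramified in $F^+$. What it buys over the bare citation is that it pins down the meaning of ``canonical'': the isomorphism is canonical when $r$ is odd, and canonical only up to $\pm1$ (an orientation of $\mathcal{O}_{F^+}$) when $r$ is even. This is harmless here, since the paper only uses the lemma to identify images of line bundles as subsheaves and to compare norms of metrics.

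Your treatment of the second statement is right. It uses $\wedge^r(\mathcal{N}\otimes^*\mathcal{F})\cong\wedge^r\mathcal{N}\otimes^*\mathcal{F}^{\otimes^* r}$ for invertible $\mathcal{F}$, together with the check that a local unit $u$ scales both sides by $u^r$.

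Two small corrections.

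First, discard the ``alternative'' trivialisation via the trace form. The discriminant pairing sends $\det_\ZZ(\mathcal{O}_{F^+})^{\otimes 2}$ onto $d_{F^+}\ZZ\subset\ZZ$, not onto $\ZZ$. Using it would insert a spurious power of $d_{F^+}$ into the integral image, which is exactly the kind of discriminant factor the paper has to track carefully in Section~\ref{sec metric}. The tautological trivialisation $e\otimes e\mapsto 1$ of the square, which your main line already uses, is the correct one.

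Second, your argument covers $\mathcal{F}$ locally free of constant rank over $\mathcal{O}_{F^+}\otimes_\ZZ\mathcal{O}_{\mathcal{X}^\sm}$, not an arbitrary coherent sheaf as the statement literally reads. For a merely coherent $\mathcal{F}$, the operation $\det^*$ would need finite locally free resolutions over $\mathcal{O}_{F^+}\otimes_\ZZ\mathcal{O}_{\mathcal{X}^\sm}$, and that ring need not be regular. The locally free case is all that Step~1 of the construction uses, so you should state the lemma in that generality.
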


A natural consequence of this lemma is that, when we try to construct a morphism between line bundles, the construction can be reduced to the case when $F^+=\QQ$. Indeed, if we finish the construction assuming $F^+=\QQ$, then for general $F$ and $F^+$, we can first apply $\otf$ and $\df$ to construct a morphism between two rank 1 locally free sheaves over $\mathcal{O}_{F^+}\otimes_\ZZ\mathcal{O}_{\mathcal{X}^\sm}$, and  take $\det$ next to get a morphism between two line bundles.

\textbf{Step 2}: Now we suppose $g=n=1$, i.e., $F^+=\QQ$ and $B=F$. The construction is quite different in two cases. When $B$ is of type $C$, it is not hard to check that
\begin{equation*}
    \det((\OA\otimes\OA)/\mathcal{R})\cong\WA^{\otimes r+1}.
\end{equation*}
Indeed, note that by definition 
\begin{equation*}
    (\OA\otimes\OA)/\mathcal{R}\cong\Sym^2(\OA).
\end{equation*}
Thus, by taking determinant on the both sides of the isomorphism \eqref{phi_3}, we obtain the canonical isomorphism
\begin{equation*}
    \WA^{\otimes r+1}\longrightarrow\WU
\end{equation*}
of line bundles. 

When $B$ is of type $A$, we claim that there exists a canonical morphism between some power of two line bundles $\WA$ and $\WU$ induced by the Kodaira--Spencer map if and only if $p=q=\frac{r}{2}$. Here $(p,q)$ is the signature of $V$. To prove this claim, we have the following algebraic lemma.

\begin{lemma}\label{algebraic lemma}
    Suppose $W$ is a free $\mathcal{O}_F$-module of rank $r=p+q$ generated by a basis $(e_1,\cdots, e_r)$, such that the action $i(\beta)$ on $W$ for any $\beta\in\mathcal{O}_F$ is given by
    \begin{equation*}
        i(\beta)e_i=\beta e_i\ (1\le i\le p),\quad i(\beta)e_j=\beta^* e_j\ (p+1\le j\le r).
    \end{equation*}
    Let $R$ be a sub-module of $W\otimes W$ generated by
    \begin{equation*}
       i(\beta)u\otimes v-u\otimes i(\beta^*)v,\ u\otimes v-v\otimes u,\quad u,v\in W,\beta\in\mathcal{O}_F.
   \end{equation*}
    Then the quotient $(W\otimes W)/ R$ is a finitely generated module of rank $pq$, where the non-torsion part is generated by
    \begin{equation*}
        e_i\otimes e_j\ (1\le i\le p, p+1\le j\le r).
    \end{equation*}
    Moreover, there exists some integer $n_r$ such that
    \begin{equation*}
        \det((W\otimes W)/ R)\cong \det(W)^{\otimes n_r}
    \end{equation*}
    if and only if $p=q=\frac{r}{2}$. In this case, $n_r=\frac{r}{2}$.
\end{lemma}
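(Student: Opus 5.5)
The plan is to compute the quotient explicitly on the given basis, working generically: first tensor with $F$, or work over a base where $\mathcal{O}_F\otimes$ splits. Then handle the determinant comparison by a rank and weight count.

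\textbf{Structure of the quotient.} Write $W=W_1\oplus W_2$ with $W_1=\langle e_1,\dots,e_p\rangle$, on which $\beta$ acts by $\beta$, and $W_2=\langle e_{p+1},\dots,e_r\rangle$, on which $\beta$ acts by $\beta^*$. Then $W\otimes W$ splits into the four blocks $W_a\otimes W_b$, and the first family of relations respects this decomposition.

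\begin{enumerate}
\item On $W_1\otimes W_1$ the first relation gives $(\beta-\beta^*)\,e_i\otimes e_j\in R$. Choosing $\beta\in\mathcal{O}_F\setminus\mathcal{O}_{F^+}$, we have $\beta-\beta^*\neq0$, so this block is torsion in the quotient. The same argument applies to $W_2\otimes W_2$.
\item On $W_1\otimes W_2$ the relation reads $\beta e_i\otimes e_j-e_i\otimes(\beta^*)^*e_j=0$, which is vacuous. The same holds on $W_2\otimes W_1$.
\item The symmetry relation $u\otimes v-v\otimes u$ identifies $W_2\otimes W_1$ with $W_1\otimes W_2$.
\end{enumerate}

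Hence, modulo torsion, the quotient is $W_1\otimes W_2$. It is free of rank $pq$ with basis $e_i\otimes e_j$ for $1\le i\le p<j\le r$. One should check that the symmetry relation does not produce extra relations inside $W_1\otimes W_2$. It does not, since swapping only pairs this block with the other mixed block.

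\textbf{Determinant comparison.} Taking determinants of the torsion-free part gives
\[
\det\big((W\otimes W)/R\big)\cong \det(W_1\otimes W_2)\cong \det(W_1)^{\otimes q}\otimes\det(W_2)^{\otimes p},
\]
while $\det W\cong\det W_1\otimes\det W_2$. The isomorphism $\det(W_1)^{q}\otimes\det(W_2)^{p}\cong(\det W_1\otimes\det W_2)^{n_r}$ must hold canonically, that is, functorially and equivariantly in the $\mathcal{O}_F$-action, or equivalently for $\det W_1$ and $\det W_2$ treated as independent line bundles. This forces $q=n_r=p$, so $p=q=\frac r2$ and $n_r=\frac r2$.

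The step I expect to be the main obstacle is justifying that $\det W_1$ and $\det W_2$ are genuinely independent. The plan here is an equivariance (weight) argument. Let $\beta\in\mathcal{O}_F^\times$, or more cleanly the torus $\mathrm{Res}_{F/\QQ}\mathbb{G}_m$, act. Then $\det W_1$ carries the character $\beta^{p}$ and $\det W_2$ carries $\beta^{*q}$, while the quotient determinant carries $\beta^{pq}\beta^{*pq}$. Matching this against $\beta^{pn_r}\beta^{*qn_r}$ as characters, which are independent because $\beta$ and $\beta^*$ are distinct embeddings, gives $pn_r=pq$ and $qn_r=pq$. When $pq\ne0$ this yields $n_r=q=p$, and the degenerate cases $pq=0$ should be excluded or noted separately. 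This character computation is also what will later make the morphism canonical in the universal setting, where $\OA$ splits as $\OA^{(1)}\oplus\OA^{(2)}$ via the $\mathcal{O}_F\otimes\mathcal{O}_{\mathcal{X}^\sm}$-action. Conversely, if $p=q$, the displayed isomorphism gives $\det(W_1\otimes W_2)\cong\det(W)^{\otimes r/2}$ directly.
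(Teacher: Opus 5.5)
Your proposal is correct and follows the paper's route. Both arguments observe that the diagonal blocks $W_1\otimes W_1$ and $W_2\otimes W_2$ are killed by $\beta-\beta^*$, that the first relation is vacuous on the mixed blocks, and that symmetry identifies the two mixed blocks, leaving the free part of rank $pq$ spanned by $e_i\otimes e_j$ with $i\le p<j$. Your equivariance argument via $\det(W_1\otimes W_2)\cong\det(W_1)^{\otimes q}\otimes\det(W_2)^{\otimes p}$ is a valid way to make precise the paper's terse ``checked directly from the basis of the non-torsion part''; it needs the torus or $F^\times$ rather than $\mathcal{O}_F^\times$, which is finite when $F$ is imaginary quadratic. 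You are also right that the case $pq=0$, where $n_r=0$ trivially works, is an exception to the stated ``only if''.
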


\begin{proof}
    The proof is completely elementary. The key point is to find that
    \begin{equation*}
        \beta e_i\otimes e_j-\beta^* e_i\otimes e_j\in R,\ (1\le i,j\le p\ \mathrm{or}\ p+1\le i,j\le r).
    \end{equation*}
    Hence we conclude that when $1\le i,j\le p\ \mathrm{or}\ p+1\le i,j\le r$, $d_{E/F}e_i\otimes e_j$, where $d_{E/F}$ is the discriminant of $E/F$. This torsion part is killed after taking determinant, and note that the existence of $n_r$ can be checked directly from the basis of the non-torsion part of $(W\otimes W)/ R$. Then the remaining discussion is trivial.
\end{proof}

Note that our definition of $W$ and $R$ is compatible with coherent sheaves $\OA$ and $\mathcal{R}$. Thus, when $p=q=\frac{r}{2}$, by taking determinant on the both sides of the isomorphism \eqref{phi_3}, we obtain the canonical isomorphism
\begin{equation*}
    \WA^{\otimes \frac{r}{2}}\longrightarrow\WU.
\end{equation*}
When $p\ne q$, such canonical morphism induced by Kodaira--Spencer map does not exist.

\textbf{Step 3}: Keep supposing $F^+=\QQ$. We first deal with the simple case that $B=\M_n(F)$. Recall that in Remark \ref{Morita equivalence remark}, we have explained that there is no essential difference between the Shimura variety in the case of $B=\M_n(F)$ and the Shimura variety in the case of $B=F$. By comparing the universal abelian scheme, we obtain a canonical isomorphism
\begin{equation*}
    \WA^{\otimes r+1}\longrightarrow\WU^n
\end{equation*}
when $B$ is of type C, and 
\begin{equation*}
    \WA^{\otimes \frac{r}{2}}\longrightarrow\WU^n
\end{equation*}
when $B$ is of type $A$ with signature $(\frac{r}{2},\frac{r}{2})$.

Now we consider the general case. A key observation is that, there always exists a canonical injection
\begin{equation*}
    \WA^{\otimes r+1}\longrightarrow\WU^n\quad (\mathrm{resp}.\ \WA^{\otimes \frac{r}{2}}\longrightarrow\WU^n).
\end{equation*}
This can be checked from our later discussion in Section \ref{Image over the integral model}. Alternatively, we provide a quick explanation here. Since $B$ is a cyclic algebra over $F$ in general, there always exists some cyclic extension $E/F$, such that $[E:F]=n$ and $B\otimes_F E\cong\M_n(E)$. Moreover, under this base change, there is an injection of maximal order
\begin{equation*}
    \mathcal{O}_B\otimes_{\mathcal{O}_F}\mathcal{O}_E\hookrightarrow\M_n(\mathcal{O}_E),
\end{equation*}
which we refer to Lemma \ref{base change of maximal order}. Thus, applying Theorem \ref{deformation}, we obtain the canonical injection above after a base change from $\mathcal{O}_{F}\otimes_\ZZ\mathcal{O}_{\mathcal{X}^\sm}$-module to $\mathcal{O}_{E}\otimes_\ZZ\mathcal{O}_{\mathcal{X}^\sm}$-module. In fact, the explicit computations in next subsection will show that this morphism can be defined over $\mathcal{O}_F$.

Now, we are ready to state the main theorems in this paper. For simplicity of statement, we divide the main theorem into two cases according to the type of PEL datum. Recall that $(B,*,V,\langle\cdot,\cdot\rangle,h)$ is a PEL datum, such that $B$ is a central division algebra over a number field $F$ of degree $n$, $F^+\subset F$ is the fixed field of $*$ with $g=[F^+:\QQ]$, and $V$ a symplectic $B$-module with dimension $2rn$ over $F^+$. Since $B$ is division, we always have $n|2r$ (resp. $n|r$) when $B$ is of type C (resp. type A). The associated PEL-type $\mathcal{O}$ lattice is denoted by $(\mathcal{O},*,L,\langle\cdot,\cdot\rangle,h)$ such that the order $\mathcal{O}\subset B$ is maximal and $L$ is self-dual. Denote by $d_B$ the reduced discriminant of $B$, which is always an ideal in $\mathcal{O}_{F^+}$, and we denote by $\Nm(d_B)=\Nm_{F^+/\QQ}(d_B)\in\ZZ$ the numerical norm of $d_B$.
\begin{theorem}[Type C]\label{main1}
Suppose $B$ is of type C. There is a canonical injection
\begin{equation}
    \psi:\underline{\omega}_\mathcal{A}^{\otimes r+1}\longrightarrow\omega^{\otimes n}_{\mathcal{X}_K/\mathcal{O}_{E_0}[\frac{1}{m}]},
\end{equation}    
and its image is the subsheaf $\Nm(d_B)^{\frac{r(r+1)}{2}}\WU^{\otimes n}$ of $\WU^{\otimes n}$. Moreover, under $\psi$, we have $\lVert\cdot\rVert_{\Fal}^{r+1}=\lVert\cdot\rVert^n_{\Pet}$.
\end{theorem}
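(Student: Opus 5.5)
The argument splits into three parts: the construction of $\psi$, the computation of its image one finite place at a time, and the comparison of metrics at the archimedean places.

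\textbf{Construction.} I take $\psi$ to be the map built in Steps 1--3 above. Over $\mathcal{X}_K^{\sm}$, Theorem \ref{deformation} gives the isomorphism $\phi_3$. Applying $\df$ and Lemma \ref{Key lemma}, and then $\det$, reduces everything to $\mathcal{O}_{F^+}\otimes_\ZZ\mathcal{O}_{\mathcal{X}^\sm}$-modules, hence to the case $F^+=\QQ$. Since $\mathcal{X}_K$ is a local complete intersection and its singular locus has codimension $2$, the resulting morphism of line bundles extends to all of $\mathcal{X}_K$. Injectivity holds because $\psi$ is an isomorphism on the generic fiber, where $B$ splits after base change and Step 3 applies.

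\textbf{Image.} Both sides are line bundles, so the image is $I\cdot\WU^{\otimes n}$ for an ideal $I$. It is enough to determine $I$ locally at each finite place $v$ of $F^+$, after passing to completed local rings.
\begin{itemize}
\item If $v\nmid d_B$, then $\mathcal{O}_{B,v}\cong\M_n(\mathcal{O}_{F_v})$. Morita equivalence (Remark \ref{Morita equivalence remark}) together with Step 2, where $\Sym^2$ has determinant $\WA^{\otimes r+1}$, shows that $\psi$ is an isomorphism at $v$.
\item If $v\mid d_B$, I use Assumption \ref{assumption order}: $F_v/\QQ_p$ is unramified, and $B_v\cong\M_{n_v'}(D_v)$ with $D_v$ a division algebra. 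Choose an unramified extension $E_w$ of degree $n$ splitting $B_v$. By Lemma \ref{base change of maximal order}, $\mathcal{O}_{B,v}\otimes\mathcal{O}_{E_w}$ sits inside $\M_n(\mathcal{O}_{E_w})$ with explicit index. The maximal order of $D_v$ has the standard form $\mathcal{O}_{L}[\Pi]$ with $L/F_v$ unramified and $\Pi^{d}=\varpi$.
\end{itemize}
Using this description, I decompose $\OA\otimes\mathcal{O}_{E_w}$ into eigen-pieces for $\mathcal{O}_{L}$ and write the relations $\mathcal{R}$ explicitly, with $\Pi$ moving one eigenspace to the next up to a factor of $\varpi$. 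Comparing $\det\big((\OA\otimes\OA)/\mathcal{R}\big)$ with $\det$ of the split version then gives the discrepancy. The split version is $\Sym^2$ of the $e_{11}$-component, which has rank $r$ over $\mathcal{O}_{F^+}$ after Morita. The expected local exponent is $\frac{r(r+1)}{2}\cdot n_v$. This is consistent with the discriminant computation $\Disc=v^{n(n-1)}$ in Proposition \ref{two discriminant} and with the known cases in \cite{Yuan2} and \cite{Guo1}. Taking norms down to $\QQ$ through $\det$ produces $\Nm(d_B)^{r(r+1)/2}$.

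This local computation at $v\mid d_B$ is the main obstacle. I would first check it when $r=1$ and $n=2$, matching \cite{Guo1}. For general $n$ I would organize the determinant computation as a product over the $n$ eigencharacters of $L$, and track which pairs $(i,j)$ in $\Sym^2$ pick up a factor of $\varpi$.

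\textbf{Metrics.} Over $\CC$, $B_\tau\cong\M_n(\RR)$, so by Morita $\mathcal{A}_x\cong A_0^n$, where $A_0$ is a principally polarized abelian variety of dimension $rg$ with period matrix $(Z_i,1)$ at the $g$ real places. Following the trick recalled from \cite{Yuan2}, I write $\KS$ explicitly. The tangent vector $\partial/\partial Z_{jk}$ maps to the symmetric tensor $\de z_j\cdot\de z_k$, with the factor $\frac12$ off the diagonal, up to $2\pi i$ normalizations. This gives $\psi\big((\de z_1\wedge\cdots\wedge\de z_{rg})^{\otimes(r+1)}\big)=c\,\de\tau$ for an explicit constant $c$, with the $n$-th power coming from the $n$ copies. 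The two norms are then computed directly: $\lVert\de z\rVert_{\Fal}^2=2^{rg}\prod_i\det Y_i$ from the volume of $\CC^{rg}/(\ZZ^{rg}+Z\ZZ^{rg})$, and $\lVert\de\tau\rVert_{\Pet}$ from its definition. Checking that the powers of $2$ and $\det Y_i$ match gives $\lVert\cdot\rVert_{\Fal}^{r+1}=\lVert\cdot\rVert_{\Pet}^n$. The $2^{gr(r+1)/2}$ normalization in the Petersson metric was chosen precisely to absorb these constants.
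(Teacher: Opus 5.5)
Your outline follows the paper's route: the construction by Steps 1--3, the image via an explicit description of $\mathcal{R}$ after an unramified base change, and the metrics via Lemma \ref{app} plus a volume computation. However, neither substantive step is actually carried out, and the archimedean step rests on a false claim.

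\textbf{Image at $v\mid d_B$.} You only announce the expected exponent. In type C you should first invoke Albert's classification (Proposition \ref{Albert proposition}). It says $B$ is $F$ or a totally indefinite quaternion algebra. Hence $n\le 2$, and the only nonsplit local algebra is the quaternion division algebra, with $n_v=1$. Your general-degree eigencharacter bookkeeping therefore addresses cases that do not occur.

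The missing computation is that of Example \ref{type C example}, carried out with $r$ columns:
\begin{enumerate}
\item Over the unramified quadratic $\mathcal{O}_{E_v}$, $M_E=M_x\oplus M_y$ splits into the two $\mathcal{O}_{E_v}$-eigenspaces, each free of rank $r$. Here $u$ acts by $y_j\mapsto x_j$ and $x_j\mapsto \pi y_j$.
\item You must also determine the action on the dual basis $x'_j,y'_j$ of $M^t_E$. This is where the involution $*$ enters the relations of Theorem \ref{deformation}, and your plan does not address it. From the local form of $*$ (Lemma \ref{explicit involution lemma}), the action is given by the same formulas.
\item Then $R_E$ contains $x_j\otimes y'_k$, $y_j\otimes x'_k$, $x_j\otimes x'_k-\pi\, y_j\otimes y'_k$, and the symmetry relations coming from $\lambda$. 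So the quotient is $\Sym^2 M_y$.
\item Meanwhile $\WA^{\otimes r+1}\cong\det\Sym^2M_x\otimes\det\Sym^2M_y$. The map $\Sym^2M_x\to\Sym^2M_y$ is multiplication by $\pi$ on each of the $r(r+1)/2$ basis vectors, so the image is locally $\pi^{r(r+1)/2}\WU^{\otimes 2}$.
\end{enumerate}
Without this computation, the image statement remains a conjecture.

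\textbf{Metrics.} For $B$ a division algebra, $\mathcal{A}_x$ is not $A_0^n$. Morita over $\RR$ splits $V_\RR$ and $\Lie(\mathcal{A}_x)$, but the idempotents of $\M_n(\RR)$ do not preserve the $\mathcal{O}_B$-lattice $H_1(\mathcal{A}_x,\ZZ)$. Indeed, for very general $x$ one has $\End^0(\mathcal{A}_x)=B$, so $\mathcal{A}_x$ is simple. Likewise, for $g>1$ the lattice is built from $\mathcal{O}_F$, not from $\ZZ^{rg}+Z\ZZ^{rg}$.

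The real content of the archimedean step is a cancellation you never see in your product model:
\begin{enumerate}
\item The principal polarization corresponds to the Riemann form \eqref{general Riemann form}, twisted by $\mu$ with $\det\mu^n=\Disc_{\mathcal{O}_B/\ZZ}$. This discriminant also carries the contribution of $d_F$.
\item Through Lemma \ref{app}, this twist puts a power of $\det\mu$ into the constant of $\psi$.
\item The same power appears in $\vol(\CC^l/\Lambda_x)$, hence in $\lVert\cdot\rVert_{\Fal}$, and the two cancel.
\end{enumerate}

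Your normalizations are also off. By definition, $\lVert\bigwedge\de z\rVert_{\Fal}^2=\pi^{-l}\vol(\CC^l/\Lambda)$, not $2^{rg}\prod_i\det Y_i$; you dropped the factor $(2\pi)^{-l}$. It is this $\pi^{-l}$, together with the $2^{gr(r+1)/2}$ in $\lVert\cdot\rVert_{\Pet}$, that balances the factor $(2\pi i)^{-1}$ which Lemma \ref{app} attaches to each $\de Z_{jk}$. With your stated Faltings norm and that forced constant, the two sides would differ by $(2\pi)^{r(r+1)/2}$ already for $F=\QQ$, $n=1$. So leaving $c$ undetermined ``up to $2\pi i$'' means the metric identity is asserted rather than verified.
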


\begin{theorem}[Type A]\label{main2}
Suppose $B$ is of type A, and the signature of $V$ at each archimedean place of $F$ is $(\frac{r}{2},\frac{r}{2})$. There is a canonical injection
\begin{equation}
    \psi:\underline{\omega}_\mathcal{A}^{\otimes \frac{r}{2}}\longrightarrow\omega_{\mathcal{X}_K/\mathcal{O}_{E_0}[\frac{1}{m}]}^{\otimes n},
\end{equation}    
and its image is the subsheaf $\Nm(d_B)^{\frac{r^2}{4}}\WU^{\otimes n}$ of $\WU^{\otimes n}$. Moreover, under $\psi$, we have $\lVert\cdot\rVert_{\Fal}^{\frac{r}{2}}=\lVert\cdot\rVert_{\Pet}^n$.
\end{theorem}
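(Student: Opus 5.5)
Using Lemma \ref{Key lemma}, the construction reduces to $F^+=\QQ$; I keep that reduction throughout and treat the integral statement and the metric statement separately. For the \emph{integral statement} I work on $\mathcal{X}_K^{\sm}$, since the complement has codimension $2$ and $\mathcal{X}_K$ is a local complete intersection. There, a morphism of line bundles is determined, together with its image, by its behaviour in codimension one.

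The map $\psi$ itself comes from Theorem \ref{deformation}. Taking determinants of $\phi_3$ identifies $\WU$ with $\det\big((\OA\otimes\OA)/\mathcal{R}\big)$. I then pass to the Morita-split situation by a base change $\mathcal{O}_F\to\mathcal{O}_E$ with $E/F$ cyclic of degree $n$ splitting $B$. Over $E$, Step 3 gives a canonical isomorphism $\WA^{\otimes r+1}\cong\WU^{\otimes n}$ in type C, and $\WA^{\otimes r/2}\cong\WU^{\otimes n}$ in type A. Comparing this with the $\mathcal{O}_F$-structure through the inclusion $\mathcal{O}_B\otimes_{\mathcal{O}_F}\mathcal{O}_E\hookrightarrow \M_n(\mathcal{O}_E)$ of Lemma \ref{base change of maximal order} yields an injection $\psi$. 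Away from $d_B$ this inclusion is an isomorphism (with $F$ unramified over the relevant primes by Assumption \ref{assumption order}), so $\psi$ is an isomorphism there.

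The remaining task is to compute, at each place $v\mid d_B$, the length of the cokernel of $\psi$ at the generic point of the fibre. I compute it by a local linear-algebra count after completing at $v$. Write $B_v=\M_{n_v}(D_v)$ with $\mathcal{O}_{D_v}=\mathcal{O}_{L}[\Pi]$, where $L/F_v$ is unramified of degree $d=n/n_v$ and $\Pi^d=\varpi$. The module $\Lie$ then decomposes into eigenspaces for $\mathcal{O}_L$, permuted cyclically by $\Pi$. The relations in $\mathcal{R}$ involving $\Pi$ identify graded pieces of $\OA\otimes\OA$ only up to factors of $\varpi$. Counting these powers, using the discriminant formula of Proposition \ref{two discriminant} as a consistency check, should give index $v^{r(r+1)/2}$ in type C and $v^{r^2/4}$ in type A, for each factor $v$ of $d_B$ (that is, $n_v$ per place after taking norms). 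Taking $\Nm_{F^+/\QQ}$ then gives the image $\Nm(d_B)^{r(r+1)/2}\WU^{\otimes n}$, respectively $\Nm(d_B)^{r^2/4}\WU^{\otimes n}$. \textbf{I expect this local count at ramified places to be the main obstacle.} To handle it I would first treat the quaternion case $n=2$ (compare \cite{Yuan2}, \cite{Guo1}) and then generalize the cyclic-grading bookkeeping.

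For the \emph{metric statement} I compute over $\CC$ at a point $Z$ of $\HH_r^g$ (or $\prod\Hr$). After reducing to $g=1$ and, through the Morita comparison, to $n=1$, the abelian variety is $\CC^{l}/(\ZZ^r Z+\ZZ^r)$ in the appropriate $B$-equivariant form. Using the trick of \cite{Yuan2}, I make the isomorphism $\Lie(\mathcal{A}^t)\cong R^1\pi_*\mathcal{O}$ explicit through $\bar\partial$-closed forms, which gives the Kodaira--Spencer image $\phi(\de z_j\otimes \de z_k)=c\,\de Z_{jk}$ with an explicit constant $c$. The Faltings norm of $\de z_1\wedge\cdots\wedge \de z_l$ is a power of $2$ times $\det(Y)^{1/2}$, up to the discriminant of the lattice, which is $1$ by self-duality. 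Raising this to the power $r+1$ (respectively $r/2$) and comparing with $\lVert \de\tau\rVert_{\Pet}^n$ should match exactly, including the powers of $2$ and $2\pi$ that the normalizations were chosen to absorb. Finally, the extra factor $n$ from $B\otimes\CC\cong\M_n$ is handled by the $A\mapsto A^n$ description in Remark \ref{Morita equivalence remark}.
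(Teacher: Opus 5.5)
Your construction of $\psi$ and your reductions to $\XXK^{\sm}$ and to $F^+=\QQ$ follow the paper. The argument breaks down in the archimedean half, which does not apply to type A as written.

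The model $\CC^{l}/(\ZZ^rZ+\ZZ^r)$, with $\phi(\de z_j\otimes\de z_k)=c\,\de Z_{jk}$ and $\lVert\wedge\de z\rVert_{\Fal}\sim\det(Y)^{1/2}$, is the Siegel picture. Its symmetric pairing produces the exponent $r+1$ and can never produce $\frac{r}{2}$. In type A the family over $\Hr$ is $\CC^{nr}/\Lambda_Z$. The lattice carries $\sigma(\beta)\matrices{Z}{I}$ in the first $\frac{r}{2}$ columns and $\bar{\sigma}(\beta)\matrices{Z^t}{I}$ in the last $\frac{r}{2}$.

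The Kodaira--Spencer pairing kills $\de z_{ij}\otimes\de z_{lk}$ when both columns lie in the same block. It sends $\de z_{ij}\otimes\de z_{l(k+\frac{r}{2})}$ to $\frac{\mu_{li}}{2\pi i}\de Z_{kj}$, and this opposite-block pairing is what yields the exponent $\frac{r}{2}$. The covolume is $\det(Y)^{2n}$ times that of $\sigma(\OBr)$, so $\lVert\wedge\de z\rVert_{\Fal}$ grows like $\det(Y)^{n}$. With your exponent the $Y$-dependence would not even match $\lVert\de\tau\rVert_{\Pet}^n\propto\det(Y)^{rn/2}$. Since $E$ is real-valued while the coordinates are complex, you also need the trace/semilinearity step of Theorem \ref{Explicit archimedean} to write $h(\delta_{ik})$ as $2\pi i\,h(E(w,\cdot))$.

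Your reduction to $n=1$ via $A\mapsto A^n$ is only available when $\mathcal{O}_B\cong\M_n(\mathcal{O}_F)$. For division $B$, and already for $\mathcal{O}_F$ since $d_F\neq 1$, the covolume of $\sigma(\OBr)$ in the $B$-adapted coordinates is $\det\mu^r$. Self-duality of $E_B$ does not make this $1$. Instead it puts $\mu$ into $E_B$, and hence into the constant $\big(\det\mu/(2\pi i)^n\big)^{r^2/4}$ of $\psi$. The identity $\lVert\cdot\rVert_{\Fal}^{r/2}=\lVert\cdot\rVert_{\Pet}^n$ holds because these two occurrences of $\det\mu$ cancel; this is exactly the bookkeeping the paper corrects in \cite{Guo1}. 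You could rescue your reduction by proving that, for a principal polarization, both $\psi$ and $\lVert\cdot\rVert_{\Fal}$ depend only on the real polarized family $(\Lambda_Z\otimes\RR,E_\RR)$, which is split over $\RR$. That argument is absent, and even with it the Hermitian computation above must still be done.

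On the integral side, your eigenspace set-up is the paper's cyclic-algebra description ($L=E_v$, $\Pi=u$, $\varpi=\pi$), but the exponent is asserted rather than derived. The missing step is Proposition \ref{explicit basis of R}, which runs as follows.
First, Lemma \ref{explicit involution lemma} gives the eigencharacters on the dual basis $e'_{lk}$ of $M^t_E$ (Corollary \ref{dual action of u corollary}).
Next, an $x\in\mathcal{O}_{E_v}$ with all $\tau^i(x),\tau^i(\bar{x})$ distinct kills every $e_{ij}\otimes e'_{lk}$ except those with $i+l\equiv 2\pmod n$ and $j,k$ of opposite signature.
Then the $u$-relations identify the $n$ survivors for each such pair. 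Exactly one link, $e_{1j}\otimes e'_{1k}=\pi\,e_{2j}\otimes e'_{nk}$, costs a factor $\pi$.
The opposite-signature selection leaves precisely $pq=\frac{r^2}{4}$ rank-one directions, each contributing one $\pi$ after the $n$-fold tensor power and determinant.
Without this count, and in particular without the role of the signature, the exponent $\frac{r^2}{4}$ in the image is not established.
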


In both cases, the exponent of $\mathrm{Nm}(d_B)$ is the dimension of the corresponding Shimura variety. Note that when $B=\M_n(F)$ is trivial, we have already proved in construction process that such $\psi$ is an isomorphism. Thus, it remains to prove the main theorem when $B$ is nontrivial, and compares two metrics.

At the end of this subsection, we remind the reader that our main theorem is stated for principal polarizations. The following remark explains the effect that choosing a different polarization would have on the main theorem. 

\begin{remark}\label{polarization remark}
    When $\lambda$ is a polarization in general, note that Theorem \ref{deformation} needs some modification, since the morphism $\underline{\Omega}_\mathcal{A}^t\rightarrow\OA$ induced by $\lambda$  is not an isomorphism. However, since $\lambda$ is fixed in moduli interpretation, we can still compute the image in our main theorems using \cite[Prop 2.3.5.2]{Lan1}, except that there is an extra constant $c_\lambda\in\ZZ$ depending on the choice of $\lambda$, i.e., at generic fiber
    \begin{equation}
        c_\lambda=\deg(\lambda),
    \end{equation}
    such that the image in the main theorem is multiplied by $c_\lambda$ when replaced with the case of principal polarization. Moreover, this replacement is also valid for the comparison of metrics. Indeed, $\lambda$ is determined by the Riemann form of universal abelian scheme, especially, self-dual Riemann form gives principal polarization. From the discussion in Section \ref{Explicit connecting morphism} that follows, we can see that the choice of Riemann form affects the computations at archimedean places, and it is easy to see that the effect is consistent with the changes at the finite places.
\end{remark}

\subsection{Image over the integral model}\label{Image over the integral model}
In this subsection we prove the first part of our main theorems, i.e., we check the image over integral models. Before the formal discussion, note that we can make several reduction to simplify our proof. First, note that since $\XXK^{\sing}$ the singular locus has codimension 2 in $\XXK$, it suffices to check our statement over $\XXK^{\sm}$. Next, according to our previous method in Step 1 of construction of morphism between line bundles above, it loss no generality if we assume $F^+=\QQ$. This is obvious, and we also refer to \cite[Sec 3.4]{Guo1} for a proof for general $F^+$ in a certain case, where that proof can be generalized to PEL Shimura varieties combined with our later argument assuming $F^+=\QQ$. We should remark that most of the definitions and computations in this subsection are actually carried out in full generality of $F$; this assumption is made only to make things easier for the reader to understand. Finally, note that it is sufficient to compute the image locally at each non-archimedean place, hence we can only work on local fields.  

This subsection is organized as follows. We first give a brief introduction to cyclic algebra, which can be viewed as a generalization of the classical quaternion algebra. Then using these algebraic ingredients, we give an explicit description of the subsheaf $\mathcal{R}$ appearing in Theorem \ref{deformation}, thereby completing the proof of this part. This part of discussion is a generalization of \cite[Sec 3.2]{Yuan2} and \cite[Sec 3.4]{Guo1}. Finally, for a better understanding, we will give two explicit examples.

With these simplifications in place, we introduce the following notational conventions. Suppose $s\in S$ is a closed point above some finite place $v$. Denote the $\mathcal{O}_{S,s}$-modules
\begin{equation*}
    M=\OA\otimes_{\mathcal{O}_S}\mathcal{O}_{S,s},\quad M^t=\underline{\Omega}_{\mathcal{A}^t}\otimes_{\mathcal{O}_S}\mathcal{O}_{S,s},\quad R=\mathcal{R}\otimes_{\mathcal{O}_S}\mathcal{O}_{S,s}.
\end{equation*}
Then in order to prove the statement, we only need to compute the explicit expression of $R$.

\subsubsection*{Cyclic algebra}
Recall our Assumption \ref{restrictive assumption}, which implies that $B$ is a cyclic algebra of degree $n$ over $F$. Let $v$ be a non-archimedean place of $F$. Then it is sufficient to prove our main Theorem \ref{main1} and \ref{main2} locally at each $v$. Due to Morita equivalence, it is sufficient to consider the case that $B_v$ is a division algebra. Especially, it is clear that when $B_v$ is split, there is nothing new to prove. Note that by Proposition \ref{Albert proposition}, when $B_v$ is non-split, $v$ is split in $F/F^+$ if $[F:F^+]=2$.

We now give a concrete characterization of the division algebra $B_v$. The main reference is \cite[Chap 3]{Re}, where the author use the notation ``skewfield'' for division algebra.

\begin{definition}\label{cyclic algebra definition}
    The cyclic algebra
    \begin{equation*}
        B_v=(E_v/F_v,\tau,\pi)
    \end{equation*}
    is defined as
    \begin{equation*}
        B_v=\bigoplus_{i=0}^{n-1} E_v\cdot u^i,
    \end{equation*}
    with relations
    \begin{equation*}
        u^n=\pi,\quad ux=\tau(x)u\ (x\in E_v).
    \end{equation*}
    Here $E_v=F_v(\zeta)$, where $\zeta$ is a primitive $(q^n-1)$-th root of 1 and $q$ is the residue number of $F_v$. $\pi\in F_v$ is a generator of prime ideal, while $\tau$ is an automorphism of $E_v$ (preserving $F_v$) such that $\tau(\zeta)=\zeta^{q^f}$ for some integer $1\le f\le n$ with $(f,n)=1$. Moreover, the local invariant 
    \begin{equation*}
        \inv_v(B_v)=\frac{f}{n}.
    \end{equation*}
\end{definition}

Note that the division algebra over $F_v$ is determined uniquely by local invariant up to isomorphism. Thus, we will always assume $B_v=(E_v/F_v,\tau,\pi)$ in the later explicit computation. For convenience, we require $\pi^*=\pi$, i.e., $\pi$ is invariant under the conjugation of $F$.

On the one hand, by definition it is obvious that $E_v$ is the splitting field of $B_v$, i.e., 
\begin{equation*}
    B_v\otimes_{F_v} E_v\cong \M_n(E_v).
\end{equation*}
On the other hand, the following lemma compute the image of maximal order under such base change, which is a generalization of the first claim in \cite[Lemma 3.2]{Yuan2}.

\begin{lemma}\label{base change of maximal order}
    There is an $\mathcal{O}_{E_v}$-linear ring isomorphism
    \begin{equation*}
        \mathcal{O}_{B_v}\otimes_{\mathcal{O}_{F_v}} \mathcal{O}_{E_v}\longrightarrow
       \left( \begin{array}{ccccc}
                  \mathcal{O}_{E_v} &  \mathcal{O}_{E_v}  & \cdots & \mathcal{O}_{E_v} \\
                  \pi\mathcal{O}_{E_v} & \mathcal{O}_{E_v} & \cdots & \mathcal{O}_{E_v} \\
                  \vdots & \vdots & \vdots & \vdots \\
                  \pi\mathcal{O}_{E_v} & \pi\mathcal{O}_{E_v} & \cdots & \mathcal{O}_{E_v}
                \end{array}\right).
    \end{equation*}
\end{lemma}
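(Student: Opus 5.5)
The plan is to make the isomorphism explicit, using the standard description of the maximal order of a local division algebra. Since $B_v$ is a division algebra with valuation extending $v$, its unique maximal order is
\begin{equation*}
    \mathcal{O}_{B_v}=\bigoplus_{i=0}^{n-1}\mathcal{O}_{E_v}u^i ,
\end{equation*}
the valuation ring of $B_v$. Here $E_v/F_v$ is unramified of degree $n$ and $u$ is a uniformizer of $B_v$ with $u^n=\pi$. I would quote this from \cite[Chap 3, Thm 14.3]{Re}, or check directly that this lattice is a ring that is stable and contains every integral element.

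Next I would write down the splitting map. Let $\sigma$ be the generator of $\Gal(E_v/F_v)$ given by $\tau$, so that $\tau=\sigma$ up to the choice of power $f$; after renumbering, $\tau$ itself generates the cyclic group. There is then an $E_v$-algebra isomorphism
\begin{equation*}
    \mathcal{O}_{E_v}\otimes_{\mathcal{O}_{F_v}}\mathcal{O}_{E_v}\cong\prod_{j=0}^{n-1}\mathcal{O}_{E_v}, \qquad x\otimes y\mapsto(\tau^j(x)y)_j ,
\end{equation*}
which is an isomorphism over $\mathcal{O}_{E_v}$ because $E_v/F_v$ is unramified, so $\mathcal{O}_{E_v}$ is étale over $\mathcal{O}_{F_v}$. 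Let $x\in E_v$ act diagonally as $\mathrm{diag}(x,\tau(x),\dots,\tau^{n-1}(x))$, and let $u$ act as the matrix with $1$ on the superdiagonal and $\pi$ in the lower-left corner. Both relations $u^n=\pi$ and $ux=\tau(x)u$ are immediate, possibly after replacing $\tau$ by $\tau^{-1}$ in the diagonal ordering. This gives the usual isomorphism $B_v\otimes_{F_v}E_v\cong\M_n(E_v)$, which is injective by simplicity and bijective by dimension count.

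It remains to compute the image of the integral lattice. The image of $\mathcal{O}_{E_v}\otimes\mathcal{O}_{E_v}$ is exactly the diagonal $\prod\mathcal{O}_{E_v}$, by the étale isomorphism above. The image of $u^i$ for $1\le i\le n-1$ is the matrix with $1$'s on the $i$-th superdiagonal and $\pi$'s on the complementary subdiagonal, namely entries $(k,k+i-n)$. So $\bigoplus_i(\text{diag})\cdot u^i$ fills the $i$-th superdiagonal with $\mathcal{O}_{E_v}$ and the $(n-i)$-th subdiagonal with $\pi\mathcal{O}_{E_v}$. Summing over $i$ gives exactly the displayed standard hereditary order: $\mathcal{O}_{E_v}$ on and above the diagonal, $\pi\mathcal{O}_{E_v}$ below. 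Freeness of $\mathcal{O}_{B_v}$ over $\mathcal{O}_{E_v}$ lets us check this position by position. The map is $\mathcal{O}_{E_v}$-linear and multiplicative by construction, hence a ring isomorphism onto that order.

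The main obstacle is the first step: pinning down the correct maximal order and the precise matrix realization of $u$. The convention choices for the action of $\tau$, $\tau^f$ versus $\tau^{-1}$, must produce an upper-triangular rather than lower-triangular pattern, and I would adjust them by conjugating with a permutation matrix if needed. The rest is routine bookkeeping.
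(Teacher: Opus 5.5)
Your proposal is correct and follows the same route as the paper, which simply cites Reiner and writes down the same two matrices for $x\in\mathcal{O}_{E_v}$ and $u$. Your identification $\mathcal{O}_{B_v}=\bigoplus_i\mathcal{O}_{E_v}u^i$, the étale splitting of $\mathcal{O}_{E_v}\otimes_{\mathcal{O}_{F_v}}\mathcal{O}_{E_v}$, and the diagonal-by-diagonal computation of the image supply the details the paper leaves as ``clear''. Your hedges are unnecessary: with $x\mapsto\mathrm{diag}(x,\tau(x),\dots,\tau^{n-1}(x))$ and $u$ the superdiagonal matrix with $\pi$ in the lower-left corner, one checks $ux=\tau(x)u$ and $u^n=\pi$ directly, and the image is already upper triangular as displayed.
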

\begin{proof}
    We refer to \cite[Thm 14.6]{Re}. In fact, the key point is that under such base change,
    \begin{equation*}
        x\mapsto\left( \begin{array}{ccccc}
                  x &  0  & \cdots & 0 \\
                  0 & \tau(x) & \cdots & 0 \\
                  \vdots & \vdots & \vdots & \vdots \\
                  0 & 0 & \cdots & \tau^{n-1}(x)
                \end{array}\right)
    \end{equation*}
    for any $x\in\mathcal{O}_{E_v}\subset\mathcal{O}_{B_v}$, and
    \begin{equation*}
        u\mapsto\left( \begin{array}{ccccc}
                  0 &  1  & 0 & \cdots & 0 \\
                  0 & 0 & 1 & \cdots & 0 \\
                  \vdots & \vdots & \vdots & \vdots & \vdots \\
                  0 & 0 & 0 & \cdots & 1 \\
                  \pi & 0 &0 & \cdots & 0
                \end{array}\right).
    \end{equation*}
    Then the claim is clear.
\end{proof}

It is also important to understand the positive involution $*$ in PEL datum on $\mathcal{O}_{B_v}$, and we have the following lemma.

\begin{lemma}\label{explicit involution lemma}
    Under the injection $\mathcal{O}_{B_v}\hookrightarrow\M_n(\mathcal{O}_{E_v})$ in Lemma \ref{base change of maximal order}, the involution $*$ on $\mathcal{O}_{B_v}$ can be written as
    \begin{equation*}
        \beta^*=H^{-1}\tau(\bar{\beta}^t) H,\quad \beta\in\M_n(\mathcal{O}_{E_v}),
    \end{equation*}
    where 
    \begin{equation*}
        H=\left( \begin{array}{ccccc}
                  0 & 0  & \cdots &0 & 1 \\
                  0 & 0  & \cdots &1 & 0 \\
                  \vdots & \vdots & \vdots &\vdots & \vdots \\
                  0 & 1  & \cdots &0 & 0 \\
                  1 & 0  & \cdots &0 & 0
                \end{array}\right),
    \end{equation*}
    and $\bar{\cdot}: E_v\rightarrow E_v$ is an involution extending the standard conjugation $*:F_v\rightarrow F_v$ that preserves $F_v^+$.
\end{lemma}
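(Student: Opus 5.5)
The approach is to produce an explicit candidate anti-involution on $\M_n(E_v)$, check that it restricts to an involution of the image of $B_v$ (and of $\mathcal{O}_{B_v}$) with the right restriction to $F_v$, and then use Skolem--Noether to show that the involution $*$ of the PEL datum can be brought into this form.

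\emph{Step 1: the candidate.} Define $\sigma(\beta):=H^{-1}\tau(\bar\beta^t)H$ on $\M_n(E_v)$, where $\tau$ and $\bar\cdot$ act entrywise. Since $\tau$ and $\bar\cdot$ are field automorphisms of $E_v$, transpose is an anti-automorphism, and $H=H^{-1}=H^t$, the map $\sigma$ is a ring anti-automorphism of $\M_n(E_v)$ that is semilinear for $\tau\circ\bar\cdot$.

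\emph{Step 2: compatibility with $B_v$.} Recall that $B_v\subset\M_n(E_v)$ is the fixed locus of the Galois descent datum $\beta\mapsto P\,\tau(\beta)\,P^{-1}$, where $P$ is the image of $u$ from Lemma \ref{base change of maximal order}. I would check that $\sigma$ commutes with this descent datum, so that $\sigma(B_v)=B_v$.

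Rather than verify commutation abstractly, it is cleanest to compute on the generators $x\in E_v$ and $u$:
\begin{itemize}
\item For $x\in E_v$, the image $\mathrm{diag}(x,\tau x,\dots,\tau^{n-1}x)$ goes under $\sigma$ to a diagonal matrix. Conjugation by $H$ reverses the diagonal order, and the $\tau$-twist realigns it, so the result should be the image of a conjugate of $x$.
\item For $u$, the cyclic shift matrix with $\pi$ in the corner goes to $H^{-1}P^tH$. Since $H$ reverses indices, this is again $P$ up to the corner placement, so $\sigma(u)$ lies in $\mathcal{O}_{B_v}$ and is a uniformizer. This uses $\pi^*=\pi$.
\item The relations $u^n=\pi$ and $ux=\tau(x)u$ are then checked to be sent to the reversed relations.
\end{itemize}
This shows that $\sigma|_{B_v}$ is an anti-automorphism of $B_v$ extending the given conjugation on $F_v$. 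Applying $\sigma$ twice gives $H\,\tau^2(\beta)\,H$-type conjugations, and after restricting to $B_v$ via the descent datum one should get $\sigma^2=\id$.

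\emph{Step 3: preservation of the order.} The order $\mathcal{O}_{B_v}\otimes\mathcal{O}_{E_v}$ consists of matrices with $\mathcal{O}$ on and above the diagonal and $\pi\mathcal{O}$ strictly below. Transposing moves the $\pi\mathcal{O}$ entries above the diagonal, and conjugating by the antidiagonal $H$ moves them back below. So $\sigma$ preserves this order, hence also $\mathcal{O}_{B_v}$, as the order is the unique maximal one.

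\emph{Step 4: uniqueness.} Any two involutions of $B_v$ with the same restriction to $F_v$ differ, by Skolem--Noether, by an inner automorphism: $\beta^*=c^{-1}\sigma(\beta)c$ with $\sigma(c)=\pm c$ up to scalars. If both preserve $\mathcal{O}_{B_v}$, then $c$ normalizes the unique maximal order, so $c\in F_v^\times u^{\ZZ}\mathcal{O}_{B_v}^\times$. Changing the presentation $(E_v,\tau,\pi)$ of the cyclic algebra, that is, re-choosing $\zeta$ and $u$ inside $B_v$ via a unit and a power of $u$, absorbs $c$. Hence after a suitable choice of the isomorphism in Lemma \ref{base change of maximal order}, $*$ equals $\sigma$. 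Here one also invokes Proposition \ref{Albert proposition} to know that $*$ has the correct kind, so that the involution $\bar\cdot$ on $E_v$ exists and is compatible with $\tau$. In the type A case this uses that $v$ splits in $F/F^+$.

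The main obstacle is Step 4, specifically showing that the twisting element $c$ can always be absorbed by a re-choice of presentation, in particular getting the symmetric rather than skew case $\sigma(c)=c$. I would first try to reduce this to the explicit description of positive involutions in \cite[Thm 2]{Mum} together with the local uniqueness of the maximal order.
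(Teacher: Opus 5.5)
There is a genuine gap, and it is in Step 2. You argue that conjugation by $H$ reverses the diagonal and the $\tau$-twist ``realigns'' it. This is false once $n\ge 3$. For $x\in E_v$ one gets
\[
\sigma\bigl(\mathrm{diag}(x,\tau x,\dots,\tau^{n-1}x)\bigr)=H\,\mathrm{diag}(\tau\bar x,\tau^{2}\bar x,\dots,\tau^{n}\bar x)\,H=\mathrm{diag}(\bar x,\tau^{-1}\bar x,\dots,\tau^{-(n-1)}\bar x).
\]
This is exactly the reversed twist recorded in Corollary \ref{dual action of u corollary}. It lies in the image of $E_v$ under Lemma \ref{base change of maximal order} only if $\tau^{2}=\mathrm{id}$, i.e.\ $n\le 2$. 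Since also $\sigma(u)=u$, what $\sigma$ really does is carry $(E_v/F_v,\tau,\pi)$ onto the subalgebra generated by the matrices $\mathrm{diag}(y,\tau^{-1}y,\dots)$ and $u$. That subalgebra is a copy of $(E_v/F_v,\tau^{-1},\pi)\cong B_v^{\mathrm{op}}$.

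No other choice of $H$ or $\bar\cdot$ can fix this. An anti-automorphism of $B_v$ would identify $B_v^{\mathrm{op}}$ with a twist of $B_v$ by an automorphism of $F_v$. Such twists preserve local invariants, so this would force $2\,\mathrm{inv}_v(B_v)=0$, whereas $\mathrm{inv}_v(B_v)=f/n$ has order $n$. Moreover, consider type A, the only type where $n\ge 3$ occurs and the case the lemma is used for. There Proposition \ref{Albert proposition} forces $v\ne\bar v$ whenever $B_v$ is non-split. So $*$ interchanges $B_v$ and $B_{\bar v}$ and is not an involution of $B_v$ at all. Hence three parts of your argument have nothing to act on: the conclusion of Step 2, the ``hence also $\mathcal{O}_{B_v}$'' in Step 3, and the Skolem--Noether comparison of two involutions of $B_v$ in Step 4.

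What survives is the core of your Step 3: $\sigma$ preserves the hereditary order $\mathcal{O}_{B_v}\otimes\mathcal{O}_{E_v}\subset\M_n(\mathcal{O}_{E_v})$. That is the constraint the paper's argument actually computes with. The paper works inside the split algebra $\M_n(E_v)$ and writes $*$ as an inner twist $\beta\mapsto H^{-1}\tau(\bar\beta^t)H$ of a standard positive involution. It uses preservation of this hereditary order to force $H$ into the Hankel shape with entries $h_i$ and $\pi h_i$, then normalizes to the antidiagonal $H$. Afterwards the formula is only evaluated on $x$ and $u$ acting on $M^t_E$, which is where the reversed twist $\tau^{n+1-i}$ enters.

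Even when $n\le 2$ and $*$ genuinely acts on $B_v$ (type C), Step 4 is asserted rather than proved. The observation that $c$ normalizes the maximal order is vacuous: every element of $B_v^\times=u^{\ZZ}\mathcal{O}_{B_v}^\times$ normalizes the valuation ring $\mathcal{O}_{B_v}$. Re-choosing the presentation can absorb $c$ only if $*$ and $\sigma$ are conjugate involutions, and that depends on the class of $*$.

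For example, take $n=2$, $E_v=F_v(\sqrt d)$, $v\nmid 2$ and $\bar\cdot=\mathrm{id}$. Then $\sigma|_{E_v}=\mathrm{id}$ and $\sigma(u)=u$, so $\sigma=\mathrm{Int}(\sqrt d\,u)\circ(\text{canonical involution})$. This is an orthogonal involution of discriminant $(\sqrt d\,u)^2=-d\pi$. Any re-choice of $(\zeta,u)$ keeps $\tau$ and replaces $\pi$ by a unit multiple. So with $\bar\cdot=\mathrm{id}$ the discriminant of $\sigma$ always has odd valuation, and an orthogonal involution of unit discriminant is never conjugate to it. Reaching it requires switching to $\bar\cdot=\tau$, which gives $\mathrm{Int}(\sqrt d)\circ(\text{canonical involution})$. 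So this step needs an actual classification of the involutions in question, not just an appeal to changing the presentation.
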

\begin{proof}
    Note that a classical result states that any two positive involutions on $B$ always differ by an inner conjugation, which can be checked from Proposition \ref{Albert proposition} or \cite[Lem 2.11]{Kot}. Since $\beta\rightarrow\bar{\beta}^t$ is a positive involution, we can indeed write the positive involution $*$ in the form given in the statement. Next, by Definition \ref{PEL type O-lattice}, we only need to observe that $*$ is required to preserve $\mathcal{O}_{B_v}$. Combining this with the previous lemma and a straightforward computation, it is not hard to check that $H$ must be of the form
    \begin{equation*}
        \left( \begin{array}{ccccc}
                  \pi h_2 & \pi h_3  & \cdots &\pi h_n & h_1 \\
                  \pi h_3 & \pi h_4  & \cdots &h_1 & h_2 \\
                  \vdots & \vdots & \vdots &\vdots & \vdots \\
                  \pi h_n & h_1  & \cdots &h_{n-2} & h_{n-1} \\
                  h_1 & h_2  & \cdots &h_{n-1} & h_n
                \end{array}\right),
    \end{equation*}
    where $h_1\cdots, h_n\in\mathcal{O}_{E_v}$. Thus, we can choose $H$ in our statement.
    
    Note that there is no essential difference if we replace $\tau(\bar{\beta}^t)$ by $\tau^i(\bar{\beta}^t)$ for general $i$, this expression is chosen to make the subsequent computations involving $R$ and $R_E$ in Proposition \ref{explicit basis of R} more convenient to write. Similarly, one can also choose a different $H$ to proceed the computation, and there is no essential difference either. 
\end{proof}

\subsubsection*{Explicit expression of $\mathcal{R}$}
Now we are ready to give a more explicit characterization of subsheaf $\mathcal{R}$ in Theorem \ref{deformation}, or equivalently the $\mathcal{O}_{S,s}$-module $R$. Once again, we divide the discussion into two cases according to the type of PEL Shimura datum.

In the case of type C, Proposition \ref{Albert proposition} implies that when $B_v$ is a division algebra over $F_v$, $B_v$ is either trivial or a quaternion algebra. Both cases are included in \cite{Guo1} when $r=1$. For $r$ in general, it is not hard to prove the statement using discussion of type A case, so we omit the proof in this case. Note that our discussion is a little bit different with the proof in \cite{Yuan2} and \cite{Guo1}, hence for a better understanding, we also refer to Example \ref{type C example} for an explicit example when $B_v$ is a division quaternion algebra over $F_v$.

It remains to consider the case of type A. Note that $M$ and $M^t$ are both free $\mathcal{O}_{S,s}$-module of rank $rn$, and endowed with an action of $\mathcal{O}_{B_v}$ satisfying the determinant condition. Here $\mathcal{O}_{S,s}$ is a flat noetherian local ring over $F_v$. In order to split $M$ and $M^t$, in the remaining of this subsection we will always consider their base change over $\mathcal{O}_{S,s}\otimes \mathcal{O}_{E_v}$, and we denote these two module after base change by $M_E$ and $M^t_E$. Without loss of generality, we further assume 
\begin{equation*}
    \mathcal{O}_{S,s}\otimes \mathcal{O}_{E_v}=\mathcal{O}_{E_v}
\end{equation*}
for convenience.

The following lemma constructs an explicit basis of a $\mathcal{O}_{B_v}$-module $N$ of rank 1 satisfying determinant condition after a base change over $\mathcal{O}_{B_v}\otimes_{\mathcal{O}_{F_v}}\mathcal{O}_{E_v}$. This lemma helps us give a description of $M_E$ and $M^t_E$ as $\mathcal{O}_{B_v}\otimes_{\mathcal{O}_{F_v}}\mathcal{O}_{E_v}$-modules.

\begin{lemma}\label{explicit basis lemma}
    Suppose $N$ is a (left) $\mathcal{O}_{B_v}$-module of rank 1 satisfying determinant condition, and denote by $N_E$ its base change over $\mathcal{O}_{B_v}\otimes_{\mathcal{O}_{F_v}}\mathcal{O}_{E_v}$. Then there exists a unique pair of integer 
    \begin{equation*}
        (p,q),\quad p+q=n
    \end{equation*}
    determined by the determinant condition, and there exists a basis $\{e_{ij}\}_{1\le i,j\le n}$ of $N_E$ over $\mathcal{O}_{E_v}$, such that under this basis any element $x$ of $N_E$ can be written as a matrix $(x_{ij})\in\M_n(\mathcal{O}_{E_v})$, and the action of $(a_{ij})\in\mathcal{O}_{B_v}\otimes_{\mathcal{O}_{F_v}}\mathcal{O}_{E_v}\subset\M_n(\mathcal{O}_{E_v})$ on $x$ is given by
    \begin{equation*}
        (a_{ij})\cdot(x_{ij})=\left( \left\{
    \begin{aligned}
        \nonumber
        &\sum_{s=1}^n a_{is}x_{sj} \ \ \  (1\le j\le p)\\
        &\sum_{s=1}^n \bar{a}_{is}x_{sj} \ \ \  (p+1\le j \le n)
    \end{aligned}
    \right. \right) .
    \end{equation*}
    Here $\bar{\cdot}:E_v\rightarrow E_v$ is an involution extending the conjugation $*:F_v\rightarrow F_v$. In other words, this action is given by matrix multiplication on the first $p$ columns of $(x_{ij})$, and by matrix multiplication after taking conjugation $\bar{\cdot}$ of each element on the last $q$ columns of $(x_{ij})$.
\end{lemma}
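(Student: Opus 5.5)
The plan is to use the structure of $\mathcal{O}_{B_v}\otimes_{\mathcal{O}_{F_v}}\mathcal{O}_{E_v}$ from Lemma \ref{base change of maximal order}, together with the splitting of the base field. Recall that in type A, for $B_v$ non-split, $v$ is split in $F/F^+$. Hence $F_v$ should be understood as $F^+_{v}\otimes_{F^+}F\cong F^+_{v}\times F^+_{v}$, and the conjugation $*$ swaps the two factors. Over $\mathcal{O}_{E_v}$, the ring $\mathcal{O}_{B_v}\otimes\mathcal{O}_{E_v}$ then acts on $N_E$ through two ``embeddings'': the identity one and the conjugate one $a\mapsto\bar a$. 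So the first step is to decompose $N_E=N_E^{(1)}\oplus N_E^{(2)}$ by the idempotents of $\mathcal{O}_{F_v}\otimes_{\mathcal{O}_{F_v^+}}\mathcal{O}_{E_v}$. On $N_E^{(1)}$ the action is via the matrix order $\Lambda:=\mathcal{O}_{B_v}\otimes\mathcal{O}_{E_v}\subset\M_n(\mathcal{O}_{E_v})$ in the standard way. On $N_E^{(2)}$ it is via $a\mapsto\bar a$ composed with the same inclusion. The determinant condition of \cite{Kot} fixes the ranks of these two pieces over $\mathcal{O}_{E_v}$: they are $np$ and $nq$ for a unique pair $(p,q)$ with $p+q=n$, read off from $\tr(b\,|\,V_1)$. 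This gives existence and uniqueness of $(p,q)$.

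The second step is to describe each piece as a module over the hereditary order $\Lambda$. By Lemma \ref{base change of maximal order}, $\Lambda$ is the standard maximal hereditary (Iwahori-type) order in $\M_n(\mathcal{O}_{E_v})$. Its indecomposable lattices are the columns $P_k=(\mathcal{O},\dots,\mathcal{O},\pi\mathcal{O},\dots,\pi\mathcal{O})^t$, and every $\Lambda$-lattice is a direct sum of these (\cite[Thm 21.4]{Re}, as in Remark \ref{assumption order remark}). Since $N$ is free of rank $1$ over $\mathcal{O}_{B_v}$, the module $N\otimes\mathcal{O}_{E_v}$ is free of rank $1$ over $\Lambda\otimes(\text{two factors})$. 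Equivalently, the whole of $N_E$ is a copy of $\Lambda$ split along the two factors.

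The step I expect to be the main obstacle is reconciling the claimed shape with the lattice structure. We need $p$ columns coming from $N_E^{(1)}$ and $q$ columns from $N_E^{(2)}$, and these must assemble into a single matrix $(x_{ij})\in\M_n(\mathcal{O}_{E_v})$ with coordinates in $\mathcal{O}_{E_v}$, rather than in a twisted lattice with $\pi$'s. The plan is to use the freedom to rescale basis vectors within each column by powers of $\pi$: for $\Lambda$-lattices, $P_k\cong\pi^{-1}P_{k'}$ after reindexing rows. This lets me choose $e_{ij}$ so that each column sits as the standard lattice of the appropriate shape and the action is literally matrix multiplication. I will also use that $\mathcal{O}_{S,s}\otimes\mathcal{O}_{E_v}=\mathcal{O}_{E_v}$ is assumed, so all the lattices involved are free over a DVR.

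Finally, I will verify the formula directly on the generators $x\in\mathcal{O}_{E_v}$ and $u$ from Lemma \ref{base change of maximal order}: $x$ acts on each column by $\mathrm{diag}(\tau^i x)$ or by its conjugate, and $u$ acts as the $\pi$-shifted cyclic matrix. Checking these two generators suffices to establish the stated action. I will also confirm that choosing which $p$ columns carry the untwisted action is harmless, since columns can be permuted by reindexing $j$.
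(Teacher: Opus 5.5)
Your plan has a genuine gap. The rescaling step that is meant to overcome your ``main obstacle'' does not work, and your own intermediate description of $N_E$ is incompatible with the normal form you are trying to reach.

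\textbf{Why rescaling fails.} The claim ``$P_k\cong\pi^{-1}P_{k'}$ after reindexing rows'' is false. The column lattices $P_1,\dots,P_n$ of the Iwahori order $\Lambda$ are pairwise non-isomorphic $\Lambda$-modules. A $\Lambda$-linear map $P_k\to P_{k'}$ becomes an $\M_n(E_v)$-endomorphism of $E_v^n$ after tensoring with $E_v$, so it is multiplication by a scalar $c\in E_v$. The condition $cP_k=P_{k'}$ then forces $k=k'$.

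Multiplying basis vectors of $P_k$ by powers of $\pi$ does not change the module. It only rewrites the action of $a$ as $D^{-1}aD$ with $D=\mathrm{diag}(1,\dots,1,\pi,\dots,\pi)$, which is not multiplication by $(a_{ij})$. Your final generator check would expose this. For $k<n$, take the basis $e_1,\dots,e_k,\pi e_{k+1},\dots,\pi e_n$ of $P_k$. In it, $u$ acts by a cyclic matrix whose entry $\pi$ sits at position $(k,k+1)$ instead of $(n,1)$.

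Reindexing rows does not help either. No nontrivial permutation matrix normalizes $\Lambda$. The shift that does normalize it is the image $\Pi$ of $u$, and $\Pi$ carries $P_k$ to $P_{k-1}$ only after twisting the action by $\Ad(\Pi)$. That twist is not allowed, because the statement requires one fixed matrix $(a_{ij})$ to act on every column.

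\textbf{Incompatibility with the target.} The stated normal form says exactly that $N_E^{(1)}\cong P_n^{\oplus p}$ and that $N_E^{(2)}$ is $q$ copies of the standard lattice $\mathcal{O}_{E_v}^n$ with conjugated action. In other words, every column is the standard lattice with literal matrix multiplication. You instead derive that $N_E$ is ``a copy of $\Lambda$ split along the two factors''. Since $\Lambda\cong P_1\oplus\cdots\oplus P_n$, that would put a set of $p$ \emph{distinct} $P_k$'s into $N_E^{(1)}$. No choice of basis turns this into $P_n^{\oplus p}$ once $p\ge 2$, and in general not even for $p=1$. Separately, ``free of rank one over $\Lambda\otimes(\text{two factors})$'' would give $\mathcal{O}_{E_v}$-rank $2n^2$, not $np+nq=n^2$.

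\textbf{How the paper proceeds.} The paper uses the determinant condition to fix $(p,q)$, as you do. It then invokes Morita equivalence to reduce to $n=1$. That reduction is precisely the assertion that each piece has the form $W\otimes\mathcal{O}_{E_v}^n$ with standard columns, i.e.\ that it is a module over the full matrix ring $\M_n(\mathcal{O}_{E_v})$. The case $n=1$ is then just the signature count.

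\textbf{What your route is missing.} You need an input of that kind: a reason why only the standard column $P_n$ occurs, equivalently why the $\Lambda$-action on each piece extends to $\M_n(\mathcal{O}_{E_v})$. The hereditary classification with ``free of rank one'' as input points the other way. At best it yields a twisted normal form in which the $\pi$ in the action of $u$ sits in a column-dependent position.
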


\begin{proof}
    Since $N$ satisfies determinant condition, by Remark \ref{Determinant condition remark} the signature of $N$ under the Hermitian pairing induced by alternating pairing is uniquely determined. This gives the unique pair $(p,q)$ with $p+q=n$. Then the remaining part of the statement is straightforward. Indeed, after the base change, using Morita equivalence, we can easily reduce to the case $n=1$. The case $n=1$ then follows directly from the definition of the signature condition.
\end{proof}

The following corollary can be checked directly from this lemma, which is crucial in the definition of $R$.
\begin{corollary}\label{action of u corollary}
    For the basis $\{e_{ij}\}_{1\le i,j\le n}$ of $N_E$ defined in previous lemma, we have
    \begin{equation*}
        x\cdot e_{ij}=\left\{
    \begin{aligned}
        \nonumber
        &\tau^{i-1}(x) e_{ij}, \ \ \  (1\le j\le p)\\
        &\tau^{i-1}(\bar{x}) e_{ij}, \ \ \  (p+1\le j \le n)
    \end{aligned}
    \right. \quad \forall x\in\mathcal{O}_{E_v}\subset\mathcal{O}_{B_v},
    \end{equation*}
    and
    \begin{equation*}
        u\cdot e_{ij}=\left\{
    \begin{aligned}
        \nonumber
        &\pi e_{nj} \ \ \  (i=1)\\
        &e_{(i-1)j} \ \ \  (2\le i \le n)
    \end{aligned}
    \right. .
    \end{equation*}
\end{corollary}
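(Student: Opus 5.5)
The corollary follows from Lemma \ref{explicit basis lemma} once we know what matrices the elements $x\in\mathcal{O}_{E_v}$ and $u$ become under Lemma \ref{base change of maximal order}. I would first identify $e_{ij}$ with the matrix unit $E_{ij}\in\M_n(\mathcal{O}_{E_v})$: the element $e_{ij}$ corresponds to the matrix $(x_{st})$ with $x_{ij}=1$ and all other entries zero. Then I would compute the action of an element $(a_{st})$ on $E_{ij}$ using the formula of Lemma \ref{explicit basis lemma}, separately for the columns $j\le p$ and $j\ge p+1$.

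For $x\in\mathcal{O}_{E_v}$, the proof of Lemma \ref{base change of maximal order} gives the image $\mathrm{diag}(x,\tau(x),\dots,\tau^{n-1}(x))$. Left multiplication of $E_{ij}$ by this diagonal matrix picks out the $i$-th diagonal entry, giving $\tau^{i-1}(x)E_{ij}$.

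For columns $j\ge p+1$ the entries are conjugated first, so the result is $\overline{\tau^{i-1}(x)}\,E_{ij}$. To obtain the stated form $\tau^{i-1}(\bar x)$, I need $\bar{\cdot}$ to commute with $\tau$. This is the one point requiring justification. I would argue it as follows:
\begin{itemize}
\item $E_v/F_v$ is unramified, hence cyclic with Frobenius generator.
\item $\tau$ is a power of Frobenius.
\item Any field automorphism of $E_v$ preserving $F_v$ (such as the extension $\bar{\cdot}$ of $*$) normalizes $\mathrm{Gal}(E_v/F_v)$ and fixes Frobenius, since Frobenius is characterized by $\zeta\mapsto\zeta^{q}$ on roots of unity and $\bar{\cdot}$ preserves the residue field size.
\end{itemize}
Hence $\bar{\cdot}$ commutes with $\tau$. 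In the nonsplit case $v$ is split in $F/F^+$ (remark after Proposition \ref{Albert proposition}), so one may simply choose $\bar{\cdot}$ to commute.

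For $u$, the image is the matrix $U$ with $U_{s,s+1}=1$ for $1\le s\le n-1$, $U_{n,1}=\pi$, and all other entries zero. Then $UE_{ij}=\sum_s U_{si}E_{sj}$. If $i\ge2$, the only nonzero $U_{si}$ is $U_{i-1,i}=1$, giving $E_{(i-1)j}$. If $i=1$, the only nonzero entry is $U_{n1}=\pi$, giving $\pi E_{nj}$.

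For the conjugated columns the result is the same: all entries of $U$ lie in $\{0,1,\pi\}$, and $\bar\pi=\pi$ by the convention $\pi^*=\pi$. So the formula for $u$ holds for every $j$.

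The main obstacle is the commutation of $\tau$ with $\bar{\cdot}$ in the $x$-formula. Everything else is routine matrix-unit bookkeeping.
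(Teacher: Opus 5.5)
Your proposal is correct and is essentially the paper's argument. The paper only asserts that the corollary is a direct check from Lemma \ref{explicit basis lemma}, using the images of $x$ and $u$ given in Lemma \ref{base change of maximal order}, and that is exactly the matrix-unit computation you carry out; your justification that $\bar{\cdot}$ commutes with $\tau$ (so that $\overline{\tau^{i-1}(x)}=\tau^{i-1}(\bar{x})$ on the last $q$ columns) fills in a point the paper leaves implicit.
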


In general, denote by $(p,q)$ the signature of $M$ with $p+q=r$, then there exists a basis of $M_E$ following Lemma \ref{explicit basis lemma}. More precisely, under this basis any element $x\in M_E$ is represented by 
\begin{equation*}
    (x_{ij})\in \M_{n,r}(\mathcal{O}_{E_v}),
\end{equation*}
such that the action of $(a_{ij})$ is given by matrix multiplication on the first $p$ columns of $(x_{ij})$, and by matrix multiplication after taking conjugation of each element on the last $q$ columns of $(x_{ij})$. For convenience, we denote this basis by 
\begin{equation*}
    \{e_{ij}\},\quad 1\le i\le n, 1\le j\le r.
\end{equation*}
We also say $e_{ij}$ and $e_{i'j'}$ has the same signature if the actions of $\mathcal{O}_{B_v}\otimes_{\mathcal{O}_{F_v}}\mathcal{O}_{E_v}$ on them are both by matrix multiplication or matrix multiplication after taking conjugation.

Now we consider the $\mathcal{O}_{B_v}$-module structure of $M^t$. Suppose \begin{equation*}
    \{e'_{ij}\},\quad 1\le i\le n, 1\le j\le r
\end{equation*}
is the dual basis of $M^t_E$ under principal polarization $\lambda$. Because of the condition on Rosati involution, the action of $\mathcal{O}_{B_v}$ on $M^t$ is a composition of involution $*$ and the action on $M$. Then the signature of $M^t$ is exactly $(q,p)$. For convenience, under this dual basis, we will write those elements of $M^t_E$ as
\begin{equation*}
    (x'_{ij})\in \M_{n,r}(\mathcal{O}_{E_v}).
\end{equation*}
The following corollary shows the explicit $\mathcal{O}_{B_v}$-module structure of $M^t$ explicitly, which can be checked directly from Lemma \ref{explicit involution lemma}.
\begin{corollary}\label{dual action of u corollary}
    For the dual basis $\{e'_{ij}\}_{1\le i\le n,1\le j\le r}$ of $M^t_E$, we have
    \begin{equation*}
        x\cdot e'_{ij}=\left\{
    \begin{aligned}
        \nonumber
        &\tau^{n+1-i}(\bar{x}) e'_{ij}, \ \ \  (1\le j\le p)\\
        &\tau^{n+1-i}(x) e'_{ij}, \ \ \  (p+1\le j \le r)
    \end{aligned}
    \right. \quad \forall x\in\mathcal{O}_{E_v}\subset\mathcal{O}_{B_v},
    \end{equation*}
    and
    \begin{equation*}
        u\cdot e'_{ij}=\left\{
    \begin{aligned}
        \nonumber
        &\pi e'_{nj} \ \ \  (i=1)\\
        &e'_{(i-1)j} \ \ \  (2\le i \le n)
    \end{aligned}
    \right. .
    \end{equation*}
\end{corollary}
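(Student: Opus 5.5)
The plan is to transport the $\mathcal{O}_{B_v}$-action from $M_E$ to $M^t_E$ through the principal polarization $\lambda$, and then evaluate it with the explicit involution of Lemma \ref{explicit involution lemma}. By the Rosati condition $\mathrm{Ros}_\lambda(i(\beta))=i(\beta^*)$, the action of $\beta$ on $M^t_E$ is, under $\lambda$, the adjoint of the action of $\beta^*$ on $M_E$ with respect to the perfect pairing $M_E\times M^t_E\to\mathcal{O}_{E_v}$ that defines the dual basis $\{e'_{ij}\}$. So the first step is to write down this pairing in the coordinates $(x_{ij})$ and $(x'_{ij})$. I expect it to be the standard pairing composed with the index reversal $i\mapsto n+1-i$ encoded by $H$, column by column. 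Concretely, the pairing should be $\langle x,x'\rangle=\sum_j (\text{column } j \text{ of } x)^t H\,(\text{column } j \text{ of } x')$. The conjugation on the last $q$ columns is built into the module structure of Lemma \ref{explicit basis lemma}. This is also why the signature of $M^t_E$ comes out as $(q,p)$.

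The second step computes $\beta^*=H^{-1}\tau(\bar\beta^t)H$ for the two generators. For $x\in\mathcal{O}_{E_v}$, Lemma \ref{base change of maximal order} gives $x\mapsto\mathrm{diag}(x,\tau(x),\dots,\tau^{n-1}(x))$. Conjugating and applying $\tau$ gives $\mathrm{diag}(\tau(\bar x),\dots,\tau^{n}(\bar x))$, and since $\tau^n=1$ the last entry is $\bar x$. Conjugating by the antidiagonal $H$ reverses the diagonal, so the $i$-th entry becomes $\tau^{n+1-i}(\bar x)$. For $u$, whose matrix has $1$'s on the superdiagonal and $\pi$ in position $(n,1)$, $\tau$ fixes the entries because $\pi\in F_v$ and $\pi^*=\pi$. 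The transpose moves the $1$'s to the subdiagonal and $\pi$ to $(1,n)$, and the reversal by $H$ moves them back. Hence $u^*=u$ on the nose.

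The third step combines the two. Let $\beta^*$ act on $M_E$ column by column as in Lemma \ref{explicit basis lemma}: by plain multiplication on the first $p$ columns and after conjugation on the last $q$. Take the adjoint with respect to the pairing from the first step; the transposition and the $H$-reversal in the pairing cancel against those in $\beta^*$. For $x$, the first $p$ columns pick up $\tau^{n+1-i}(\bar x)$ and the last $q$ columns pick up $\tau^{n+1-i}(x)$, because the extra conjugation cancels. For $u$, the adjoint of $u^*=u$ relative to an $H$-twisted pairing is again the shift $e'_{ij}\mapsto e'_{(i-1)j}$ with $e'_{1j}\mapsto\pi e'_{nj}$. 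This gives Corollary \ref{dual action of u corollary}, and the consistency check against Corollary \ref{action of u corollary} is immediate.

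The main obstacle is bookkeeping rather than ideas. One must pin down exactly how the dual basis under $\lambda$ is indexed, that is, whether the reversal $i\mapsto n+1-i$ sits in the pairing or in the basis. With the wrong convention one gets $e'_{(i+1)j}$ and exponent $\tau^{i-1}$ instead of the stated formulas. I would fix the convention by demanding that it reproduce the swap of signature $(p,q)\mapsto(q,p)$ and the shift $u\cdot e'_{ij}=e'_{(i-1)j}$, and then verify the $x$-action against it. I would also check the case $n=2$ by hand against Example \ref{type C example}.
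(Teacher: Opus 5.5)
Your second step is the whole of the paper's proof, which only checks $u^*=u$ and that $*$ reverses the diagonal of $x$, so that the $i$-th entry of $x^*$ is $\tau^{n+1-i}(\bar x)$. The gap is in how you pass from $\beta^*$ on $M_E$ to $\beta$ on $M^t_E$.

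No perfect pairing $M_E\times M^t_E\to\mathcal{O}_{E_v}$ is involved; under the de Rham pairing, $\underline{\Omega}_{\mathcal{A}}$ and $\underline{\Omega}_{\mathcal{A}^t}$ are orthogonal. The ``dual basis under $\lambda$'' is the transport of $\{e_{ij}\}$ through the isomorphism $\lambda^*:\underline{\Omega}_{\mathcal{A}^t}\to\underline{\Omega}_{\mathcal{A}}$. Pulling back the Rosati identity $i(\beta)^t\circ\lambda=\lambda\circ i(\beta^*)$ gives $\lambda^*\circ(i(\beta)^t)^*=i(\beta^*)^*\circ\lambda^*$. So $\beta$ acts on $e'_{ij}$ exactly as $\beta^*$ acts on $e_{ij}$, with no transpose; this is the paper's ``composition of $*$ and the action on $M$''. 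The corollary is then read off from Corollary \ref{action of u corollary}. The $u$-formula holds because $u^*=u$. The $x$-eigenvalue on $e'_{ij}$ is the $i$-th diagonal entry $\tau^{n+1-i}(\bar x)$ of $x^*$, conjugated once more on the last $q$ columns.

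Your adjoint construction does not give this. On the first $p$ columns, the $H$-adjoint of $\beta^*=H\tau(\bar\beta^t)H$ is $H(\beta^*)^tH=\tau(\bar\beta)$. The two reversals do cancel, as you say, but then $x\cdot e'_{ij}=\tau^{i}(\bar x)e'_{ij}$, not $\tau^{n+1-i}(\bar x)e'_{ij}$; only the $u$-formula comes out right.

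Choosing a different pairing cannot fix this. The map $\beta\mapsto\rho(\beta^*)^{\mathrm{adj}}$ is always multiplicative. The stated formulas, with $\bar\cdot$ commuting with $\tau$ as in Corollary \ref{action of u corollary}, give $u\cdot(x\cdot e'_{ij})=\tau^{n+1-i}(\bar x)e'_{(i-1)j}$ but $\tau(x)\cdot(u\cdot e'_{ij})=\tau^{n+3-i}(\bar x)e'_{(i-1)j}$. As a left action they are therefore compatible with $ux=\tau(x)u$ only when $\tau^2=1$, i.e.\ $n\le 2$. The direct transport is anti-multiplicative, which is what the balanced relation $(i(\beta)^*u)\otimes v-u\otimes(i(\beta^*)^*v)$ defining $\mathcal{R}$ requires.

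Your planned $n=2$ check would have caught the problem. Your convention swaps the two $x$-eigenvalues, so $x',y'$ would not carry the same module structure as $x,y$, contrary to Example \ref{type C example}.
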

\begin{proof}
    One only needs to check that $u^*=u$ and
    \begin{equation*}
        \left( \begin{array}{ccccc}
                  x &  0  & \cdots & 0 \\
                  0 & \tau(x) & \cdots & 0 \\
                  \vdots & \vdots & \vdots & \vdots \\
                  0 & 0 & \cdots & \tau^{n-1}(x)
                \end{array}\right)^*=\left( \begin{array}{ccccc}
                  \tau^n(x) &  0  & \cdots & 0 \\
                  0 & \tau^{n-1}(x) & \cdots & 0 \\
                  \vdots & \vdots & \vdots & \vdots \\
                  0 & 0 & \cdots & \tau(x)
                \end{array}\right).
    \end{equation*}
\end{proof}

Note that in Lemma \ref{deformation}, $\mathcal{O}_B\otimes\mathcal{O}_{\mathcal{X}^\sm}$ structure on the second $\OA$ has   opposite signature as the first $\OA$, hence it is more natural to regard $R$ as a sub-module of $M\times M^t$. Combining these explicit constructions, now we are ready to give an explicit description of $R_E$ viewed as a sub-module of $M\times M^t$.

\begin{proposition}\label{explicit basis of R}
    Keep all the notations as above. Then the quotient module $(M_E\times M^t_E)/R_E$ is generated by 
    \begin{equation*}
        e_{ij}\otimes e'_{(n+2-i)k},\quad 1\le j\le p<k\le r\ \mathrm{or}\ 1\le k\le p<j\le r,
    \end{equation*}
    such that
    \begin{equation*}
        e_{1j}\otimes e'_{1k}=\pi e_{2j}\otimes e'_{nk},\quad e_{2j}\otimes e'_{nk}=e_{ij}\otimes e'_{(n+2-i)k}\quad \forall 2\le i\le n.
    \end{equation*}
    Here we take $e'_{(n+1)k}=e'_{1k}$ for convenience.
\end{proposition}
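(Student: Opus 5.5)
The plan is to compute the quotient $(M_E\otimes M^t_E)/R_E$ directly on the $\mathcal{O}_{E_v}$-basis $\{e_{ij}\otimes e'_{kl}\}$. The two ingredients are Corollary \ref{action of u corollary} and Corollary \ref{dual action of u corollary}. The generators of $R_E$ come in two families, which we treat separately. The first family is the $\mathcal{O}_{B_v}$-compatibility relations $i(\beta)u\otimes v-u\otimes i(\beta^*)v$. Since $\mathcal{O}_{B_v}$ is generated over $\mathcal{O}_{F_v}$ by $\mathcal{O}_{E_v}$ and $u$, it suffices to impose these for $\beta=x\in\mathcal{O}_{E_v}$ and for $\beta=u$, using $u^*=u$ and the formula for $x^*$ from Lemma \ref{explicit involution lemma}. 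The second family is the symmetry relations coming from $\lambda$.

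\textbf{Step 1 (the $\mathcal{O}_{E_v}$-relations).} Take $\beta=x\in\mathcal{O}_{E_v}$ and apply the two corollaries. On $e_{ij}\otimes e'_{kl}$, the relation reads
\[
(\text{eigenvalue of }x\text{ on }e_{ij}-\text{eigenvalue of }x^*\text{ on }e'_{kl})\cdot e_{ij}\otimes e'_{kl}\in R_E .
\]
Here the eigenvalue of $x$ on $e_{ij}$ is $\tau^{i-1}(x)$ or $\tau^{i-1}(\bar x)$, and the eigenvalue of $x^*$ on $e'_{kl}$ is some $\tau^{m}(x)$ or $\tau^{m}(\bar x)$. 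Because $E_v/F_v$ is unramified and generated by $\zeta$, two distinct embeddings $\sigma\neq\sigma'$ admit some $x$ with $\sigma(x)-\sigma'(x)\in\mathcal{O}_{E_v}^\times$; taking $x=\zeta$ works, since distinct roots of unity of order prime to $p$ differ by a unit. The signature twist $\bar{\cdot}$ is handled similarly: $v$ is split in $F/F^+$, so $F_v=F^+_v\times F^+_v$, and the conjugation swaps the two factors, giving idempotents. Consequently $e_{ij}\otimes e'_{kl}$ survives in the quotient only when the two characters coincide. Matching the $\tau$-exponent and the $\bar{\cdot}$-twist should force $k\equiv n+2-i \pmod n$ and $j,l$ of opposite signature, that is, $1\le j\le p<l\le r$ or the reverse. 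This is the step where the normalization of $H$ and the use of $\tau$ in Lemma \ref{explicit involution lemma} must be checked carefully, because an off-by-one error in the index $n+2-i$ is easy to make.

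\textbf{Step 2 (the $u$-relations).} Impose $u e_{ij}\otimes e'_{kl}=e_{ij}\otimes u e'_{kl}$ on the surviving vectors. By the corollaries, $u e_{ij}=e_{(i-1)j}$ for $i\ge2$ and $u e_{1j}=\pi e_{nj}$, and likewise for $e'$. Chasing $i$ through $2,\dots,n$ gives the chain of equalities $e_{2j}\otimes e'_{nk}=e_{ij}\otimes e'_{(n+2-i)k}$. At the wrap-around, where either $u e_{1j}$ or $u e'_{1k}$ picks up a factor of $\pi$, we get the single relation $e_{1j}\otimes e'_{1k}=\pi e_{2j}\otimes e'_{nk}$. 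Finally, the symmetry relations $\lambda^*(u^t)\otimes v-\lambda^*(v^t)\otimes u$ identify the $(j,k)$ and $(k,j)$ blocks, once they are expressed in the dual basis $e'$ under $\lambda$. We will verify that these are compatible with the previous relations rather than introducing further $\pi$-torsion.

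\textbf{Main obstacle.} The main obstacle is Step 1: correctly matching the characters of $\mathcal{O}_{E_v}$ on $M_E$ and $M^t_E$, including the $\bar{\cdot}$-twist. If this matching goes wrong, the index shift $n+2-i$, and with it the position of the unique $\pi$ in the final relation, would change. I would first test the whole computation in the case $n=2$ against the quaternionic computation of \cite{Yuan2}, using Example \ref{type C example}, before writing the general index chase.
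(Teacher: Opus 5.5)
Your plan is correct and is essentially the paper's proof. The $\mathcal{O}_{E_v}$-relations kill every $e_{ij}\otimes e'_{lk}$ whose two characters differ, leaving exactly those with $n\mid(i+l-2)$ and $j,k$ of opposite signature; the $u$-relations applied to $e_{ij}\otimes e'_{(n+3-i)k}$ then give the chain of equalities, with the single factor $\pi$ at the wrap-around.

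Your choice $x=\zeta$, together with the idempotents of $F\otimes_{F^+}F^+_w$, makes the eigenvalue differences units rather than merely nonzero, which sharpens the paper's ``choose $x$ with all conjugates distinct''. Just impose the relation on $M_E\otimes M^t_E$ using the action of Corollary \ref{dual action of u corollary} directly, since it already incorporates $*$. Note also that an $n=2$ check cannot detect a $\tau\leftrightarrow\tau^{-1}$ slip in the index, and that the symmetry relations are not needed, because the statement only asserts generation.
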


\begin{proof}
    The proof is directly from computation. Note that $R_E$ is generated by 
    \begin{equation*}
        (x\cdot e_{ij})\otimes e'_{lk}-e_{ij}\otimes (x\cdot e'_{lk}),\quad x\in\mathcal{O}_{E_v},\  1\le i,l\le n,\ 1\le j,k\le r,
    \end{equation*}
    and 
    \begin{equation*}
        (u\cdot e_{ij})\otimes e'_{lk}-e_{ij}\otimes (u\cdot e'_{lk}),\quad  \ 1\le i,l\le n,\ 1\le j,k\le r.
    \end{equation*}
    Choose one $x$ such that $x,\bar{x},\tau(x),\tau(\bar{x}),\cdots,\tau^{n-1}(x),\tau^{n-1}(\bar{x})$ are all distinct, apply the action of $x$ we know that $e_{ij}\otimes e'_{lk}\in R_E$ unless $n|(i+l-2)$ and $1\le j\le p<k\le r\ \mathrm{or}\ 1\le k\le p<j\le r$. Then we can apply the action of $u$ to $e_{ij}\otimes e'_{(n+3-i)k}$ to conclude this proposition.
\end{proof}

Finally, when $p=q=\frac{r}{2}$, we only need to tensor both sides of the map \eqref{phi_3} $n$ times and then take the determinant to obtain the first part of the statement of Theorem \ref{main2}. 

\subsubsection*{Two explicit examples}
Below we present two examples corresponding to the cases of type C and type A respectively. In both examples, $B$ is a quaternion algebra, and the corresponding Shimura variety is a curve.

\begin{example}[Quaternionic Shimura curve]\label{type C example}
    The setup is the same as \cite[Chap 1]{Yuan2}, i.e., $B$ is an indefinite quaternion algebra over $\QQ$ with $V=B$ and a positive involution $*$ of the first kind, so that $r=g=1$ and $n=2$. Under the same notations, when $B_v$ is non-split, suppose $M_E$ is generated by $x,y$ over $\mathcal{O}_{E_v}$, and the $\mathcal{O}_{B_v}$-module structure is given by
    \begin{equation*}
        u\cdot x=\pi y,\quad u\cdot y=x,\quad a\cdot x=ax,\quad a\cdot y=\tau(a)y,\quad \forall a\in\mathcal{O}_{E_v}.
    \end{equation*}
    Then a direct computation shows that $M^t_E$ generated by dual basis $x',y'$ has the same $\mathcal{O}_{B_v}$-module structure as $M$, which can be also found in \cite[Sec 3.2]{Yuan2}. Applying the same method, we conclude that $R_E$ is generated by
    \begin{equation*}
        x\otimes y',\quad y\otimes x',\quad x\otimes x'-\pi y\otimes y'.
    \end{equation*}
    Then the first part of the statement of Theorem \ref{main1} is clear.
\end{example}

\begin{example}[Base change of quaternionic Shimura curve]\label{type A example}
    The setup is similar to the Shimura curve $M_{K'}(\CC)$ in \cite[Sec 1.1.1]{Zhang}, i.e., we choose $B$ to be a quaternion algebra over an imaginary quadratic field $F$ with $V=B$ and a positive involution $*$ of the second kind, so that $g=1$ and $r=n=2$, and the signature is chosen to be $(1,1)$. Then the computation of this case is included in the last subsection.
\end{example}

Note that these two examples are closely related. In fact, following the discussion in \cite[Sec 1.1.1]{Zhang}, the second Shimura curve can be realized as a base change of quaternionic Shimura curve, and it is not hard to check the compatibility of these two results. 

\section{Comparison of the metrics}\label{sec metric}
In this section we explicitly compare two metrics in the complex setting. From now, we will work only over $\mathbb{C}$. Just like the previous section, we make several assumptions to simplify our later computations. Throughout this section, we keep assuming Assumption \ref{restrictive assumption}, and we also make the following assumption.
\begin{assumption}\label{assumption archimedean}
    \begin{enumerate}
        \item $g=1$, or equivalently $F^+=\QQ$.
        \item The PEL datum is of type A.
    \end{enumerate}
\end{assumption}
Indeed, for the first assumption, note that for general totally real field $F^+$, the computation works for each embedding $\sigma: F^+\hookrightarrow F\hookrightarrow\CC$, and it is trivial to combine the result at each archimedean place to conclude our main theorem. We also refer to \cite[Sec 4.3]{Guo1} to show that this assumption is harmless. However, it should be pointed out that there is a minor error in \cite[Chap 4]{Guo1}. The author did not take into account that the trace $\tr_{F/\QQ}$ is not self-dual over $\ZZ$, and also overlooked the effect of $d_F$ in the computation of the volume. We will provide the correct argument in our discussion. Nevertheless, this minor mistake does not affect the main result in \cite{Guo1}, since, as can be seen from our final comparison in Section \ref{final comparison}, these two contributions cancel each other out.

For the second assumption, the reason is the same as in the previous subsection when computing the image over the integral model: in the type C case, $B$ is always either a field or a quaternion algebra, and both of these cases are essentially covered in \cite[Sec 4.2, 4.3]{Guo1}. Therefore, for the sake of brevity, we do not repeat the computations here.

Recall the classification of irreducible Hermitian symmetric domains of PEL Shimura varieties in Proposition \ref{Classification of Hermitian symmetric domains}. Essentially, we only need to consider the following exact sequence
\begin{equation*}    
     0\longrightarrow\pi^*\Omega_{X/\mathbb{C}}\longrightarrow\Omega_{\mathcal{A}/\CC}  \longrightarrow\Omega_{\mathcal{A}/X}\longrightarrow 0.
\end{equation*}
Here $X$ is the Siegel upper half-space $\HH_r$ or the Hermitian upper half-space $\mathcal{H}_{\frac{r}{2},\frac{r}{2}}$, and $\pi:\mathcal{A}\rightarrow X$ denotes the universal abelian scheme. In order to compute the effect of the Kodaira--Spencer map on the metrics of line bundles, we need to find an explicit formula for the following morphism, which is a composition of the connecting morphism
\begin{equation}\label{com1}
\phi:\underline{\Omega}_{\mathcal{A}/X}\longrightarrow\Lie(\mathcal{A}/X)\otimes\Omega_{X/\mathbb{C}}.
\end{equation}

In this section, we first introduce an important lemma, which will be the main ingredient of our explicit computation. Then under some specific choice of basis, we give an explicit express of the connecting morphism \eqref{com1}. Finally, some simple calculation will finish the comparison of metrics. 

\subsection{Explicit map between complex tangent space} \label{Cech}
The goal of this subsection is to introduce a useful lemma, namely Lemma \ref{app}. The original discussion can be found in \cite[Sec 4.1]{Yuan2}, and the same material also appears in \cite{Guo1}. Since this part is crucial, we give a clear introduction here.

In order to compute $\phi$ explicitly, it is important to understand the following isomorphism
\begin{equation*}
    V\longrightarrow\Lie(A)\longrightarrow\Lie(A^t)\longrightarrow H^1(A,\mathcal{O}_A).
\end{equation*}
Here $V$ is a complex vector space, $A=V/\Lambda$ is an abelian variety, with a polarization $\lambda$ induced by a Riemann form $E:\Lambda\times\Lambda\longrightarrow\mathbb{Z}$ for $\Lambda$ a lattice in $V$. Then the first map comes from $\Lambda$ tensoring $\mathbb{R}$, the second is by the polarization, while the third map is the canonical isomorphism by deformation of line bundles, \cite[Remark 9.4(c)]{Mil}.

The main difficulty to give an explicit map of this isomorphism is the map to $H^1(A,\mathcal{O}_A)$.
We will use \v{C}ech cohomology to introduce an explicit canonical map
\begin{equation}\label{h map}
   h:\Hom_{\mathbb{Z}}(\Lambda,\mathbb{C})\longrightarrow H^1(A,\mathcal{O}_A),
\end{equation}
which is a special case for the explicit homomorphism
\begin{equation*}
   \delta:H^1(\Lambda,\mathcal{F}(V))\longrightarrow H^1(A,\mathcal{F}_A).
\end{equation*}
Here $\mathcal{F}$ is a sheaf in the complex analytic setting with trivial $\Lambda$-action.

We first need a suitable open cover of $A$ coming from cover of $V$. Take a set $\{U_t\}_{t\in I}$ that is a family of open subsets $U_t\subset V$, satisfying the following condition:
\begin{enumerate}[(1)]
    \item each composition $U_t\longrightarrow V\longrightarrow A$ is injective;
    \item $\bigcup_{t\in I}U_t\longrightarrow A$ is surjective;
    \item for any $t,t'\in I$, the difference of two sets $U_{t'}-U_t$, also as a subset of $V$, contains at most one point of $\Lambda$. Denote by $c_{t,t'}$ this point if it exists.
\end{enumerate}
Denote by $\bar{U}_t$ the image of $U_t\longrightarrow A$, which then forms a cover of $A$.
Obviously, such a cover exists just by taking each $U_t$ small enough. We call such a cover an admissible cover of $A$.

Now, we define $\delta$ explicitly. For any cross-homomorphism $\alpha:\Lambda\longrightarrow\mathcal{F}(V)$, define a \v{C}ech cocycle $\delta(\alpha)$
such that each component $\delta(\alpha)_{t,t'}\in\mathcal{F}(\bar{U}_{t,t'})$ on $\bar{U}_{t,t'}:=\bar{U}_t\cap\bar{U}_{t'}$ is given by the image of $\alpha(c_{t,t'})$ under
the composition $\mathcal{F}(V)\longrightarrow\mathcal{F}(U_t)\longrightarrow\mathcal{F}(\bar{U}_t)\longrightarrow\mathcal{F}(\bar{U}_{t,t'})$. This is obviously a \v{C}ech cocycle by the uniqueness
of points $c_{t,t'}$. Hence we define $\delta$ explicitly.

Back to the definition of $h$, one just chooses $\mathcal{F}=\mathcal{O}$, then define $h$ to be
the composition
\begin{equation}\label{composition}
   \Hom_{\mathbb{Z}}(\Lambda,\mathbb{C})\longrightarrow H^1(\Lambda,\mathbb{C})\longrightarrow H^1(\Lambda,\mathcal{O}(V))\longrightarrow H^1(A,\mathcal{O}_A).
\end{equation}
Here, the first isomorphism follows from $\Lambda$ acting on $\mathbb{C}$ trivially, and the second
map is induced by natural map $\mathbb{C}\longrightarrow\mathcal{O}(V)$.

Finally, we have the following lemma.
\begin{lemma}\label{app}
The composition
\begin{equation*}
    V\longrightarrow\Lie(A)\longrightarrow\Lie(A^t)\longrightarrow H^1(A,\mathcal{O}_A)
\end{equation*}
is given by
\begin{equation*}
    z\mapsto2\pi i h(E(z,\cdot)),
\end{equation*}
where $E(z,\cdot)$ is viewed as an element in $\Hom_{\mathbb{Z}}(\Lambda,\mathbb{C})$.
\end{lemma}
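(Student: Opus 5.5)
We compute each of the three maps explicitly and compose them. The first map $V\to\Lie(A)$ is the tautological identification of the tangent space at the origin of $A=V/\Lambda$ with $V$: a vector $z$ gives the translation-invariant vector field $\partial_z$. The second map $\Lie(A)\to\Lie(A^t)$ is the differential of $\lambda$. We use the Appell--Humbert description $A^t=\bar V^\vee/\Lambda^\vee$, where $\bar V^\vee$ is the space of $\CC$-antilinear functionals on $V$ and $\Lambda^\vee=\{\ell: \Img\,\ell(\Lambda)\subset\ZZ\}$. With $H$ the Hermitian form satisfying $\Img H=E$, the polarization is $z\mapsto H(\cdot,z)$, up to the convention for conjugate-linearity. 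So $d\lambda(z)$ is the antilinear functional $H(\cdot,z)$. Its imaginary part, restricted to $\Lambda$, is $E(\cdot,z)$, which is $\pm E(z,\cdot)$.

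The third map is the crux. The isomorphism $\Lie(A^t)\to H^1(A,\mathcal{O}_A)$ sends a tangent vector of $\Pic^0$ at the trivial bundle to the first-order deformation of $\mathcal{O}_A$. Concretely, the curve $t\mapsto t\,\ell$ in $\bar V^\vee$ gives line bundles $L_{t\ell}$. These are quotients of $V\times\CC$ by the factor of automorphy $c\mapsto\chi_{t\ell}(c)$, a unitary character of $\Lambda$, with $\chi_\ell(c)=e^{2\pi i\,\Img\ell(c)}$ in the normalization matching $\Lambda^\vee$. In the admissible cover $\{\bar U_t\}$, the line bundle $L_{t\ell}$ is trivialized on each $\bar U_t$ by its lift $U_t$. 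The transition function on $\bar U_{t,t'}$ is the constant $\chi_{t\ell}(c_{t,t'})$.

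We then differentiate at $t=0$. A first-order deformation of $\mathcal{O}_A$ with transition functions $1+\epsilon g_{t,t'}$ corresponds to the \v{C}ech class $[g_{t,t'}]\in H^1(A,\mathcal{O}_A)$, and here
\begin{equation*}
g_{t,t'}=\frac{d}{dt}\Big|_{t=0}e^{2\pi i t\,\Img\ell(c_{t,t'})}=2\pi i\,\Img\ell(c_{t,t'}).
\end{equation*}
By the definition of $\delta$, this cocycle is exactly $2\pi i\,h(\Img\ell|_\Lambda)$. Substituting $\ell=d\lambda(z)$ gives $2\pi i\,h(E(z,\cdot))$, provided the signs work out.

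The main obstacle is sign and normalization bookkeeping. Three conventions must all be fixed consistently: the direction of the Appell--Humbert isomorphism ($\ell\mapsto$ character versus its inverse), the antilinear slot in $H$, and the sign convention of the deformation-theoretic isomorphism in \cite[Remark 9.4(c)]{Mil} together with the \v{C}ech connecting map. I would fix these by checking the case of an elliptic curve $\CC/(\ZZ+\tau\ZZ)$ with $E(1,\tau)=1$ against \cite[Sec 4.1]{Yuan2}, and then choose the remaining conventions so that the general formula reads $z\mapsto 2\pi i\,h(E(z,\cdot))$. Finally, because $h$ factors through the composition \eqref{composition}, I would check that the \v{C}ech class obtained is independent of the admissible cover. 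This follows from the uniqueness of the points $c_{t,t'}$ and refinement.
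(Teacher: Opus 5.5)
Your route (Appell--Humbert, then differentiate a family of factors of automorphy and read off the \v{C}ech cocycle through $\delta$) is the one the paper points to. The paper gives no proof of its own; it defers to Yuan's Lemma 4.1 with Appell--Humbert as the key input. However, the central step is not justified as written.

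For complex $s$ the unitary factors $e^{2\pi i\,\Img(s\ell(c))}$ have absolute value $1$, so they are not holomorphic in $s$. Hence $1+2\pi i\epsilon\,\Img\ell(c)$ is not, a priori, the $\CC[\epsilon]$-point attached to the holomorphic tangent vector $\ell$. You can see something is missing: differentiating along $t\mapsto t(i\ell)$ gives $2\pi i\,\Re\ell|_\Lambda$. Nothing in your argument shows this equals $i\cdot 2\pi i\,\Img\ell|_\Lambda$ in $H^1(A,\mathcal{O}_A)$, i.e.\ that your map is even $\CC$-linear.

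The missing fact is that $h$ kills restrictions of $\CC$-linear functionals. If $f\in\Hom_\CC(V,\CC)$, then $f(c)=f(v+c)-f(v)$ is a coboundary in $H^1(\Lambda,\mathcal{O}(V))$. With this, the argument closes cleanly if you work holomorphically:
\begin{itemize}
\item Take $L=L(H,\alpha)$ with $H(x,y)=E(ix,y)+iE(x,y)$, which is linear in $x$. Then $t_{sz}^*L\otimes L^{-1}$ has factor $e^{\pi sH(z,c)}$, holomorphic in $s$, so the deformation class is $h(\pi H(z,\cdot))$.
\item Since $\pi H(z,c)-2\pi iE(z,c)=\pi\overline{H(z,c)}=\pi H(c,z)$ is $\CC$-linear in $c$, you get $h(\pi H(z,\cdot))=2\pi i\,h(E(z,\cdot))$.
\item Your unitary family is obtained from this one by the gauge $v\mapsto e^{-\pi\bar sH(v,z)}$. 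That gauge is exactly what legitimizes your real-parameter derivative.
\end{itemize}

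The sign should not be left as ``$\pm$'' and then fixed by choosing conventions so the formula comes out right. That assumes the conclusion, and calibrating against Yuan's Section 4.1 imports the very statement being proved. The polarization is $x\mapsto t_x^*L\otimes L^{-1}$ for the $L$ with positive definite $H$, which is forced by $E(ix,x)>0$. The same gauge computation gives $t_x^*L(H,\alpha)\cong L(H,\alpha\,e^{2\pi iE(x,\cdot)})$. So the functional is $H(z,\cdot)$, and its imaginary part on $\Lambda$ is $+E(z,\cdot)$.

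The only genuinely conventional sign is the direction of \v{C}ech transition functions. It must be taken compatibly with $\delta$, where $c_{t,t'}\in U_{t'}-U_t$. A section with $f(v+c)=e_c(v)f(v)$ satisfies $f_{t'}=e_{c_{t,t'}}f_t$ on overlaps, which matches $\delta(\alpha)_{t,t'}=\alpha(c_{t,t'})$.
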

We refer \cite[Lem 4.1]{Yuan2} for a complete proof of this lemma. The key point of the proof is Appel--Humbert theorem which can be found in \cite{Mum}.

\subsection{Explicit connecting morphism}\label{Explicit connecting morphism}
In this subsection we give an explicit expression of \eqref{com1}. According to Lemma \ref{app}, the key point is to provide an explicit description of the universal abelian scheme $\pi:\mathcal{A}\rightarrow X$, as well as the Riemann form $E$. Thus, we first give an explicit description by choosing a basis and coordinates, and then compute this morphism in terms of these coordinates.

\subsubsection*{Universal abelian scheme}
Recall the moduli interpretation of $\mathcal{X}_K$ in Section \ref{Moduli interpretation and integral model}. According to the $\mathcal{O}_B$-structure, the universal abelian scheme  $\pi:\mathcal{A}\rightarrow\mathcal{H}_{\frac{r}{2},\frac{r}{2}}$ is given by
\begin{equation*}
   \mathcal{O}_{B}^\frac{r}{n}\backslash(\mathcal{H}_{\frac{r}{2},\frac{r}{2}}\times\mathbb{C}^{nr}).
\end{equation*}
Here we remind the reader once again that, since we assume $B$ is a division algebra, $n|r$. In order to describe the action of $\mathcal{O}_{B}^\frac{r}{n}$, we make the following conventions. Let $(\beta_1,\cdots,\beta_\frac{r}{n})$ be an element of $\mathcal{O}_B^\frac{r}{n}$, $Z\in\mathcal{H}_{\frac{r}{2},\frac{r}{2}}$ is a matrix in $\M_{\frac{r}{2}}(\CC)$, and $(z_{ij})_{1\le i\le n, 1\le j\le r}$ is a matrix in $\M_{n,r}(\CC)$ representing a vector in $\CC^{nr}$. Denote by
\begin{equation*}
    \matrices{Z}{I},\quad \matrices{Z^t}{I}
\end{equation*}
two matrices in $\M_{r,\frac{r}{2}}(\CC)$, where $I=I_\frac{r}{2}$ is the $\frac{r}{2}\times\frac{r}{2}$ identity matrix. Also note that there is an identification $\sigma: B_\CC\rightarrow\M_n(\CC)$, and we denote its complex conjugation by $\bar{\sigma}: B_\CC\rightarrow\M_n(\CC)$. Then the action of $\mathcal{O}_{B}^\frac{r}{n}$ is given explicitly by
\begin{equation*}
   (\beta_1,\cdots,\beta_\frac{r}{n})\cdot(Z,(z_{ij}))=(Z,(z_{ij})+\big(\sigma(\beta_1,\cdots,\beta_\frac{r}{n})\cdot \matrices{Z}{I},\bar{\sigma}(\beta_1,\cdots,\beta_\frac{r}{n})\cdot \matrices{Z^t}{I}\big)).
\end{equation*}
For convenience, we sometimes use $\beta$ to denote $(\beta_1,\cdots,\beta_\frac{r}{n})$, $\sigma(\beta)=(\beta_{ij})_{1\le i\le n, 1\le j\le r}$, and $\zeta$ to denote $(z_{ij})_{1\le i\le n, 1\le j\le r}$, then we can simply write
\begin{equation*}
    \beta\cdot(Z,\zeta)=(Z,\zeta+\big(\sigma(\beta)\cdot \matrices{Z}{I},\bar{\sigma}(\beta)\cdot \matrices{Z^t}{I}\big)).
\end{equation*}

Let's briefly explain this explicit formula of $\mathcal{O}_{B}^\frac{r}{n}$-action. Note that for each $\CC$-point $Z\in\mathcal{H}_{\frac{r}{2},\frac{r}{2}}$, the fiber $A=\mathcal{A}_Z\subset\mathcal{A}$ is an abelian variety over $\CC$, which comes from a quadruple $(A,\lambda,i,\bar{\eta})$ defined in moduli interpretation with $S=\mathrm{Spec}\CC$. Moreover, we have a canonical uniformization
\begin{equation}\label{canonical uniformization}
   \mathcal{A}_Z(\CC)=\mathbb{C}^{nr}/\Lambda_Z,\quad H_1(\mathcal{A}_Z,\ZZ)\cong\Lambda_Z=\big(\sigma(\mathcal{O}_B^\frac{r}{n})\cdot \matrices{Z}{I},\bar{\sigma}(\mathcal{O}_B^\frac{r}{n})\cdot \matrices{Z^t}{I}\big).
\end{equation}
Note that since $Y=\frac{1}{2i}(Z-Z^*)>0$, which implies $\det\matrixx{Z}{Z^*}{I}{I}\ne 0$, the map 
\begin{equation*}
    \mathcal{O}_B^\frac{r}{n}\rightarrow\big(\sigma(\mathcal{O}_B^\frac{r}{n})\cdot \matrices{Z}{I},\bar{\sigma}(\mathcal{O}_B^\frac{r}{n})\cdot \matrices{Z^t}{I}\big)
\end{equation*}
is injective, hence $\Lambda_Z$ is indeed a full lattice of $\CC^{nr}$. Note that the reason why the action of $\OBr$ on the first $\frac{r}{2}$ columns is via $\sigma$, while on the last $\frac{r}{2}$ columns it is via its conjugate, is due to the signature in this case. Under such uniformization, any $\beta\in\mathcal{O}_B^\frac{r}{n}$ also represents a coordinate of some point in lattice. Moreover, the action of $\mathcal{O}_B^\frac{r}{n}$ on $\mathcal{H}_{\frac{r}{2},\frac{r}{2}}$ is clearly trivial since the action must be fiber-wise. As for the action of $\mathcal{O}_B^\frac{r}{n}$ on $\Hr$, we only need to verify this action on each fiber $\mathcal{A}_Z$, which can be seen from the definition of canonical uniformization.

\begin{remark}\label{bounded universal abelian scheme remark}
    Note that here we have constructed the universal abelian scheme for the unbounded realization $\Hr$ of the Hermitian symmetric domain. For completeness, we also present the construction in the bounded realization $\mathcal{D}_{p,q}$, and note that this construction works for arbitrary signatures. Most of the construction remains unchanged, except that in this case $Z\in\mathcal{D}_{p,q}$ is a matrix in $\M_{p,q}(\CC)$, and we replace matrices $\matrices{Z}{I},\ \matrices{Z^t}{I}\in\M_{r,\frac{r}{2}}(\CC)$ by
    \begin{equation*}
        \matrices{Z}{I_q}\in\M_{r,q}(\CC),\quad \matrices{I_p}{Z^t}\in\M_{r,p}(\CC).
    \end{equation*}
    Then the condition $I_q-Z^*Z>0$ ensures that $\Lambda_Z$ in this case remains a full lattice. This construction can be checked in the same way.
\end{remark}

Under the above setup, we now give an explicit expression for the Riemann form on $\Lambda_Z$. We first consider the case when $B=F$, or more generally, we allow here that $B=\M_n(F)$ and $\mathcal{O}_B=\M_n(\mathcal{O}_F)$. In this case, we claim that there is a positive Riemann form over $\Lambda_Z$
\begin{equation}\label{Riemann form}
   E:\Lambda_Z\times\Lambda_Z\longrightarrow\mathbb{Z},\quad E(\beta,\beta')=\tr_{F/\QQ}(\beta\matrixx{0}{-I}{I}{0}\bar{\beta}'^t),
\end{equation}
where in $E(\beta,\beta')$, $\beta$ and $\beta'$ denote the coordinates of element in $\Lambda_Z$ under the uniformization above. Indeed, it is not hard to check that $E$ is alternating and $\RR$-bilinear, and the associated Hermitian form
\begin{equation*}
    H(\beta,\beta')=E(i\cdot\beta,\beta')+iE(\beta,\beta')
\end{equation*}
is positive definite. Note that here $i\cdot \beta$ does not denote the coordinate $i\cdot\beta$ of $\Lambda_Z$, but denotes the element
\begin{equation*}
    i\cdot\big(\sigma(\beta)\cdot \matrices{Z}{I},\bar{\sigma}(\beta)\cdot \matrices{Z^t}{I}\big)\in\Lambda_Z.
\end{equation*}

On the one hand, it is important to realize that this Riemann form $E$ over $\Lambda_Z$ is \textit{not} self-dual, since the discriminant $d_F\ne 1$. In fact, the polarization induced by this $E$
\begin{equation*}
    \lambda:\mathcal{A}_Z\rightarrow\mathcal{A}_Z^t
\end{equation*}
satisfies $\deg(\lambda)=c_\lambda=d_F$, where $c_\lambda$ is the constant defined in Remark \ref{polarization remark}. On the other hand, if we wish to modify $E$ to make it self-dual, this is equivalent to choose a suitable matrix $\mu\in\M_n(\CC)$ such that $\det\mu=d_F$, and the Riemann form is defined as
\begin{equation*}
    E_\mu(\beta,\beta')=\tr_{F/\QQ}(\mu^{-1}\cdot\beta\matrixx{0}{-I}{I}{0}\bar{\beta}'^t).
\end{equation*}

For a general $B$, it is not difficult to deduce from the above split case that a self-dual Riemann form can be written as
\begin{equation}\label{general Riemann form}
    E_B(\beta,\beta')=\tr_{F/\QQ}(\mu^{-1}\cdot\sigma(\beta)\matrixx{0}{-I}{I}{0}\bar{\sigma}(\beta')^t),
\end{equation}
such that $\mu\in\M_n(\CC)$ and $\det\mu^n=\Disc_{\mathcal{O}_B/\ZZ}$. Indeed, it is important to note that, as mentioned in the proof of Lemma \ref{explicit involution lemma}, a general positive involution on $B$ is always a conjugate of the standard positive involution. In particular, in the type A case, according to \cite[Thm 2 (201)]{Mum}, under the given identification $\sigma$ the positive involution $*$ is of the form
\begin{equation*}
    x^*=a\cdot \bar{x}^t\cdot a^{-1},\quad \forall x\in B,
\end{equation*}
such that $a=\bar{a}^t\in\mathcal{O}_B$ and $a$ is Hermitian positive definite. Note that a more intuitive explanation is that
\begin{equation*}
    \det\mu^r=\vol(M_{n,r}(\CC)/\sigma(\OBr)),
\end{equation*}
which will be used in the final comparison.

Finally, we would like to remind the reader that, although the construction in the type A case differs from that in the type C case, they are essentially the very similar. For example, in the case of quaternionic Shimura curves over $\QQ$, our matrix $\mu$ is essentially the same as the element $\mu$ in quaternion algebra in \cite{Yuan2}, and the extra negative signature in $\mu^2=-d_B$ there comes from our matrix $\matrixx{0}{-I}{I}{0}$; in the case of Siegel modular varieties, our matrix $\matrixx{0}{-I}{I}{0}$ corresponds to the construction in \cite[Sec 4.2]{Guo1}.

\subsubsection*{Explicit connecting morphism}
Keeping all the above notations and definitions in place, we denote by $\{\de z_{ij}\}_{1\le i\le n, 1\le j\le r}$ a basis of locally free sheaves $\Omega_{\mathcal{A}/\Hr}$ and $\underline{\Omega}_{\mathcal{A}/\Hr}$. Then we have
\begin{equation*}
    \underline{\omega}_{\mathcal{A}/\Hr}=\mathcal{O}_{\Hr}\bigwedge_{1\le i\le n, 1\le j\le r} \de z_{ij},\quad \Lie(\mathcal{A}/\Hr)=\bigoplus_{1\le i\le n, 1\le j\le r}\mathcal{O}_{\Hr}\frac{\partial}{\partial z_{ij}}.
\end{equation*}
We also denote
\begin{equation*}
    \Omega_{\Hr/\CC}=\bigoplus_{1\le i,j\le \frac{r}{2}}\mathcal{O}_{\Hr}\de Z_{ij},\quad \omega_{\Hr/\CC}=\mathcal{O}_{\Hr}\bigwedge_{1\le i,j\le \frac{r}{2}} \de Z_{ij}=\mathcal{O}_{\Hr}\de \tau,
\end{equation*}
where the notation $\de Z_{ij}$ and $\de \tau$ are introduced in the definition of Petersson metric.

We now state the main result of this subsection. Note that this result is an analogue of \cite[Thm 4.2]{Yuan2} and \cite[Thm 4.2, 4.3]{Guo1} in the type A case, and the proof is similar.
\begin{theorem}\label{Explicit archimedean}
The connecting morphism \eqref{com1} over Hermitian upper half-space
\begin{equation*}
    \phi:\underline{\Omega}_{\mathcal{A}/\Hr}\longrightarrow\Lie(\mathcal{A}/\Hr)\otimes\Omega_{\Hr/\mathbb{C}}
\end{equation*}
gives
\begin{equation*}
    \de z_{ij}\mapsto \left\{
    \begin{aligned}
        \nonumber
        &\sum_{k=1}^\frac{r}{2}\sum_{l=1}^n\frac{\mu_{li}}{2\pi i}\cdot\frac{\partial}{\partial z_{l(k+\frac{r}{2})}}\otimes\de Z_{kj}|_{Z^0}, \ \ \  (1\le j\le \frac{r}{2})\\
        &\sum_{k=1}^\frac{r}{2}\sum_{l=1}^n\frac{\mu_{li}}{2\pi i}\cdot\frac{\partial}{\partial z_{lk}}\otimes\de Z_{(j-\frac{r}{2})k}|_{Z^0}. \ \ \  (\frac{r}{2}+1\le j \le r)
    \end{aligned}
    \right. .
\end{equation*}
Therefore, the map
\begin{equation*}
    \psi:\underline{\omega}_\mathcal{A}^{\otimes \frac{r}{2}}\longrightarrow\omega^n_{\Hr/\mathbb{C}}
\end{equation*}
induced by $\phi$ gives
\begin{equation*}
    (\bigwedge_{1\le i\le n, 1\le j\le r} \de z_{ij})^{\otimes \frac{r}{2}}\mapsto\big(\frac{\det\mu}{(2\pi i)^n}\big)^\frac{r^2}{4}(\de\tau).
\end{equation*}
\end{theorem}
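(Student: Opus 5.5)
We follow the strategy of \cite[Thm 4.2]{Yuan2} and \cite[Thm 4.2, 4.3]{Guo1}, working at a fixed point $Z^0\in\Hr$. We first make the connecting morphism concrete through \v{C}ech cohomology. Choose an admissible cover $\{U_t\}$ of the fiber $\mathcal{A}_{Z^0}$, and thicken it to open sets $\wt U_t\subset\Hr\times\CC^{nr}$ near $Z^0\times U_t$. Over each $\wt U_t$ we lift $\de z_{ij}\in\Omega_{\mathcal{A}/\Hr}$ to the global form $\de z_{ij}\in\Omega_{\mathcal{A}/\CC}$ written in the local coordinates. On an overlap, two lifts differ by the pullback of the translation by a lattice vector $c_{t,t'}$. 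That translation depends on $Z$ through the uniformization \eqref{canonical uniformization}, so the difference of the two lifts is $\de$ of the $(i,j)$-coordinate of $c_{t,t'}(Z)$. This gives a \v{C}ech $1$-cocycle with values in $\pi^*\Omega_{\Hr/\CC}$, namely
\begin{equation*}
    \beta\longmapsto \de\Big(\big(\sigma(\beta)\matrices{Z}{I},\ \bar{\sigma}(\beta)\matrices{Z^t}{I}\big)_{ij}\Big),
\end{equation*}
where $\beta\in\OBr$ runs over the lattice coordinates. In the first $\frac r2$ columns this is $\sum_k\sigma(\beta)_{ik}\,\de Z_{kj}$, and in the last $\frac r2$ columns it is $\sum_k\bar\sigma(\beta)_{ik}\,\de Z_{(j-\frac r2)k}$. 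The cocycle is exactly $\delta$ applied to a homomorphism $\Lambda\to\CC\otimes\Omega_{\Hr,Z^0}$, so $\phi(\de z_{ij})=h(\alpha_{ij})$ for the $\ZZ$-linear map $\alpha_{ij}$ just described.

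Next we invert Lemma \ref{app}, using the self-dual Riemann form $E_B$ of \eqref{general Riemann form}. We need a tangent vector $w\in\CC^{nr}$ with $2\pi i\,E_B(w,\cdot)=\alpha_{ij}$ as functionals on $\Lambda_{Z^0}$. For $\beta'$ in the lattice, $E_B(\cdot,\beta')$ has the shape
\begin{equation*}
    \tr\Big(\mu^{-1}\sigma(\cdot)\matrixx{0}{-I}{I}{0}\bar{\sigma}(\beta')^t\Big).
\end{equation*}
The plan is to express $w$ by the coordinates of its $\RR$-linear preimage in $B_\RR^{r/n}$. Then we match the holomorphic linear functional $\alpha_{ij}$ against $E_B(w,\cdot)$, using the fact that a $\CC$-antilinear piece pairs trivially after projecting to $H^1(A,\mathcal{O}_A)$. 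The factor $\mu^{-1}$ inverts to $\mu$, giving the coefficients $\mu_{li}$. The matrix $\matrixx{0}{-I}{I}{0}$ swaps the two column blocks, which explains why $\partial/\partial z_{l(k+\frac r2)}$ pairs with $\de Z_{kj}$ for $j\le\frac r2$, and why $\partial/\partial z_{lk}$ pairs with $\de Z_{(j-\frac r2)k}$ for $j>\frac r2$. Dividing by $2\pi i$ then produces the displayed formula for $\phi$.

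Finally, we compute the determinant. Through the constructions of Steps~2 and~3 of Section \ref{sec KS}, $\phi$ induces $\psi$. Concretely, we tensor $\phi$ over the $\frac r2\times\frac r2$ signature blocks, project onto $(M\otimes M^t)/R$ with its basis from Lemma \ref{algebraic lemma} and Proposition \ref{explicit basis of R}, and take the determinant. Each of the $\frac{r^2}{4}$ coordinates $\de Z_{kj}$ receives a factor coming from the $n\times n$ matrix $(\mu_{li}/2\pi i)$, whose determinant is $\det\mu/(2\pi i)^n$. This yields
\begin{equation*}
    \Big(\frac{\det\mu}{(2\pi i)^n}\Big)^{\frac{r^2}{4}}\de\tau,
\end{equation*}
up to a sign fixed by the ordering of the wedge products.

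The main obstacle is the second step. The difficulty is bookkeeping the identification of $H^1(A,\mathcal{O}_A)$ with holomorphic (as opposed to merely $\RR$-linear) functionals on $\Lambda$, given that the lattice is embedded using $\sigma$ on one block and $\bar\sigma$ on the other. Getting the signature swap and the placement of $\mu$ versus $\mu^t$ right requires care. I would first verify the case $n=1$, $r=2$, where everything is scalar, and then extend it blockwise.
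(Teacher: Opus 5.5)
Your overall route is the paper's: the same \v{C}ech computation giving the cocycle $\sum_k\sigma(\beta)_{ik}\,\de Z_{kj}$ (resp.\ $\sum_k\bar\sigma(\beta)_{ik}\,\de Z_{(j-\frac r2)k}$), the same inversion of Lemma~\ref{app} (the paper also reduces to $E$ and then sets $w=\mu w'$), and the same determinant step via $\phi'$. The gap is in Step~2, exactly where you locate the difficulty, and there your mechanism is backwards. The kernel of $h:\Hom_\ZZ(\Lambda,\CC)\to H^1(A,\mathcal{O}_A)$ consists of the $\CC$-linear functionals, not the antilinear ones. Indeed, if $\ell$ is $\CC$-linear on $V$, the cocycle $\ell(c_{t,t'})$ is the \v{C}ech coboundary of the holomorphic functions $\ell|_{U_t}$; equivalently, $H^0(A,\Omega^1)$ is the kernel of $H^1(A,\CC)\to H^1(A,\mathcal{O}_A)$. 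So $H^1(A,\mathcal{O}_A)$ is the space of antilinear functionals (the paper's ``$\CC$-semilinear homomorphisms''), and you must match the \emph{antilinear} parts of $\alpha_{ij}$ and $2\pi i\,E_B(w,\cdot)$. Relatedly, $\alpha_{ij}$ is not a $\CC$-linear functional on $\Lambda\otimes\RR=\CC^{nr}$. If it were, $h(\alpha_{ij})=0$ and $\phi$ would vanish.

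Your own test case shows what goes wrong. Take $n=1$, $r=2$, $Z=x+iy$. A lattice vector is $\lambda=(\beta_1Z+\beta_2,\ \bar\beta_1Z+\bar\beta_2)$, so $\beta_1=(\lambda_1-\bar\lambda_2)/(2iy)$. In these coordinates the form \eqref{Riemann form} becomes $E(w,\lambda)=-y^{-1}\Img(w_1\bar\lambda_1+w_2\bar\lambda_2)$, with no cross terms between the two blocks.
\begin{itemize}
\item Matching antilinear parts of $\beta_1$ and $2\pi i\,E(w,\cdot)$ gives $w=(2\pi i)^{-1}e_{12}$. This is the paper's formula, and the factors of $y$ cancel.
\item Matching the $\CC$-linear parts, as you propose, gives $w=-(2\pi i)^{-1}e_{11}$. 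This has no block swap and a conjugated constant.
\end{itemize}
The example also shows that the swap does not come from $J$, since $E$ is block-diagonal in lattice coordinates. It comes from the $\sigma/\bar\sigma$ shape of $\Lambda_Z$: for $k\le\frac r2$, the antilinear part of $\sigma(\beta)_{ik}$ is $-(\bar\lambda_2(2iY)^{-1})_{ik}$, which involves only the second block. The actual content of Step~2 is to verify that this $Y^{-1}$ cancels against the $Y^{-1}$ appearing in $E_B$ written in lattice coordinates. As written, your Step~2 reads the answer off the statement instead of deriving it.
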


\begin{proof}
To begin with, we prove the first statement about $\phi$. Fix $Z^0\in\Hr$, we claim that on the fiber above $Z^0$, the connecting map
\begin{equation*}
    \phi_0:\underline{\Omega}_{\mathcal{A}/\Hr}\longrightarrow R^1\pi_*\mathcal{O}_\mathcal{A}\otimes\Omega_{\Hr/\mathbb{C}}
\end{equation*}
is given by
\begin{equation*}
    \de z_{ij}|_{Z^0}\mapsto \left\{
    \begin{aligned}
        \nonumber
        &\sum_{k=1}^\frac{r}{2}h(\delta_{ik})\otimes\de Z_{kj}|_{Z^0}, \ \ \  (1\le j\le \frac{r}{2})\\
        &\sum_{k=1}^\frac{r}{2}h(\bar{\delta}_{ik})\otimes\de Z_{(j-\frac{r}{2})k}|_{Z^0}. \ \ \  (\frac{r}{2}+1\le j \le r)
    \end{aligned}
    \right. ,
\end{equation*}
where
\begin{equation*}
    h:\Hom_\mathbb{Z}(\Lambda_{Z^0},\mathbb{C})\longrightarrow H^1(\mathcal{A}_{Z^0},\mathcal{O}_{\mathcal{A}_{Z^0}})
\end{equation*}
is the map defined in \ref{Cech}, and $\delta_{ik}\in\Hom_\mathbb{Z}(\Lambda_{Z^0},\mathbb{C})$ (resp. $\bar{\delta}_{ik}$) sends $\beta^0\in\Lambda_{Z^0}$ (as a coordinate) to $\beta_{ik}^0$ (resp. $\bar{\beta}_{ik}^0$), i.e., the $ik$-element (resp. conjugation of the $ik$-element) of $\sigma(\beta^0)\in\M_{n,r}(\CC)$.

To prove this claim, we again use \v{C}ech cohomology as before. Similar to the case of a single complex abelian variety, since $\mathcal{A}$ has a universal cover $\Hr\times\mathbb{C}^{nr}$, we can take a family of open subsets $\{U_t\}_{t\in I}$ satisfying the same kind of admissible condition as above. This provides us with an admissible cover $\{\bar{U}_t\}_{t\in I}$ of $\mathcal{A}$. Note that conditions (1) and (2) are obvious, while condition (3) becomes \\
(3) for any $t,t'\in I$, the difference $U_t-U_{t'}:=\{(Z,\zeta'-\zeta):(Z,\zeta)\in U_t,(Z,\zeta')\in U_{t'}\}$, where $\zeta,\zeta'\in\mathbb{C}^{nr}$, intersects at most one connected component of $\Lambda:=\bigcup_Z\Lambda_Z$. If this component does exist, denote it by $\beta_{t,t'}\in\mathcal{O}_B^\frac{r}{n}$. 

Here we explain this condition (3) for better understanding. When $Z\in\Hr$ varies, the lattice $\Lambda_Z$ above $Z$ also varies, and each point in the lattice becomes a hypersurface as $Z$ takes all points in $\Hr$. This makes our universal lattice $\Lambda$ consist of countably many connecting components indexed by $\mathcal{O}_B^\frac{r}{n}$, hence we are able to use such $\beta$ to denote them.

Now we compute the image of $\de z_{ij}$. Recall our exact sequence
\begin{equation*}   
    0\longrightarrow\pi^*\Omega_{\Hr/\mathbb{C}}\longrightarrow\Omega_{\mathcal{A}/
    \mathbb{C}}\longrightarrow\Omega_{\mathcal{A}/\Hr}\longrightarrow 0.
\end{equation*}
Here is a standard method of diagram chasing to compute the connecting map. Note first that each $\de z_{ij}\in\Omega_{\mathcal{A}/\mathbb{C}}(U_t)$ lifts the corresponding section, which is also denoted by $\de z_{ij}$ as a section of $\pi_*\Omega_{\mathcal{A}/\Hr}$. Denote by $(\de z_j)_t$ the pushforward of $\de z_{ij}$ via $U_t\longrightarrow\bar{U}_t$, then it is sufficient to compute $(\de z_{ij})_{t'}-(\de z_{ij})_t$ on the overlap $\bar{U}_{t,t'}$, which by the admissible condition is bijective to $U_{t,t'}$. 

Recall the definition of $\mathcal{A}$, the isomorphism $U_{t,t'}\cong U_{t',t}$ sends $(Z,\zeta)$ to $(Z,\zeta+\big(\sigma(\beta_{t,t'})\cdot \matrices{Z}{I},\bar{\sigma}(\beta_{t,t'})\cdot \matrices{Z^t}{I}\big))$. Thus the pull-back of $z_{ij}\in\mathcal{O}(U_{t',t})$ to $\mathcal{O}(U_{t,t'})$ becomes
\begin{equation*}
    z'_{ij}=\left\{
    \begin{aligned}
        \nonumber
        &z_{ij}+\sum_{k=1}^\frac{r}{2}\beta_{ik}Z_{kj}+\beta_{i(\frac{r}{2}+j)}, \ \ \  (1\le j\le \frac{r}{2})\\
        &z_{ij}+\sum_{k=1}^\frac{r}{2}\bar{\beta}_{ik}Z_{(j-\frac{r}{2})k}+\bar{\beta}_{ij}. \ \ \  (\frac{r}{2}+1\le j \le r)
    \end{aligned}
    \right. 
\end{equation*}
One concludes that the pull-back of $(\de z_{ij})_{t'}-(\de z_{ij})_t$ is 
\begin{equation}\label{local check}
    \de z'_{ij}-\de z_{ij}=\left\{
    \begin{aligned}
        \nonumber
        &\sum_{k=1}^\frac{r}{2}\beta_{ik}\de Z_{kj}, \ \ \  (1\le j\le \frac{r}{2})\\
        &\sum_{k=1}^\frac{r}{2}\bar{\beta}_{ik}\de Z_{(j-\frac{r}{2})k}. \ \ \  (\frac{r}{2}+1\le j \le r)
    \end{aligned}
    \right. 
\end{equation}
Thus this implies our claim above immediately. We briefly explain the argument here. Note that $h(\delta_{ik})\in H^1(\mathcal{A}_{Z^0},\mathcal{O}_{\mathcal{A}_{Z^0}})$ is represented by a \v{C}ech cocycle, i.e., it is composed of local sections $h(\delta_{ik})_{t,t'}\in\mathcal{O}_{\mathcal{A}_{Z^0}}(\bar{U}_{t,t'})$ given by $\delta_{ik}(\beta_{t,t'})$ for any $t,t'\in I$. By our definition, each local section $\delta_{ik}(\beta_{t,t'})$ is simply the $ik$-term of $\beta_{t,t'}$, which is an element in $\CC$ represented by this coordinate $\beta_{t,t'}$ under the composition \eqref{composition}. The same discussion is valid for $h(\bar{\delta}_{ik})_{t,t'}$. Thus, the expression of $\de z'_{ij}-\de z_{ij}$ is equivalent to our claim when $t,t'\in I$ vary.

Now, to complete our proof about the first statement of this theorem, it remains to apply the Lemma \ref{app} above to translate $h(\delta_{ik})$ and $h(\bar{\delta}_{ik})$ into elements in $\Lie(\mathcal{A}_{Z^0})$. We consider $h(\delta_{ik})$ first. In other words, since we have already chosen a basis of $\Lie(\mathcal{A}/\Hr)$, we need to find some $w_{ik}\in\mathbb{C}^{nr}\cong\M_{n,r}(\CC)\cong\Lie(\mathcal{A}_{Z^0})$, such that 
\begin{equation*}
    h(\delta_{ik})=(2\pi i)h(E_B(w_{ik},\cdot)).
\end{equation*}
. It suffices to observe that if there exists $w'_{ik}$ such that the condition is satisfied when $E_B=E$ as \eqref{Riemann form}, then in the general case we only need to take 
\begin{equation*}
    w_{ik}=\mu\cdot w'_{ik},
\end{equation*}
since exchanging the order of matrix multiplication does not affect taking the trace. 

Now we assume $E_B=E$. Note that the main difference from the proof in the type C case is that, in the type A case, we cannot expect 
\begin{equation*}
    \delta_{ik}(\beta^0)=\beta_{ik}^0=(2\pi i)E(w_{ik},\beta^0)
\end{equation*}
for any $\beta^0$. Indeed, Riemann form takes value only in $\RR$, while in the type A case all matrices are over $\CC$. To overcome this problem, note that the real dimension of $\Hom_\ZZ(\Lambda,\CC)$ is always twice of the real dimension of $H^1(A,\mathcal{O}_A)$, and according to the interpretation of $H^1(A,\mathcal{O}_A)$ in \cite[Sec 4.1]{Yuan2}, it consists of $\CC$-semilinear homomorphisms from $\Lambda$ to $\CC$. Moreover, apply the expression of \eqref{Riemann form}, a direct computation shows that for 
\begin{equation*}
    W_{ik}=(2\pi i)^{-1}e_{i(k+\frac{r}{2})}\cdot\Lambda_Z=(2\pi i)^{-1}e_{ik}+(2\pi i)^{-1}e_{i(k+\frac{r}{2})}\in\Lambda_Z,
\end{equation*}
we have
\begin{equation*}
    \tr_{F/\QQ}\beta_{ik}^0=(2\pi i)E(W_{ik},\beta^0).
\end{equation*}
Here $e$ denotes the elementary matrix. Combining these observations, we see that $w_{ik}$ should be taken to be $(2\pi i)^{-1}e_{i(k+\frac{r}{2})}\in\M_{n,r}(\CC)$.
 
Similarly, for $h(\bar{\delta}_{ik})$, the same discussion implies $w_{ik}=(2\pi i)^{-1}e_{ik}$. Thus, we conclude that
\begin{equation*}
    w_{ik}=\left\{
    \begin{aligned}
        \nonumber
        &(2\pi i)^{-1}\mu\cdot e_{i(k+\frac{r}{2})}, \ \ \  (1\le j\le \frac{r}{2})\\
        &(2\pi i)^{-1}\mu\cdot e_{ik}. \ \ \  (\frac{r}{2}+1\le j \le r)
    \end{aligned}
    \right. .
\end{equation*}
This clearly implies our first result.

To prove the second result, note that If we move the $\Lie(\mathcal{A}/\Hr)$ term to the left-hand side of the map $\phi$ via duality and apply the first result, we even have a simpler formula for morphism
\begin{equation*}
    \phi':\underline{\Omega}_{\mathcal{A}/\mathcal{H}_r}^{\otimes 2}\longrightarrow\Omega_{\mathcal{H}_r/\mathbb{C}},
\end{equation*}
i.e., this map is given by
\begin{equation*}
    \de z_{ij}\otimes\de z_{l(k+\frac{r}{2})}\mapsto\frac{\mu_{li}}{2\pi i}\de Z_{kj},
\end{equation*}
\begin{equation*}
    \de z_{i(j+\frac{r}{2})}\otimes\de z_{lk}\mapsto\frac{\mu_{li}}{2\pi i}\de Z_{jk},
\end{equation*}
\begin{equation*}
    \de z_{ij}\otimes\de z_{lk}\mapsto 0,
\end{equation*}
for any $1\le i,l\le n$ and $1\le j,k\le \frac{r}{2}$. Take the exterior product as $i,j,k,l$ varies, this finishes the proof.
\end{proof}

Note that we can also see from the expression of the morphism $\phi'$ that a canonical morphism between some power of two line bundles $\WA$ and $\omega_{\Hr/\CC}$ can be constructed only when $p=q=\frac{r}{2}$.

\subsection{Comparison of metrics}\label{final comparison}
Finally, we are able to prove the second statements about metrics in our main Theorems \ref{main1} and \ref{main2}. Since everything is compatible with pull-back, we only need to compare metric in the complex setting. 

Similar to the proof of Theorem \ref{Explicit archimedean}, it is sufficient to fix some $Z^0\in\Hr$ and check on the fiber above $Z^0$. By definition, we have
\begin{equation*}
    \lVert\bigwedge_{1\le i\le n, 1\le j\le r} \de z_{ij}\rVert_{\Fal}^2(Z^0)=\frac{1}{(2\pi)^{nr}}|\int_{\mathcal{A}_{Z^0}}\bigwedge_{ij}(\de z_{ij}\wedge\de \bar{z}_{ij})|=\frac{1}{\pi^{nr}}\vol(\mathbb{C}^{nr}/\Lambda_{Z^0}).
\end{equation*}
We claim that 
\begin{equation*}
    \vol(\mathbb{C}^{nr}/\Lambda_{Z^0})=\det\mu^r\cdot\det(Y^0)^{2n}.
\end{equation*}
In fact, we actually have
\begin{equation*}
    \vol(\mathbb{C}^{nr}/\Lambda_{Z^0})=\det(Y^0)^{2n}\cdot\vol(\M_{n,r}(\CC)/\sigma(\OBr)).
\end{equation*}
The appearance of the term $\det(Y^0)^{2n}$ comes from the definition of the lattice $\Lambda_{Z^0}$ and some elementary linear algebra, and the reader may consult \cite[Sec 4.4]{Guo1} for details, which we do not repeat here. While $\vol(\M_{n,r}(\CC)/\sigma(\OBr))$ matches $\det\mu^r$ is almost by definition, since we choose $\mu$ to make the Riemann form $E_B$ self-dual, and the volume of quotient by lattice satisfies
\begin{equation*}
    \vol(\M_{n,r}(\CC)/\Lambda)\vol(\M_{n,r}(\CC)/\Lambda^\vee)=1,
\end{equation*}
where $\Lambda^\vee$ is the dual lattice taken over $\ZZ$. Thus, let $Z_0$ varies, we conclude that
\begin{equation*}
    \lVert\wedge_{i}\de z_i\rVert_{\Fal}^\frac{r}{2}=\big(\frac{\det\mu}{\pi^n}\big)^\frac{r^2}{4}\cdot\det(Y)^\frac{rn}{2}.
\end{equation*}
Moreover, by definition we have 
\begin{equation*}
    \lVert\de\tau\rVert^n_{\Pet}:=2^\frac{r^2n}{4}\det(Y)^{\frac{rn}{2}}.
\end{equation*}
Combining with the result in Theorem \ref{Explicit archimedean}, we finish the comparison.

\

\noindent \small{School of mathematical sciences, Peking University, Beijing 100871, China}

\noindent \small{\it Email: ziqiguo0603@pku.edu.cn}

\end{document}